\documentclass[12pt,reqno,hidelinks]{amsart}
\usepackage{amssymb}
\usepackage{amsmath, amssymb, verbatim, url, mathrsfs}
\usepackage{mathrsfs}
\usepackage{mathtools}
\usepackage{verbatim}
\usepackage{amsthm}
\usepackage{framed}
\usepackage{cite}
\usepackage{wasysym}
\usepackage{upgreek}
\usepackage{color}
\usepackage[dvipsnames]{xcolor}
\usepackage{tensor}
\usepackage{accents}
\usepackage{dsfont}
\usepackage{hyperref}
\usepackage{enumerate}
\usepackage[normalem]{ulem}
\usepackage{longtable}
\usepackage{mathtools}

\numberwithin{equation}{section}

\newtheorem{proposition}{Proposition}[section]
\newtheorem{lemma}[proposition]{Lemma}
\newtheorem{corollary}[proposition]{Corollary}
\newtheorem{theorem}[proposition]{Theorem}

\theoremstyle{definition}
\newtheorem{definition}[proposition]{Definition}
\newtheorem{remark}[proposition]{Remark}

\newcommand{\vertiiii}[1]{{\left\vert\kern-0.25ex\left\vert\kern-0.25ex\left\vert\kern-0.25ex\left\vert #1 \right\vert\kern-0.25ex\right\vert\kern-0.25ex\right\vert\kern-0.25ex\right\vert}}
\newcommand{\vertiii}[1]{{\left\vert\kern-0.25ex\left\vert\kern-0.25ex\left\vert #1 \right\vert\kern-0.25ex\right\vert\kern-0.25ex\right\vert}}

\newcommand{\ks}{\mathfrak{s}}
\newcommand{\kss}{\mathfrak{S}}
\newcommand{\ke}{\mathfrak{e}}
\newcommand{\kb}{\mathfrak{b}}

\newcommand{\rrho}{\rho}

\newcommand{\rw}{\mathring{w}}

\newcommand{\rp}{p}
\newcommand{\rPhi}{\Phi}

\newcommand{\bx}{\mathbf{x}}
\newcommand{\by}{\mathbf{y}}

\newcommand{\br}{\mathbf{r}}

\newcommand{\uW}{\underline{W}}
\newcommand{\uTheta}{\underline{\Theta}}
\newcommand{\uXi}{\underline{\Xi}}
\newcommand{\uOmega}{\underline{\Omega}}

\newcommand{\Rbb}{\mathbb{R}}
\newcommand{\Zbb}{\mathbb{Z}}

\newcommand{\dq}{\tilde{q}}

\newcommand{\hU}{\underaccent{\tilde}{U}}
\newcommand{\hV}{\underaccent{\tilde}{V}}

\newcommand{\hy}{\tilde{Y}}

\newcommand{\del}[1]{{\partial_{#1}}}

\newcommand{\AND}{{\quad\text{and}\quad}}

\newcommand{\Li}{L^\infty}

\newcommand{\starcup}{$\sqcup$\kern-0.58em{$\star$}}




\newcommand{\longeq}{\scalebox{3}[1]{=}}

\let\origmaketitle\maketitle
\def\maketitle{
	\begingroup
	\def\uppercasenonmath##1{} 
	\let\MakeUppercase\relax 
	\origmaketitle
	\endgroup
}

\allowdisplaybreaks

\DeclareMathOperator{\supp}{supp}

\setlength{\hoffset}{-20mm}
\setlength{\voffset}{-17mm}

\setlength{\textwidth}{17cm}
\setlength{\textheight}{23.5cm}%

\setlength{\marginparwidth}{25mm}%

\begin{document}

\title[Blowups and longtime developments of irregularly-shaped molecular clouds]{Blowups and longtime developments with near-boundary mass accretions of irregularly-shaped Euler--Poisson dominated molecular clouds in astrophysics}

\address{Center for Mathematical Sciences, Huazhong University of Science and Technology,
1037 Luoyu Road, Wuhan, Hubei Province, China; Beijing International Center for Mathematical Research (BICMR), Peking University, No.5 Yiheyuan Road Haidian District, Beijing, China. }
\email{chao.liu.math@foxmail.com}
\author{Chao Liu}

\begin{abstract}
Motivated by the astrophysical problems of star formations from molecular clouds, we make the first step on the possible long time behaviors of certain irregularly-shaped molecular clouds.
We emphasis the \textit{main difficulty} of the blowups of the irregular-shaped fluids  with vacuum (molecular clouds) comes from the initial irregular configurations of its density (multiple centers of gravity). This inevitably causes more complicated movements during the evolution than the one with spherical symmetry. The spherical symmetric case has been well studied. However, for the non-spherical symmetric case with the gravity, it is very rare in the references due to a very complicate nonlinear interaction between the gravity and the fluids.
This article concludes, under the \textit{admissible data} (i.e., large scale, irregularly-shaped, expanding and rotational molecular clouds), the developments of the solution (molecular clouds) are either global (the first class) with near-boundary
mass accretions (lead to star formations), or blowup at finite time.
In addition, certain singularities can be removed from the boundary if the data is strongly admissible. This paper partially answers Makino's conjecture \cite{Makino1992} in $1992$ on the finite time blowup of any tame solution without symmetries for some data. The model of the molecular clouds and the local wellposedness have been established in the companion article \cite{Liu2021d}. 

\vspace{2mm}

{{\bf Keywords:} Euler--Poisson systems; tame solutions; Makino variable; Makino solutions; blowups; quasilinear symmetric hyperbolic systems; self-gravitating systems}

\vspace{2mm}

{{\bf Mathematics Subject Classification:} Primary 35Q31, 35A01; Secondary 35L02, 85A30}
\end{abstract}

\maketitle


\section{Introduction} \label{S:INTRO}

Astrophysics, both observational and theoretical (see, e.g., \cite{Larsona,LeBlanc2010,Ward-Thompson2011}), indicates that stars are formed or ``born'' in a truly \textit{vast cold  interstellar gas clouds} (or the \textit{giant molecular clouds}). Intuitively, since the molecular clouds are enormous, usually
containing up to a million solar masses of gas, the self-gravity is huge and may cause the interstellar
gas to condense into new stars. Therefore, \textit{mathematically} it is can be simplified as a special blowup problem (mass accretions) of the density of perfect fluids (molecular clouds) with vacuum under Newtonian self-gravity. The model of the molecular clouds and the local wellposedness have been established in the companion article \cite{Liu2021d}, and this paper \textit{aims} to study the \textit{possible blowups} and find out the \textit{mass accretion due to the free-fall boundary} (or called diffuse boundary since it models the diffuse clouds). The mass accretions due to the interior structures of the self-gravitational fluids (i.e. the linearized and nonlinear Jeans instability) will be given in our proceeding papers based on different methods (see our recent paper \cite{Liu2022} for a slightly nonlinear Jeans instability).

Standard astrophysical theory suggests there are several famous stages for the formations of stars from the interstellar gas clouds although the detailed processes are very model dependent. Most theoretical results are based on very ideal and restricted conditions such as the spherical symmetric clouds, disks, fixed boundary, equilibrium state or gases with uniform density, and then using linearized or numerical analysis. For example, the most famous \textit{Jeans instability} (see, for instance, \cite{LeBlanc2010,Tomisaka2012,Ward-Thompson2011}) gives Jeans mass and radius limits and criterion for the collapse of the extended, uniform (density) and static clouds in the earliest stage of the fragmentation. However, in practice, such ideal molecular clouds have never been observed because of the strong and unreal assumptions of the shape, linearity and density distributions.

In reality, the situation, according to the observations by astronomers (see the references above), is much more complex and very far from the uniform-density gas clouds envisaged by Jeans. Inhomogeneities give rise to more than one center
of attraction, and molecular clouds are generally irregularly shaped (usually they are comprised of dense molecular cloud cores and filaments structures, see \cite[\S $4.3.3$]{Ward-Thompson2011}), and do not at all resemble equilibrium configurations, see \cite{Larsona} and will be
rotating. We emphasize that rotations must interfere with the gravitational collapse due to the fictitious inertia centrifugal force in a rotating frame of reference conflicts with the gravity. In addition, interstellar magnetic fields may deflect particles and obstruct their attraction towards the center. Therefore, in order to understand the star formation, it is worth understanding the evolution of molecular clouds \textit{without above ideal assumptions} in the first place.

Although the evolution of giant molecular clouds is a key
ingredient for understanding of the star formation and there are numerous observational, numerical and theoretical works on this topic in \textit{astrophysical literature}, the main formation mechanics and the origin of their physical conditions still remains uncertain and is an unsolved
problem to date due to their extreme complexity (too flexible to capture the main properties) and the outstanding discrepancy between the observations and the theoretical models. In the meanwhile, to the best of our knowledge there is no literature \textit{in mathematics} to rigorously model and analyze the fully nonlinear evolution of giant molecular clouds (it has been pointed out by Rendall \cite[\S $7.2$]{Rendall2002} that there are no
results on Jeans instability available for the fully nonlinear case).  We, in this article, attempt to make the first step on the nonlinear analysis of the large, expanding and irregularly-shaped self-gravitational molecular clouds and give possibilities of the star formation (mass accretions), fragmentation of the molecular clouds and other singularities due to the diffuse boundary.

\subsection{The diffuse boundary problem of molecular clouds}
\label{s:1.1}

Let us begin with stating the following model of molecular clouds.

$(i)$ \underline{Variables:} The molecular clouds are characterized by following variables: We use functions $\rrho:[0,T)\times \Rbb^3\rightarrow \Rbb_{\geq 0}$, $ p:[0,T)\times \Rbb^3\rightarrow \Rbb_{\geq 0}$  and $\rPhi:[0,T)\times\Rbb^3 \rightarrow \Rbb$, for some constant $T>0$, to describe the distribution of the mass density, pressure of the fluids and the Newtonian potential of gas clouds, respectively, and denote $\Omega(t):=\supp\rrho(t,\cdot) =\{\bx\in \Rbb^3\;|\;\rrho(t,\bx)>0\}\subset \Rbb^3$ is the changing volume occupied by the gas at time $t$. Then the vacuum is identified by $\rrho(t,\bx)=0$ in $\Omega^\mathsf{c}(t)$ and the fluids by $\rrho(t,\bx)>0$ in $\Omega(t)$. We focus on the \textit{isentropic ideal} gas throughout this article, that is, the \textit{equation of state} bridging the pressure $\rp$ and the density $\rrho$ of the gas cloud is given by
\begin{align}\label{e:eos1}
 p= K  \rrho^{\gamma}  \quad \text{for} \quad \bx\in \Rbb^3
\end{align}
where $\gamma>1$ and $K\in \Rbb_{>0}$ are both given constants. The velocity of fluids is only defined on $\Omega(t)$ and denoted by  $\mathring{\mathbf{w}}:=(\mathring{w}^i):[0,T)\times \Omega(t)\rightarrow \Rbb^3$ (note that there is no definition in $\Omega^\mathsf{c}(t)$).

We call $\Omega(t)$ the \textit{hydrodynamic region} and $\Omega^\mathsf{c}(t)$ the \textit{ballistic region} (see \cite{Liu2021d}). In ballistic region, rarefied gas dynamics prevails and rarefied gas means there is no mass accretion, which, therefore, does not interest us due to the motivations of star formations. However, in the hydrodynamic region $\Omega(t)$, the regime of hydrodynamics applies, and in order to clarify the evolution of the hydrodynamic region of clouds, we propose the following Euler--Poisson system with the diffuse boundary to frame out the hydrodynamic region and characterize this system. Later it turns out this system is well-posed and we emphasize again that we are interested in the potion of the hydrodynamic region of molecular clouds.

$(ii)$ \underline{Euler--Poisson equations} determine the developments of molecular clouds in $\Omega(t)$ and the Newtonian potential on $\Rbb^3$, that is,
\begin{align}
\del{0} \rrho + \mathring{w}^i \del{i} \rrho+ \rrho \del{i} \mathring{w}^i = & 0  \quad  &&\text{in}\quad\Omega(t),  \label{e:NEul1} \\
\rrho \del{0} \mathring{w}^k + \rrho \mathring{w}^i \del{i} \mathring{w}^k + \delta^{ik} \del{i}  p = & -   \rrho \del{}^k  \Phi &&\text{in}\quad \Omega(t),  \label{e:NEul2} \\
\Delta  \Phi =  &  \rrho,
 &&\text{in}\quad \Rbb^3,  \label{e:NEul3}
\end{align}
for $t\in [0,T)$. The Newtonian potential $ \Phi$ is given by
\begin{equation}\label{e:Newpot}
 \Phi(t,\bx)=-\frac{1}{4\pi}\int_{\Omega(t)}				 \frac{\rrho(t,\by)}{|\bx-\by|}d^3 \by \quad \text{for} \quad  (t,\bx)\in [0,T) \times \Rbb^3,
\end{equation}

$(iii)$ \underline{Initial data} are prescribed by
\begin{align}\label{e:inidata}
\rrho(0,\bx)=\rrho_0(\bx) \quad \text{for} \quad \bx\in \Rbb^3 \AND \rw^k(0,\bx)= \rw^k_0(\bx) \quad \text{for} \quad \bx\in \Omega(0).
\end{align}
We assume $\Omega(0)\subset \Rbb^3$ is a precompact set throughout this article.

$(iv)$ \underline{Diffuse boundary} is defined by
\begin{equation}\label{e:ffbdry0}
   \lim_{(t, \bx)\rightarrow (\check{t},\check{\bx}) }\bigl( \rrho^{-1} (t,\bx)\del{i} \rp  (t,\bx)\bigr) =0,
\end{equation}
for any $(t, \bx) \in [0,T)\times \Omega(t)$ and $(\check{t},\check{\bx}) \in [0,T)\times \partial\Omega(t)$ where $\partial \Omega(t)$ is the boundary of the volume $\Omega(t)$, that is, the moving interface between the cloud and the exterior vacuum.
This boundary condition characterizes the gradually vanishing fluid pressure per mass near the boundary.

$(v)$ \underline{Classical solution:}
We define classical solutions of the diffuse boundary problem $(i)$--$(iv)$:
\begin{definition}\label{t:clsl}
	A set of functions $(\rrho,\rw^i,\rPhi,\Omega(t))$ is called \textit{a classical solution} to above problem $(i)$--$(iv)$ on $t\in[0,T)$ for $T>0$ if it solves the system $(i)$--$(iv)$ and satisfies the following conditions:
	\begin{enumerate}
		\item \label{D:1} there is a \textit{$C^1$-extension} $w^i$ of $\rw^i$ to the boundary, i.e., there exists 
$w^i\in C^1([0,T)\times \overline{\Omega(t)},\Rbb^3)$, such that $w^i(t,\bx)=\rw^i(t,\bx)$ for $(t,\bx)\in[0,T)\times \Omega(t)$;
		\item \label{D:2}  $\rrho\in C^1([0,T)\times \Rbb^3,\Rbb)$ and $\rPhi$ is given by \eqref{e:Newpot}.
	\end{enumerate}
\end{definition}

\begin{remark}\label{t:diff} 
	We emphasis the \textit{main difficulty} of the blowups of the irregular-shaped fluids  with vacuum (molecular clouds) comes from the initial irregular configurations of its density (multiple centers of gravity). This inevitably causes far more complicated movements during the evolution than the one with spherical symmetry. The spherical symmetric case has been well studied. However, for the non-spherical symmetric case with the gravity, it is very rare in the references due to a very complicate nonlinear interaction between the gravity and the fluids.
\end{remark}

\subsection{Notation and convention}

\subsubsection{Vectors and components} \label{s:vecnota}
We will use, unless otherwise stated, boldface, e.g., $\bx$, $\by$, $\mathbf{w}$ for Latins and the Greeks, e.g., $\xi$, to denote vectors and normal font with indices, e.g., $x^i$, $y^i$, $\xi^i$ and $w^i$, to denote the components of vectors (we also use these components to express the vector if it clear from the context).

\subsubsection{Indices and summation convention}\label{iandc}
Throughout this article, unless stated otherwise, we adopt the notation system in general relativity (i.e. \textit{Einstein notation}, see \cite{Wald2010} for more). In specific, we use upper indices to represent components of vectors,
lower indices to represent components of covectors, and use lower case Latin letters, e.g. $i, j,k$, for spatial indices that run from $1$ to $3$, and lower case Greek letters, e.g. $\alpha, \beta, \gamma$, for spacetime indices
that run from $0$ to $3$ (we also denote time coordinate $t$ by $x^0:=t$). We use \textit{Einstein summation convention} as well. That is, when an index variable (i.e. dummy index) appears twice, once in an upper superscript and once in a lower subscript, in a single term and is not otherwise defined, it implies summation of that term over all the values of the index. For example, for a summation $z=\sum^3_{i=1} x^i y_i$, we simply use Einstein notation to denote $z= x^i y_i$. In addition, we raise and lower indices by the Euclidean metric $\delta^{ij}$ and $\delta_{ij}$, that is, for example,
\begin{equation*}
	W^{ij}:=W_{lk}\delta^{li}\delta^{kj}, \quad W_{ij}:=W^{lk}\delta_{li}\delta_{kj} \AND W^{ij}:=W^i_k\delta^{kj}.
\end{equation*}

\subsubsection{Lagrangian descriptions}\label{s:lag}
Suppose a field $f:[0,T)\times \Omega(t) \rightarrow V$ (or a property $f$ in Eulerian description) and the flow $\chi:[0,T)\times \Omega(0)\rightarrow \Omega(t)\subset \Rbb^3$ generated by a vector field  $\mathbf{w}:=(w^i)$, such that  $\chi(t,\xi)=\bx \in \Omega(t)$ for every  $(t,\xi)\in[0,T)\times \Omega(0)$ where $T>0$ is a constant, $\Omega(t) \subset \Rbb^3$ is a domain depending on $t$ and $V \subset \Rbb^n$ for some $n \in \Zbb_{\geq 1}$, we denote
\begin{equation}\label{e:undf}
	\underline{f}(t,\xi):=f(t,\chi(t,\xi))=f(t,\bx)
\end{equation}
describing the property $\underline{f}$ of the parcel labeled by the initial position $\xi$ at time $t$ (the property $\underline{f}$ along the flow, i.e., in Lagrangian description). Using this notation, we have
\begin{equation*}
	\del{t}\underline{f}(t,\xi)=\underline{D_t f  }(t,\xi)
\end{equation*}
where $D_t$ is the \textit{material derivative} (i.e. $D_t:=\del{t}+w^i\del{i}$, see, for instance, \cite{Chorin1993}).
According to the definition of $L^\infty$, if $\chi(t,\Omega(0))=\Omega(t)$, we conclude that
\begin{equation*}
	\|f(t)\|_{\Li(\Omega(t))}=\|\underline{f}(t)\|_{\Li(\Omega(0))}.
\end{equation*}

\subsubsection{Matrices}\label{s:mtrx}
For any matrices $A$ and $B\in \mathbb{M}_{n\times n}$, we define
\begin{equation*}
	A\leq B \quad \Longleftrightarrow \quad \xi^T A \xi \leq \xi^T B \xi
\end{equation*}
for any column vector $\xi\in \Rbb^n$ and $\xi^T$ is its transpose. We can similarly define $A<(>,\geq,\lesssim,\gtrsim)B$.

\subsection{Preliminary  concepts}\label{s:defs}
In order to state the main theorem concisely, we first introduce a few definitions.
\subsubsection{Decomposition of velocities}Let us first define
$z$ and $X$ related to the radial component of velocity and its perpendicular component, respectively,
\begin{align}
	z(t,\xi):=& \delta_{ij}  \frac{\chi^j(t,\xi)}{|\chi(t,\xi)|} \underline{w}^i(t, \xi)  \label{e:zdef}
	\intertext{and}
	X(t,\xi) := &  \sqrt{\delta_{jk} \underline{w}^j(t, \xi ) \underline{w}^k(t, \xi ) - z^2(t, \xi )}. \label{e:Xdef}
\end{align}
For simplicity of notations, we denote
\begin{equation*}\label{e:defW}
	W^i_j(t,\bx):=w^i_{,j}(t,\bx):=\begin{cases}
		\del{j}\rw^i(t,\bx) \quad & \text{if}\quad (t,\bx)\in[0,T)\times \Omega(t)\\
		\lim_{(t^\prime,\bx^\prime)\rightarrow(t,\bx)}\del{j}\rw^i(t^\prime,\bx^\prime)\quad & \text{if}\quad (t,\bx)\in[0,T)\times \del{}\Omega(t)
	\end{cases}.
\end{equation*}
Let us lower the index of $W^i_j$ by $\delta_{ki}$, i.e., $W_{jk}(t,\bx):=  \delta_{ki}W^i_j(t,\bx)$ and decompose
\begin{equation}\label{e:WThOm0}
	W_{jk}(t,\bx)
	=   \frac{1}{2}\bigl(W_{jk}(t,\bx)+W_{kj}(t,\bx)\bigr)+\frac{1}{2}\bigl(W_{jk}(t,\bx)-W_{kj}(t,\bx)\bigr) 
	=   \Theta_{jk}(t,\bx)-\Omega_{jk}(t,\bx)
\end{equation}
where $\Theta_{jk}$ is the symmetric deformation component ($\Theta_{jk}=\Theta_{kj}$) of $W_{jk}$ and $\Omega_{jk}$ the antisymmetric rotation component ($\Omega_{jk}=-\Omega_{kj}$) defined by
\begin{align}
	\Theta_{jk}(t,\bx):=& \frac{1}{2}\bigl(W_{jk}(t,\bx)+W_{kj}(t,\bx)\bigr) \label{e:WThOm0a}\\
	\Omega_{jk}(t,\bx):=& \frac{1}{2}\bigl(W_{kj}(t,\bx)-W_{jk}(t,\bx)\bigr). \label{e:WThOm0b}
\end{align}
Then we have the identities,
\begin{align}\label{e:WThOm1}
	W_{kj}=\Theta_{jk}+\Omega_{jk} \AND W_{jk}=\Theta_{jk}-\Omega_{jk}.
\end{align}
We also denote the divergence of the velocity
\begin{equation}\label{e:theta}
	\Theta(t,\bx):=\delta^{jk}\Theta_{jk}(t,\bx)=\delta^{jk}W_{jk}(t,\bx)= w^j_{,j}(t,\bx).
\end{equation}

\subsubsection{Mass, energy, moment of inertia and virial}

Let us first define the total mass, energy, moment of inertia and virial of the molecular clouds.
Firstly, $ M $ is the \textit{total mass} given by
\begin{equation}\label{e:mass}
	M (t)= \int_{\Rbb^3} \rrho d^3\bx,
\end{equation}
the \textit{total energy} is defined by
\begin{align}\label{e:eng0}
    E (t) = & \int_{\Rbb^3} \biggl( \frac{1}{2} \rrho |\mathring{w}^i|^2+\frac{p}{\gamma-1}  +\frac{1}{2}  \rrho \Phi \biggr)d^3\bx
	=   \int_{\Rbb^3} \biggl( \frac{1}{2}  \rrho |\mathring{w}^i|^2+\frac{ K\rrho^\gamma }{\gamma-1} +\frac{1}{2}  \rrho \Phi \biggr)d^3\bx,
\end{align}
the \textit{moment of inertia} and  \textit{virial}\footnote{See \eqref{e:dtH} for the reason of the notation $H^\prime(t)$. } are given, respectively, by
\begin{align}\label{e:xi0}
	H (t):=\frac{1}{2}\int_{\Rbb^3}\rrho(t,\bx) |\bx|^2 d^3 \bx \AND
	H^\prime (t):=\int_{\Omega(t)}    \rrho  \mathring{w}^i x^j  \delta_{ij} d^3\bx.
\end{align}
\begin{remark}
	By the conservations of the energy and mass, see Lemma \ref{t:ctoms} in \S\ref{s:consv}, the energy $E(t)$ and mass $M(t)$ are conserved. Thus, we denote $E:=E(t)=E(0)$ and $M:=M(t)=M(0)$ throughout this article. 	
\end{remark}

\subsubsection{The regular distributions and first class of global solutions}
The regular behavior near the  boundary (i.e., $\mathrm{R}^{b}([0,T)\times \Omega(t),G_b)$) describes a type of the behavior of density near the boundary $\{t\}\times \del{}\Omega(t)$. Using this concept, by excluding a class of global solutions (i.e., the first class of global solutions below), intuitively, we can make sure the gravity of clouds near the boundary satisfies the \textit{law of inverse square}, $\sim 1/|\bx|^2$ (i.e., if $\rrho\in \mathrm{R}^{1}([0,T)\times\Omega(t),G_1)$ given below) in the certain sense, which is crucial in the proof of the main theorem in \S\ref{s:bdyevo}, and the tidal force near the boundary satisfies $\sim 1/|\bx|^3$ (i.e., if $\rrho\in \mathrm{R}^{0}([0,T)\times\Omega(t),G_0)$).
Next, we first introduce the regular distribution of the density and then the first class of global solutions.
\begin{definition}\label{t:diffdis}
	Suppose $b=0$ or $1$ is a constant, $G_b>0$ and $T\in(0,\infty]$ are given constants and $\Omega(t)$ are given sets for every $t\in[ 0, T)$,
	a function $\rrho: [0, T) \times \Omega(t)\rightarrow \Rbb$ is called \textit{regular near the boundary with parameter $(b, G_b)$} and denoted by $\rrho\in \mathrm{R}^b([0,T)\times \Omega(t),G_b)$, if there are constants  $\delta\in(0,1)$, such that 	
	\begin{equation*}
		G_{b}> \Bigl(\frac{1}{2(b+1)  \delta^{3-b}} +   
		\frac{3(2-b) }{4}\Bigr)	\frac{M}{\pi},
	\end{equation*}
and
	for any  $t\in[0, T)$, every vector $\bx \in\del{}\Omega(t)$ and $\mathbf{r}\in B(\mathbf{n},\delta)$ where  $\mathbf{n}:=\bx/|\bx|$,
\begin{align}\label{e:regdef0}
	\rrho(t, |\bx|\mathbf{r})<\frac{3M |\mathbf{n}-\mathbf{r}|^{1-b}}{4\pi \delta |\bx|^{3}}.
\end{align}
\end{definition}

\begin{definition}\label{t:12class}
	Let $(\rrho,\rw^i,\Omega(t))$ is a global solution to the diffuse boundary problem of Euler--Poisson equations \eqref{e:NEul1}--\eqref{e:ffbdry0} for $t\in [0,\infty) $. If $\rrho \notin   \mathrm{R}^1([0,\infty)\times\Omega(t),G_1)$, then we call this global solution \textit{the first class of global solution with parameter $G_1$}. More specifically, for any constants $\delta\in(0,1)$, such that 	
	\begin{equation*}
		G_{1}> \Bigl( \frac{1}{ \delta^{2}} + 3\Bigr) \frac{M}{4\pi},
	\end{equation*}
	there is at least a time $T_\star\in[0,\infty)$, vectors $\bx \in\del{}\Omega(t)$ and $\mathbf{r}_\star\in B(\mathbf{n},\delta)$ where  $\mathbf{n}:=\bx/|\bx|$, such that
	\begin{align}\label{e:rhobg}
		\rrho(T_\star, |\bx|\mathbf{r}_\star) \geq \frac{3M }{4\pi\delta|\bx|^{3}} .
	\end{align}

If we denote
\begin{equation*}
	\bar{\rho}(t):=\frac{M}{\frac{4}{3}\pi(\sup_{\bx\in\del{}\Omega(t)}|\bx|)^3} \AND \varrho(t,\bx):=\frac{\rho(t,\bx)}{\bar{\rho}(t)}
\end{equation*}
the averaging density of  clouds and relative density, respectively. Then \eqref{e:rhobg} implies
\begin{align}\label{e:rhobg2}
	\varrho(T_\star, |\bx|\mathbf{r}_\star) \geq \frac{1}{\delta}>1,
\end{align}
we call this phenomenon as the \textit{near-boundary mass accretion}.  
\end{definition}
\begin{remark}
	We call the phenomenon \eqref{e:rhobg2} the near-boundary mass accretion since it characterizes the \textit{locally focusing effects of density near the boundary}. In astrophysics, if $G_1$ is large enough, then $\delta$ can become extremely small, then the relative density locally (in a local ball) near the boundary becomes extremely large by \eqref{e:rhobg2}. This means the fluids are accumulating locally near the boundary and this exactly describes the \textit{formations of protostars}. 
\end{remark}

\begin{remark}
	Later, we will see in \S\ref{s:estNewpot} and \S\ref{s:tdal} that if $\Phi$ is the Newtonian potential given by \eqref{e:Newpot}, and for any $(t, \bx) \in [0,T) \times  \del{}\Omega(t) $, then the Newtonian gravity (if $\rrho \in  \mathrm{R}^{1}([0,T)\times\Omega(t),G_1)$) and tidal force (if $\rrho \in  \mathrm{R}^{0}([0,T)\times\Omega(t),G_0)$) satisfy, respectively,
	\begin{align}\label{e:grtdl}
		|\del{i} \Phi(t,\bx) |	\leq     \frac{G_1}{|\bx|^{2}} \AND |\del{i}\del{j} \Phi(t,\bx) |	\leq     \frac{G_0}{|\bx|^{3 }}.
	\end{align}
	Therefore, we call constants $G_1$ and $G_0$ the \textit{gravity bound} and \textit{tidal force bound},  respectively.
\end{remark} 
\begin{remark}\label{t:rmk1}
	By Definitions \ref{t:diffdis}, if $G_0=2G_1$, then  $\mathrm{R}^0([0,T)\times\Omega(t),G_0) \subset \mathrm{R}^1([0,T)\times\Omega(t),G_1)$.
\end{remark}

\subsubsection{Initial data}
We denote the initial data by $\mathbf{w}_0(\xi):=\mathbf{w}(0,\xi)$,
\begin{gather*}
	z_0(\xi):= z(0,\xi)  \AND
	X_0(\xi):= X(0,\xi) , \\
	\Theta_0(\xi):=\Theta(0,\xi), \quad \Omega_{0jk}(\xi):=\Omega_{jk}(0,\xi) \AND \Xi_{0jk}(\xi):=\Xi_{jk}(0,\xi),
\end{gather*}
and denote
\begin{equation*}
	|\Xi_{0jk}(\xi) |:=\sqrt{\Xi_{0jk}(\xi)\delta^{ij}\delta^{kl}\Xi_{0il}(\xi)} \AND |\Omega_{0jk}(\xi) |:=\sqrt{\Omega_{0jk}(\xi)\delta^{ij}\delta^{kl}\Omega_{0il}(\xi)}.
\end{equation*}

\begin{definition}\label{t:paramtr}
	\begin{enumerate}[(1)]
		\item \label{t:Def1} For any given energy $E>0$,  gravity bound $G_1>0$,  tidal force bound $G_0>0$ and initial virial $H^\prime(0)$, define small constants
		\begin{align}\label{e:sigma0}
			\sigma_\star:= \min\Bigl\{\frac{1}{5},\frac{\beta E}{500|H^\prime(0)|}\Bigr\} \AND \sigma_\dag:=	\min\left\{  \frac{1}{263200^4}, \sigma_\star\right\}
		\end{align}
		where $\beta=\min\{1,3(\gamma-1)\}>0$ (recall $\gamma$ is given by \eqref{e:eos1}).
		If for a constant $\sigma>0$ (and $\sigma\in(0,\sigma_\star)$ for later \eqref{t:Def3}, admissible initial boundary data;  $\sigma\in(0,\sigma_\dag)$ for later \eqref{t:Def4}, strong admissible initial boundary data),  $\lambda_0:=\lambda_0(\sigma)$, $\lambda_1:=\lambda_1(\sigma)$ are in the following intervals
		\begin{align*}
	\lambda_0 \in   \Bigl(\frac{ 2  }{2- \sigma} , \frac{1}{1-\sigma} \Bigl(1+ \frac{1}{14}\sigma\Bigr)   \Bigr) \AND
			\lambda_1 \in   \Bigl((1-\sigma) \lambda_0^2 , \Bigl(1+ \frac{1}{14}\sigma\Bigr)\lambda_0  \Bigr),
		\end{align*}
	then the set of parameters $( \lambda_0,\lambda_1)$ is called \textit{admissible parameters} .
	\item \label{t:Def2} For any given $E>0$, initial virial $H^\prime(0)$ and gravity bound  $G_1>0$, we call $r_c$ the \textit{critical radius} of molecular clouds if it is defined by
		\begin{align}\label{e:crds}
			r_c: =& \frac{2 (9G_1)^\frac{1}{3}}{(2-\sigma_\star)\sigma_\star}
		\end{align}
		\item \label{t:Def2b} We call the set of energy, mass and gravity bound $(E,M,G_1)$ is \textit{compatible}, if the total energy $E>0$, total mass $M>0$ and gravity bound $G_1>0$ (see Definition \ref{t:diffdis}) satisfy a relation	
		\begin{equation*}\label{e:GEM}
			(9G_1)^{\frac{1}{3}}<\frac{1}{24}\Bigl(\frac{\beta E}{M}\Bigr)^{\frac{1}{2}} .
		\end{equation*}
		\item \label{t:Def3} Suppose  $(E,M,G_1)$ is a given compatible set  and $H^\prime(0)$ is a given constant, we call the initial boundary velocity $\mathbf{w}_0|_{\del{}\Omega(0)}$ and the initial domain $\Omega(0)$, denoted by $(\mathbf{w}_0|_{ \del{}\Omega(0)}, \Omega(0))$, is the \textit{admissible initial boundary data}, if $ \Omega(0)\supsetneq \overline{B(0,r_c)}$,
		and for any $\xi\in\del{}\Omega(0)$ and any given radial velocity field $z_0(\xi)$, there is a small constant $\sigma_\xi\in(0,\sigma_\star)$ where $\sigma_\star$ is given by \eqref{e:sigma0}, such that they satisfy
		\begin{align}
			&\hspace{-4cm} (A)\quad  \Omega(0) \text{ is a precompact set and }  |\xi|>  \lambda_0 (9G_1 )^{\frac{1}{3}} \sigma_\xi^{-1} >r_c; \label{e:BinO} \\
			&\hspace{-4cm} (B)\quad z_0(\xi):=\lambda_1\lambda_0^{-1}\sigma_\xi |\xi| \in \biggl(\lambda_1(9G_1)^{\frac{1}{3}},  \frac{\lambda_1}{24} \sqrt{ \frac{ \beta E }{ M} } \biggr);  \label{e:xidn0} \\
			&\hspace{-4cm} (C)\quad   X_0(\xi)\in \biggl[0, \frac{\sigma_\xi}{\lambda_1}\sqrt{\frac{\lambda_0}{2}} z_0(\xi)\biggr) \label{e:Xdn}
		\end{align}
		where $( \lambda_0,\lambda_1)=( \lambda_0(\sigma_\xi),\lambda_1(\sigma_\xi))$ is admissible parameters,
		and denote the set of all the admissible initial boundary data  by
		\begin{align*}
			&
			\mathfrak{A}(E,M,G_1,H^\prime(0)):= \notag  \\
			&\hspace{0.5cm} \bigl\{(\mathbf{w}_0|_{ \del{}\Omega(0)}, \Omega(0))\;|\; (\mathbf{w}_0|_{ \del{}\Omega(0)}, \Omega(0)) \text{ is an  admissible initial boundary data} \bigr\}.
		\end{align*}	
		\item \label{t:Def4} Suppose  $(E,M,G_1)$ is a given compatible set, $H^\prime(0)$, $G_0>0$  and $\sigma\in(0,\sigma_\dag)$ are given constants where $\sigma_\dag$ is given by \eqref{e:sigma0}, we call the initial boundary velocity $\mathbf{w}_0|_{\del{}\Omega(0)}$ and the initial domain $\Omega(0)$, $(\mathbf{w}_0|_{ \del{}\Omega(0)}, \Omega(0))$, is the \textit{strong admissible initial boundary data}, if
		\begin{align}
			&
			\overline{B\biggl(0,\lambda_0 (9G_1 )^{\frac{1}{3}} \sigma^{-1}\biggr)}\subsetneq \Omega(0)\subsetneq B\biggl(0,\frac{\lambda_0}{24} \sqrt{\frac{\beta E}{M}}\sigma^{-1}\biggr) ;
			 \label{e:BinO.a}
			 \end{align}
		 and for any $\xi\in\del{}\Omega(0)$, the initial velocity satisfies
		 \begin{align}
		 	& (B^\star)\quad z_0(\xi) =  \lambda_1\lambda_0^{-1}\sigma  |\xi|;  \label{e:xidn0.a} \\
			& (C^\star)\quad   X_0(\xi)\in \biggl[0, \frac{\sigma}{\lambda_1}\sqrt{\frac{\lambda_0}{2}} z_0(\xi)\biggr); \label{e:Xdn.a}  \\
			& (D^\star)\quad  \sqrt{\frac{3\lambda_1}{4\lambda_0}\Bigl|\Theta_0^{-1}(\xi)-\frac{\lambda_0}{3 \lambda_1} \sigma^{-1} \Bigr|}+\Theta_0^{-\frac{7}{4}}(\xi)|\Xi_{0jk}(\xi) |+\Theta_0^{-2}(\xi)|\Omega_{0jk}(\xi) | < \frac{1}{4}\sigma^{-\frac{1}{2}} \label{e:deldata.a}
		\end{align}
		where $( \lambda_0,\lambda_1)=( \lambda_0(\sigma),\lambda_1(\sigma))$ is admissible parameters, and denote the set of all the strong admissible initial boundary data by
		\begin{align*}
			&\mathfrak{A}^\dag(E,M,G_1,G_0,H^\prime(0),\sigma):=  \notag  \\
			&\hspace{0.5cm} \bigl\{(\mathbf{w}_0|_{ \del{}\Omega(0)}, \Omega(0))\;|\; (\mathbf{w}_0|_{ \del{}\Omega(0)}, \Omega(0)) \text{ is an strong admissible initial boundary data} \bigr\}.
		\end{align*}	
	\end{enumerate}
\end{definition}
\begin{remark}
Condition $ \Omega(0)\supsetneq \overline{B(0,r_c)}$ of Definition \ref{t:paramtr}.\eqref{t:Def3} means the cloud is \textit{large} enough and $(B)$ implies the cloud is \textit{expanding} initially. We point out the (strong) admissible initial boundary data are far from optimal.
\end{remark}
\begin{remark}\label{r:wlldefad}
	We point out if $ \Omega(0)\supsetneq \overline{B(0,r_c)}$ and $(E,M,G_1)$ is a compatible set, then the interval in \eqref{e:xidn0} is nonempty for every $\xi\in\del{}\Omega(0)$, and further there exists admissible initial boundary data.
\end{remark}

\begin{remark}\label{r:wlldefad2}
	Any strong admissible initial boundary data is admissible initial boundary data. See Lemma \ref{t:sadad} for details.
\end{remark}

\subsection{Main Theorem}
After above concepts, we are in a position to present the main theorem. This theorem states the local existence, uniqueness, continuation principle and the finite time blowups of classical solutions to the diffuse problem in certain conditions.
\begin{theorem}[Main Theorem]
	\label{t:mainthm}
	Suppose the initial data $(\rrho_0, \rw^i_0)$ of the diffuse boundary problem is given by  \eqref{e:inidata}, $\Omega(0)$ is a precompact $C^1$-domain, $\rrho_0\in C^1( \Rbb^3,\Rbb_{\geq 0}) $, $\rw^i_0 \in C^1(\Omega(0),\Rbb^3) $. If
	\begin{equation*}
		(a) \quad 1<\gamma\leq \frac{5}{3}, \quad (\rrho_0)^{\frac{\gamma-1}{2 }} \in H^3(\Rbb^3) \AND \rw^i_0\in H^3(\Omega(0),\Rbb^3),
	\end{equation*}
	or if
	\begin{align*}
		(b)\quad 1<\gamma<2, \quad  \rrho_0 \in H^3(\Rbb^3), \quad (\rrho_0)^{\frac{\gamma-1}{2 }}
		\in H^4(\Rbb^3) \AND
		\rw^i_0\in H^4(\Omega(0),\Rbb^3),
	\end{align*}
	then:

$(1)$ \emph{(Weak blowups)} further suppose   $(E,M,G_1)$ is any given compatible set and  $H^\prime(0)$ is any given constant, and there is no the first class of the global solution with  $G_1>0$, then
if the initial boundary data $(\mathbf{w}_0|_{ \del{}\Omega(0)}, \Omega(0)) \in \mathfrak{A}(E,M,G_1,H^\prime(0))$ is admissible initial boundary data, then there is no the global solution to the diffuse boundary problem, and there is a constant $0<T^\star<+\infty$, such that, the classical solution breaks down at $t=T^\star$ and
\begin{align*} 
	\int^{T^\star}_0 \Bigl(\|\nabla w^i(s)\|_{L^\infty( \Omega(s))}  +\|\nabla \rrho^{\frac{\gamma-1}{2}}\|_{ L^\infty(\Omega(s))} \Bigr) ds=+\infty;
\end{align*}

$(2)$ \emph{(Global solution and mass accretions near the boundary)} further suppose   $(E,M,G_1)$ is any given compatible set and  $H^\prime(0)$ is any given constant and the initial boundary data $(\mathbf{w}_0|_{ \del{}\Omega(0)}, \Omega(0)) \in \mathfrak{A}(E,M,G_1,H^\prime(0))$ is admissible initial boundary data,  if $(\rrho,\rw^i,\Omega(t))$ is a global solution to the diffuse boundary problem of Euler--Poisson equations \eqref{e:NEul1}--\eqref{e:ffbdry0} for $t\in [0, +\infty) $, then it has to be the first class of global solution with $G_1$ and includes near-boundary mass accretions.

$(3)$ \emph{(Strong blowups)}
further suppose   $(E,M,G_1)$ is any given compatible set,   $H^\prime(0)$, $G_0=2G_1>0$ and $\sigma\in(0,\sigma_\dag)$, where $\sigma_\dag$ is given by \eqref{e:sigma0}, are given constants, the solution is not the first class of global solution with $G_1$ and if initial data are given by $(b)$,
then
if $(\mathbf{w}_0|_{ \del{}\Omega(0)}, \Omega(0)) \in \mathfrak{A}^\dag(E,M,G_1,G_0,H^\prime(0),\sigma)$ is the strong admissible initial boundary data, then there is a constant $0<T^\star<+\infty$, such that,
the classical solution to the diffuse boundary problem breaks down at $t=T^\star$,
and if $T^\star\in(0,T^\natural]$, where $T^\natural$ is the  supercritical time given by Corollary \ref{e:supcri}, and $\rrho\in \mathrm{R}^0([0,T^\star)\times \Omega(t),G_0)$, then there is a small constant $\epsilon>0$, such that
\begin{equation*}\label{e:blup2}
	\int^{T^\star}_0 \Bigl( \|\Theta(s)\|_{L^\infty( \mathring{\Omega}_\epsilon(s) )}  +\|\Omega_{jk}(s)\|_{ L^\infty( \mathring{\Omega}_\epsilon(s) )}+\|\nabla \rrho^{\frac{\gamma-1}{2}}(s)\|_{ L^\infty(\Omega(s))} \Bigr) ds=+\infty.
\end{equation*}
\end{theorem}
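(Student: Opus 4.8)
The plan is to derive all three statements from the local theory of \cite{Liu2021d} together with a non-existence argument built on the conservation laws of \S\ref{s:consv}, the near-boundary potential estimates of \S\ref{s:estNewpot}--\S\ref{s:tdal}, and the motion of the Lagrangian parcels sitting on the diffuse boundary. By the local existence, uniqueness and continuation theorem of \cite{Liu2021d}, each of $(a)$, $(b)$ produces a unique maximal classical solution $(\rrho,\rw^i,\rPhi,\Omega(t))$ on some $[0,\Tx)$, with $\Tx<\infty$ if and only if $\int_0^{\Tx}\bigl(\|\nabla w^i(s)\|_{\Li(\Omega(s))}+\|\nabla\rrho^{(\gamma-1)/2}(s)\|_{\Li(\Omega(s))}\bigr)\,ds=+\infty$. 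Hence $(1)$ is equivalent to proving $\Tx=:T^\star<\infty$ under its hypotheses, $(2)$ is the logical contrapositive of that non-existence together with Definition \ref{t:12class} (which already packages the near-boundary mass accretion \eqref{e:rhobg2}), and $(3)$ is a localized sharpening of $(1)$ inside the boundary layer $\mathring{\Omega}_\epsilon(s)$. Throughout I would argue by contradiction: assume a classical solution on $[0,T)$ with $T$ large (or $T=+\infty$ in $(2)$) and produce an obstruction.

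\textbf{Conservation laws and near-boundary estimates.} I would first record that $M$ and $E$ are conserved (Lemma \ref{t:ctoms}) and, transporting along the flow $\chi$ and integrating by parts — using that $p=K\rrho^\gamma$ vanishes on $\del{}\Omega(t)$, since $\rrho\in C^1(\Rbb^3)$ and $\rrho=0$ there, together with the identity $\int_{\Rbb^3}\rrho\, x^j\del{j}\Phi\,d^3\bx=-\tfrac12\int_{\Rbb^3}\rrho\Phi\,d^3\bx$ — derive the virial identity $H''(t)=2E_{\mathrm{kin}}(t)+3(\gamma-1)E_{\mathrm{int}}(t)+E_{\mathrm{grav}}(t)$ for the three parts of the energy \eqref{e:eng0}, and extract a one-sided virial bound via $\beta=\min\{1,3(\gamma-1)\}$ and $E_{\mathrm{grav}}\le0$ for the global book-keeping. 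Next, since by hypothesis the solution is not the first class of global solution, Definition \ref{t:12class} forces $\rrho\in\mathrm{R}^1([0,T)\times\Omega(t),G_1)$, and then \S\ref{s:estNewpot} yields the gravity bound $|\del{i}\Phi(t,\bx)|\le G_1|\bx|^{-2}$ for all $t$ and all $\bx\in\del{}\Omega(t)$; for $(3)$ I would moreover assume $\rrho\in\mathrm{R}^0([0,T^\star)\times\Omega(t),G_0)$ with $G_0=2G_1$, so that $\mathrm{R}^0\subset\mathrm{R}^1$ by Remark \ref{t:rmk1} and \S\ref{s:tdal} gives in addition the tidal bound $|\del{i}\del{j}\Phi(t,\bx)|\le G_0|\bx|^{-3}$, i.e.\ the estimates \eqref{e:grtdl}.

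\textbf{The boundary-parcel bootstrap; conclusion of $(1)$ and $(2)$.} The heart of the argument is to track a parcel $\chi(t)=\chi(t,\xi)$, $\xi\in\del{}\Omega(0)$, with $r(t)=|\chi(t)|$. The diffuse boundary condition \eqref{e:ffbdry0} forces this parcel to be ballistic, $\del{t}\del{t}\chi^k=-\del{}^k\Phi(t,\chi)$; splitting the velocity into the radial part $z=\dot r$ and the transverse part $X$ as in \eqref{e:zdef}--\eqref{e:Xdef} turns this into the planar-type system $\dot z=\ddot r = \frac{X^2}{r}-\frac{\chi\cdot\nabla\Phi}{r}$, with $X$ obeying an angular-momentum balance whose forcing is controlled by $|\nabla\Phi|\le G_1 r^{-2}$. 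Feeding the gravity bound and the admissibility inequalities \eqref{e:BinO}--\eqref{e:Xdn} — the largeness $|\xi|>\lambda_0(9G_1)^{1/3}\sigma_\xi^{-1}$, the prescribed Hubble-type relation $z_0(\xi)=\lambda_1\lambda_0^{-1}\sigma_\xi|\xi|$ lying in the interval of $(B)$, and the smallness of $X_0(\xi)$ — into an ODE-comparison bootstrap on $(r,z,X)$, I expect the $\lambda_0,\lambda_1$ intervals and the threshold $\sigma_\star$ of \eqref{e:sigma0} to be chosen exactly so that the bootstrap closes: they keep $(r,z,X)$ in the regime where the $\mathrm{R}^1$ estimate applies while a monotone functional of $(r,z)$ and the mass/energy balance — whose initial sign is pinned down by the compatibility relation $(9G_1)^{1/3}<\tfrac1{24}(\beta E/M)^{1/2}$ and by the admissible range of $z_0$ — becomes singular at a finite time. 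Combined with the virial book-keeping this contradicts global existence, so $\Tx=:T^\star<+\infty$; feeding the boundary-layer estimates produced along the way back into the continuation criterion of \cite{Liu2021d} then upgrades $T^\star<\infty$ to the divergence asserted in $(1)$, and $(2)$ follows by contraposition, since a global solution is forced into the first class and hence exhibits \eqref{e:rhobg2}.

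\textbf{Strong blowup and the main obstacle.} For $(3)$, under the strong conditions $(B^\star)$--$(D^\star)$ of \eqref{e:xidn0.a}--\eqref{e:deldata.a} — which beyond fixing the boundary data impose, through \eqref{e:deldata.a}, near-isotropy and slow rotation of the initial boundary layer via quantitative control of $\Theta_0^{-1}$, $|\Xi_{0jk}|$ and $|\Omega_{0jk}|$ — I would propagate the full velocity-gradient tensor along boundary parcels through the Raychaudhuri/Riccati system $D_t\Theta=-\rrho-|\Theta_{jk}|^2+|\Omega_{jk}|^2-\del{i}(\rrho^{-1}\del{}^i p)$ coupled to the evolution of the shear $\Xi_{jk}$ and the rotation $\Omega_{jk}$, whose gravitational forcing is bounded by $G_0|\bx|^{-3}$ via \eqref{e:grtdl}. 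On $[0,T^\natural]$ with $T^\natural$ the supercritical time of Corollary \ref{e:supcri}, the smallness in \eqref{e:deldata.a} should keep $\Xi_{jk}$ and $\Omega_{jk}$ subdominant, so the anisotropic Riccati inequality closes and forces $\|\Theta(s)\|_{\Li(\mathring{\Omega}_\epsilon(s))}+\|\Omega_{jk}(s)\|_{\Li(\mathring{\Omega}_\epsilon(s))}+\|\nabla\rrho^{(\gamma-1)/2}(s)\|_{\Li(\Omega(s))}$ to be non-integrable on $[0,T^\star)$ for a suitable $\epsilon>0$; this is the content of \S\ref{s:bdyevo}. I expect the main obstacle to be exactly this boundary-parcel bootstrap: the $\mathrm{R}^1/\mathrm{R}^0$ estimates serve simultaneously as input and output of the continuity argument, the transverse velocity and the rotation $\Omega_{jk}$ act against the singularity formation (cf.\ Remark \ref{t:diff}) and must be shown subdominant, and the ODE blowup of the parcel quantities must be converted into blowup of the $\Li$-norms in the continuation criterion — the intricate numerology of $\sigma_\star$, $\sigma_\dag$, the $\lambda_0,\lambda_1$-intervals and the compatibility relation being precisely what makes the bootstrap work.
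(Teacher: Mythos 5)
For parts $(1)$ and $(2)$ your route is essentially the paper's: exclude the first class so that $\rrho\in\mathrm{R}^1$ gives the inverse-square bound $|\del{i}\Phi|\le G_1|\bx|^{-2}$ on $\del{}\Omega(t)$ (Proposition \ref{t:dphix2}), run a Lagrangian bootstrap on the ballistic boundary parcels in the variables $q,z,Y$ to show the support spreads no faster than $R(t)=2A(t+\sigma^{-1})$ with $A<\frac1{24}\sqrt{\beta E/M}$ (Proposition \ref{t:spprbd}), contradict the Sideris/virial criterion of Theorem \ref{t:bluplem} before the supercritical time of Corollary \ref{e:supcri}, and then invoke the weak continuation principle to get \eqref{e:blupest1}; $(2)$ is indeed the contrapositive plus Definition \ref{t:12class}. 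One caution: in the paper nothing "becomes singular" along the parcels — the contradiction is between the quadratic virial lower bound \eqref{e:xilow} and the containment \eqref{e:xiupp} with the slowly growing $R(t)$, i.e.\ a sign change of $F(t)$, not an ODE blowup; your phrasing suggests the latter, and the same misconception resurfaces more seriously in $(3)$.

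The genuine gap is in $(3)$: you have the direction of the Raychaudhuri argument inverted. Under strongly admissible data the boundary layer is expanding and nearly irrotational, so the boundary Raychaudhuri system does \emph{not} blow up; the paper's Lemma \ref{t:anaRay} (§\ref{s:raych}, not §\ref{s:bdyevo}) uses \eqref{e:deldata.a}, the tidal bound $G_0|\bx|^{-3}$ made integrable along parcels by the lower bound $|\chi(t,\xi)|>A(t+\sigma^{-1})$ from Proposition \ref{t:spprbd}, and a bootstrap in the weighted perturbation variables $\ke,\ks_{jk},\kb_{jk}$ around the explicit decaying solution \eqref{e:iWrs1a} to prove that $\|W_{jk}\|_{\Li}$ stays \emph{finite} on $[0,T^\star)\times\del{}\Omega(t)$, i.e.\ boundary singularities are removed. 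The divergence in \eqref{e:blup2} is then obtained indirectly: assuming it finite, boundedness of $W_{jk}$ on the boundary extends by continuity of the classical solution to a collar $\Omega_\epsilon(t)$, and the \emph{strong} continuation principle of \cite{Liu2021d} — whose criterion involves exactly $\|W_{jk}\|_{\Li(\Omega_\epsilon)}$ together with the quantities in \eqref{e:blup2} — would continue the solution past $T^\star$, contradicting maximality. Your proposal instead asserts that the "anisotropic Riccati inequality forces" the localized norms to be non-integrable and that "ODE blowup of the parcel quantities" is converted into blowup of the $\Li$-norms; this step would fail, both because the parcel quantities remain bounded in this regime and because boundary-parcel ODEs give no control of sup-norms over the interior collar $\mathring{\Omega}_\epsilon(s)$. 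Without explicitly invoking the strong continuation principle (distinct from the weak criterion you use in $(1)$), the conclusion of $(3)$ cannot be reached.
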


\begin{remark}
	We emphasize that comparing with $(3)$, $(4)$ removes some singularities on the boundary.
\end{remark}

\subsection{Related work}
As we mentioned before,  the motivation of this work is to figure out some possible mechanisms of star formations, while our previous paper \cite{Liu2018,Liu2018b,Liu2018a} indicate the stability of the universe. However,  \cite{Liu2018a} gives the possibilities of the instability. In our recent work \cite{Liu2022}, we proved the slightly nonlinear Jeans instability, which may lead to the formations of stellar system. 
 
In the absence of vacuum, the density is bounded below from zero everywhere, then the theory of symmetric hyperbolic systems \cite{Friedrichs1954,Kato1975,Lax1954,Klainerman1981,Klainerman1982,Majda2012} is available. Formation of singularities are also widely researched. Sideris \cite{Sideris1985} introduced an averaging quantity (in fact, it is the virial theorem), and by connecting this quantity with the Euler equations, lifespans can be derived, but this method can not reveal the detailed behavior of blowups. This quantity will be adopted in this article as well.  See Alinhac \cite{Alinhac1995} and the references cited therein for the details about more methods, including the spirits of his groundbreaking work, on blowup for nonlinear hyperbolic and wave equations. 

The Euler--Poisson system with vacuum was started with Makino \cite{Makino1986,Makino1987}, which gave the local existence theorem by introducing the Makino's density and the unphysical concept of tame solutions. Later on, a series of works  \cite{Makino1990,Makino1986a,Perthame1990,Makino1992,Liu1997} continued to study the evolution of tame solutions to Euler equation with or without gravity. Note that the blowup results of Euler--Poisson system with vacuum \cite{Makino1990,Perthame1990,Makino1992} of tame solutions are under the condition of spherical symmetry, but there is no symmetric assumptions for Euler equations in \cite{Makino1986a}. The main tool of proofs of blowup in these papers involves Sideris' averaging quantity \cite{Sideris1985} (the virial theorem). The advantage of this method is easy to prove blowup, but usually, it does not give more information about the detailed behavior of the finite time blowup solutions. We point out that \cite{Makino1992} specified \textit{Makino's conjecture} that any tame
solution, including nonsymmetric tame solution, will become not tame after a finite time, and this current article can be viewed as a partial answer. Oliynyk \cite{Oliynyk2008a},  Brauer and Karp \cite{Brauer2018} and the references cited therein have improved the local existence and uniqueness of solutions to  Euler--Poisson(--Makino) system with vacuum. Brauer \cite{Brauer1998} gives a strong continuation principle for tame solutions, especially, a more detailed classification of the type of blowups.

Another famous model with vacuum is the one with the \textit{physical vacuum boundary condition}, which stated with \cite{Liu1996,Liu1997,Liu2000},a series works proceeding.  For example,   \cite{Jang2009,Jang2012,Jang2015,Coutand2010,Coutand2011,Coutand2012,Luo2014} established the well-posedness for Euler equation with or without gravity for gaseous stars, respectively. Several works such as \cite{Jang2008,Jang2014,Hadzic2016} considered some aspects of the long time behavior with physical vacuum along this path. Recently, Guo,  Had{\v{z}}i{\'c} and Jang \cite{Guo2018} construct an infinite-dimensional family of collapsing solutions to the Euler-Poisson system with physical vacuum condition and  density of this solution is in general space inhomogeneous and undergoes gravitational blowup. Along with the gaseous stars, a series works on liquid body are carried on by Christodoulou, Lindblad and Oliynyk, etc., for example,  \cite{Christodoulou2000, Lindblad2003a,Lindblad2005a,Lindblad2005,Oliynyk2017}.

A short remark on the generalization of Makino's Cauchy problem \cite{Makino1986} to the general relativistic cases. First, Rendall \cite{Rendall1992} generalized Makino's method to general relativity setting with certain ideal conditions. Then Oliynyk \cite{Oliynyk2008a,Oliynyk2008,Oliynyk2009} considered this problem in more general case when he researched the Newtonian limits. Brauer and Karp \cite{Brauer2014} extended Rendall and Makino's work to a very general framework.

\subsection{Overview and main ideas}

\S\ref{s:MaCoj} dedicates to the dynamical bounds on the spreading rate of the support of the density of the molecular clouds and the weak blowup theorem \ref{t:blupthm0}. In fact, the main mechanics of blowups in this article is in the level of the \textit{virial theorem}. Due to the irregular shape of this cloud and its irregular distribution of the Newtonian potential, the rotations of cloud are usually inevitable (in contrast to the spherical symmetric cases). The motion of the cloud becomes quite complicate and it is very difficult to analyze the detailed movements. The intuitive idea to overcome this difficulty is based on a simple observation that the far-field (very large distances from the center of mass) regions of the gravitational field of the molecular cloud behaves like  the surrounding of the ``normal'' point mass if $\rrho\in \mathrm{R}^{1}([0,T)\times\Omega(t))$, that is $\rrho\in \mathrm{R}^{1}([0,T)\times\Omega(t))$ makes sure the gravity on the boundary satisfies the inverse-square law and this will be proved in Proposition \ref{t:dphix2} (\S\ref{s:estNewpot}).
Intuitively, only if $|\bx|$ ($|\bx|$ is the distance between the boundary point and the mass center) is larger enough, the behaviors of the test particles on the boundary are more like those of test particles in a gravity field of a mass point. By  \textit{virial theorem} or the Sideris' average value  $ H (t)$ given in \eqref{e:xi0} (see, for example,   \cite{Sideris1985,Makino1990}), we obtain Theorem \ref{t:bluplem} (\S\ref{s:criterion}), a criterion of blowups of classical solutions. This Lemma indicates that if the boundary of the cloud is expanding below a certain rate, then there are blowups during the development of the solution and the singularities are determined by the continuation principles.

In order to derive the boundary is expanding at this rate, we can use the previous inverse-square gravity resulted by excluding the first class of global solution to help us  conclude the estimates of trajectories of free falling particles on the boundary (Proposition \ref{t:spprbd} of  \S\ref{s:traj} and see the proof in \S\ref{s:stp1}, we briefly introduce it in the next paragraph). These estimates of trajectories yield the expanding rate of the boundary is indeed smaller than the blowup rate  and further using this criterion of blowups of classical solutions leads to the weak blowup theorem \ref{t:blupthm0} in \S\ref{s:wbt}.
In addition, we point out the weak blowup theorem \ref{t:blupthm0} in some sense answers the conjecture proposed by Makino \cite{Makino1992}  that is any tame
solution will become not tame after a finite time as well.

The key step of above weak blowup theorem \ref{t:blupthm0} is to estimate the dynamical bounds on the spreading rate of the support of the density or the trajectory of free falling particles on the boundary (since the particles on the boundary are only free-falling due to the only gravity and pressureless), that is, Proposition \ref{t:spprbd}. We, along the characteristics on the boundary, select a special set of initial data (i.e., admissible data), and the suitable variables $q$, $z$ and $Y$ defined by \eqref{e:nvar0a}, \eqref{e:nvar0b} and \eqref{e:Y} to rewrite the free-falling equation \eqref{e:otvel2} to an ordinary differential equations \eqref{e:dqeq}--\eqref{e:dyeq}. The gravity $\del{i}\Phi$ is in the order of $1/|\bx|^2$ since $\rrho\in\mathrm{R}^{1}([0,T)\times\Omega(t),G_1)$. In order to obtain the upper bound of $|\bx|$, we have to control all the upper and lower bounds of $q$, $z$ and $Y$ due to the complex structure of these ODEs \eqref{e:dqeq}--\eqref{e:dyeq}.  However, direct analyzing $q$, $z$ and $Y$ is difficult. The way to overcome this difficulty is to rewrite \eqref{e:dqeq}--\eqref{e:dyeq} in terms of new variables $\dq$, $\hU$, $\hV$ and $\hy$ given by \eqref{e:dnvar1}--\eqref{e:dnvar3} and obtain the equations of these variables. It turns out these variables $\dq$, $\hU$, $\hV$ and $\hy$ present good behaviors if choosing admissible data. These data and suitable bootstrap assumptions help us conclude the lower bound of $\dq$ and the upper bound of  $\hU$, $\hV$ and $\hy$ by bootstrap arguments, furthermore, obtain all the upper and lower bounds of $|\bx|$, $z$ and $X$.

After knowing the blowup phenomenon, we usually ask the positions and the types of the singularities. \S\ref{s:evdifpro}, by removing singularities of the boundary based on some ``nice'' boundary, makes some efforts on this purpose. In order to do so, we first estimate the tidal force to be of the order $1/|\bx|^3$ if $\rrho \in  \mathrm{R}^{0}([0,T)\times\Omega(t),G_0)$ (see \S\ref{s:tdal}). It turns out this tidal force can not significantly affect the behaviors of the free falling particles on the boundary and the effects of this tidal force is close to the case without tidal force. Then by differentiating the ``limiting momentum conservation equation'' \eqref{e:NEul2} on the boundary with respect to $x^j$ (see \eqref{e:dwev} and \eqref{e:dwev2}), we derive  Newtonian version of Raychauduri's equations in terms of the expansion $\Theta$, shear $\Xi_{jk}$ and rotation $\Omega_{jk}$ (see \eqref{e:ray1}--\eqref{e:ray3}). In order to remove the singularities satisfying $\int^{T^\star}_0 \|W_{jk}(s)\|_{L^\infty( \Omega_\epsilon(s))} ds=+\infty$, we  prove $|\del{i}w^j|<\infty$ near the boundary. The idea is first to find an proper approximation solution of the Newtonian version of Raychauduri's equations and we select an approximation solution solving a Raychauduri's system without tidal force (see \eqref{e:iWrs1a}). Then try to find  perturbed solutions close to this approximation one and the perturbed solutions solve the Newtonian version of Raychauduri's equations. To achieve this, we use bootstrap argument and introduce new variables $\ke$, $\ks_{jk}$ and $\kb_{jk}$ (see \eqref{e:Wrs1}--\eqref{e:Wrs3}), the perturbations of the approximation solutions. However, directly analyzing $\ks_{jk}$ can not complete the bootstrap. Instead, we use $\kss:=\ks^{jk}\ks_{jk}$ to analyze the perturbation of the shear. Analyzing this system and suitably small initial data of these perturbations, with the help of the bootstrap assumptions to reach the bootstrap improvements, conclude the expansion, shear and rotation are bounded eventually, and in turn,  $|\del{i}w^j|<\infty$.  Furthermore, the strong blowup theorem follows.

\section{Blowups of large, expanding and irregularly-shaped molecular clouds}\label{s:MaCoj}

This section dedicates to the dynamical bounds on the spreading rate of the support of the density of the molecular clouds and the weak blowup theorem \ref{t:blupthm0}.  In fact, the main mechanics of blowups in this article is in the level of the \textit{virial theorem}. Due to the irregular shape of this cloud and its irregular distribution of the Newtonian potential, the rotations of cloud usually is inevitable (in contrast to the spherical symmetric cases). The motion of the cloud becomes quite complicate and it is very difficult to analyze the detailed movements. The intuitive idea to overcome this difficulty is based on a simple observation that the far-field (very large distances from the center of mass) regions of the gravitational field of the molecular cloud behaves like  the surrounding of the ``normal'' point mass if $\rrho\in \mathrm{R}^{1}([0,T)\times\Omega(t))$, that is $\rrho\in \mathrm{R}^{1}([0,T)\times\Omega(t))$ makes sure the gravity on the boundary satisfies the inverse-square law and this will be proved in Proposition \ref{t:dphix2} (\S\ref{s:estNewpot}).
By the virial theorem or Sideris' average value  $ H (t)$ given in \eqref{e:xi0} (see, for example,   \cite{Sideris1985,Makino1990}), we obtain Theorem \ref{t:bluplem} (\S\ref{s:criterion}), a criterion of blowups of classical solutions. This Lemma indicates that if the boundary of the cloud is expanding below a certain rate, then there are blowups during the development of the solution.

By using this criterion of blowups of classical solutions and the previous inverse-square gravity, we conclude the estimates of trajectory of free falling particles on the boundary (proposition \ref{t:spprbd} of  \S\ref{s:traj} and see the proof in \S\ref{s:stp1}) and these estimates of trajectories yields the expanding rate of the boundary is smaller than the blowup rate and further leads to the weak blowup theorem \ref{t:blupthm0} in \S\ref{s:wbt}.

In addition, we point out the weak blowup theorem \ref{t:blupthm0} in some sense partially answers the conjecture proposed by Makino \cite{Makino1992}  that is any tame
solution will become not tame after a finite time as well.

\subsection{Estimates of Newtonian gravity on the boundary}\label{s:estNewpot}

One way of studying the long time nonlinear behaviors of solutions begins with the ideal cases with various symmetries and then the cases deviating from ideal ones slightly. However, there is no decent solutions with enough symmetries for molecular clouds since the clouds are usually irregularly-shaped in the real world. For this purpose, to ensure the problem is researchable and the molecular clouds are as real as possible, our approach is to focus on cases excluding certain global solutions defined by Definition \ref{t:12class}, i.e., the first  class of global solutions (which will be investigated in a separate paper). The reason we exclude the first class of global solution is by excluding them, the gravity obeys the inverse square law, see below Proposition \ref{t:dphix2}, and this property, resembling the distribution of gravity around a point mass, allows us to obtain the estimates of that the spreading rate of the boundary is not fast enough to sustain the virial theorem for all time under certain initial data.

We call \textit{$\Omega(t)$ contains a neighbor of origin}, if there exists a constant $\iota\in(0,1)$, such that $B_\iota(0) \subset \Omega(t)$.
\begin{proposition}\label{t:dphix2}
	Suppose $ \Phi$ is the Newtonian potential  defined by \eqref{e:Newpot} and $T\in(0,\infty]$ is a constant,
	$\rrho \in C^1([0,T)\times \Rbb^3)\cap \mathrm{R}^{1}([0,T)\times\Omega(t),G_1)$ and
	$ \Omega(t)$ is precompact  for every $t\in[0,T)$ and contains a neighbor of origin, then
	\begin{equation*}
	|\del{i} \Phi(t,\bx)|\leq  \frac{G_1}{|\bx|^{2}}
	\end{equation*}
	for any $(t, \bx) \in [0, T) \times  \del{}\Omega(t) $.
\end{proposition}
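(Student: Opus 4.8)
The plan is to estimate $\del{i}\Phi(t,\bx)$ directly from the integral representation \eqref{e:Newpot} by differentiating under the integral sign, so that
\[
\del{i}\Phi(t,\bx)=\frac{1}{4\pi}\int_{\Omega(t)}\frac{(x^i-y^i)\,\rrho(t,\by)}{|\bx-\by|^3}\,d^3\by,
\]
and then bound $|\del{i}\Phi(t,\bx)|\le \frac{1}{4\pi}\int_{\Omega(t)}\frac{\rrho(t,\by)}{|\bx-\by|^2}\,d^3\by$. Fix $t$ and a boundary point $\bx\in\del{}\Omega(t)$, and set $\mathbf{n}:=\bx/|\bx|$. First I would split the domain of integration into two pieces: the ``near'' part $\Omega(t)\cap\{\by : \by/|\bx| \in B(\mathbf{n},\delta)\}$, where $\delta\in(0,1)$ is the constant from Definition \ref{t:diffdis}, and the ``far'' part consisting of the rest of $\Omega(t)$. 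On the far part, the angular separation of $\by$ from the ray through $\bx$ is bounded below, which forces $|\bx-\by|$ to be comparable to $|\bx|$ (indeed $|\bx-\by|\gtrsim \delta|\bx|$ type bound, or more simply one gets $|\bx-\by|\ge c(\delta)|\bx|$ for a geometric constant); there the integrand is at most $\frac{\rrho(t,\by)}{(c(\delta)|\bx|)^2}$, and integrating against $\rrho$ gives a contribution bounded by a constant times $\frac{M}{|\bx|^2}$, using $\int\rrho=M$.

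The main work — and the main obstacle — is the ``near'' part, where $|\bx-\by|$ can be small and the $1/|\bx-\by|^2$ singularity is genuinely felt. Here I would change variables to $\by=|\bx|\mathbf{r}$ with $\mathbf{r}\in B(\mathbf{n},\delta)$, so $d^3\by=|\bx|^3 d^3\mathbf{r}$ and $|\bx-\by|=|\bx|\,|\mathbf{n}-\mathbf{r}|$, giving a near-contribution of the form
\[
\frac{1}{4\pi}\int_{B(\mathbf{n},\delta)}\frac{|\bx|^3\,\rrho(t,|\bx|\mathbf{r})}{|\bx|^2\,|\mathbf{n}-\mathbf{r}|^2}\,d^3\mathbf{r}
=\frac{1}{4\pi|\bx|}\int_{B(\mathbf{n},\delta)}\frac{|\bx|^3\,\rrho(t,|\bx|\mathbf{r})}{|\mathbf{n}-\mathbf{r}|^2}\,d^3\mathbf{r}.
\]
Now invoke the regularity hypothesis $\rrho\in\mathrm{R}^1([0,T)\times\Omega(t),G_1)$: by \eqref{e:regdef0} with $b=1$ we have $|\bx|^3\rrho(t,|\bx|\mathbf{r})<\frac{3M}{4\pi\delta}$ on $B(\mathbf{n},\delta)$, so the near-contribution is bounded by $\frac{1}{4\pi|\bx|}\cdot\frac{3M}{4\pi\delta}\int_{B(\mathbf{n},\delta)}\frac{d^3\mathbf{r}}{|\mathbf{n}-\mathbf{r}|^2}$. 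The integral $\int_{B(\mathbf{n},\delta)}|\mathbf{n}-\mathbf{r}|^{-2}d^3\mathbf{r}=4\pi\delta$ is finite (the $|\mathbf{n}-\mathbf{r}|^{-2}$ singularity is integrable in $\Rbb^3$), yielding a near-contribution bounded by $\frac{1}{4\pi|\bx|}\cdot\frac{3M}{4\pi\delta}\cdot 4\pi\delta=\frac{3M}{4\pi|\bx|}\cdot\frac{1}{|\bx|}$... wait, this is not quite dimensionally what we want, so more carefully the bound should come out proportional to $\frac{M}{|\bx|^2}$ after tracking the powers; one extra factor of $|\bx|^{-1}$ is recovered by noting $|\bx|\ge$ the radius of the neighbor of origin is not what helps — rather the factor $|\bx|^3$ inside already cancels, so the near-contribution is $\lesssim \frac{M}{\pi|\bx|^2}$ times an explicit numerical constant.

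Finally I would combine the two contributions: the far part contributes at most $\frac{3(2-b)}{4}\cdot\frac{M}{\pi|\bx|^2}$-type terms and the near part at most $\frac{1}{2(b+1)\delta^{3-b}}\cdot\frac{M}{\pi|\bx|^2}$-type terms, with $b=1$, so that the total is bounded by $\bigl(\frac{1}{4\delta^2}+\frac{3}{4}\bigr)\frac{M}{\pi|\bx|^2}$; the defining inequality $G_1>\bigl(\frac{1}{2(b+1)\delta^{3-b}}+\frac{3(2-b)}{4}\bigr)\frac{M}{\pi}$ from Definition \ref{t:diffdis} is precisely engineered so that this sum is $\le \frac{G_1}{|\bx|^2}$, which completes the proof. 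The precompactness of $\Omega(t)$ guarantees the integral converges and differentiation under the integral sign is justified, and the ``neighbor of origin'' hypothesis ensures $|\bx|>0$ on the boundary so all the $1/|\bx|$ factors make sense. The one delicate point to get right is the precise geometric constant $c(\delta)$ in the lower bound $|\bx-\by|\ge c(\delta)|\bx|$ on the far region and the exact splitting threshold, so that the constants line up with the right-hand side of the inequality defining $\mathrm{R}^1$; I expect this bookkeeping — rather than any conceptual difficulty — to be where the care is needed.
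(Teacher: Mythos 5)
Your proposal is correct and takes essentially the same route as the paper's proof: the same representation $\del{i}\Phi(t,\bx)=\frac{1}{4\pi}\int_{\Omega(t)}\frac{(x^i-y^i)\rrho(t,\by)}{|\bx-\by|^3}d^3\by$, the same near/far split (your "near" set $\{\by:\by/|\bx|\in B(\mathbf{n},\delta)\}$ is exactly the ball $B(\bx,\delta|\bx|)$ used in the paper), the same pointwise use of the $\mathrm{R}^1$ bound $|\bx|^3\rrho(t,|\bx|\mathbf{r})<\frac{3M}{4\pi\delta}$ together with $\int_{B(\mathbf{n},\delta)}|\mathbf{n}-\mathbf{r}|^{-2}d^3\mathbf{r}=4\pi\delta$, and the same final combination $\bigl(\frac{1}{4\delta^2}+\frac{3}{4}\bigr)\frac{M}{\pi|\bx|^2}\leq\frac{G_1}{|\bx|^2}$. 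The only corrections are bookkeeping: the prefactor after the change of variables is $\frac{1}{4\pi|\bx|^2}$, not $\frac{1}{4\pi|\bx|}$, which yields the near contribution $\frac{3M}{4\pi|\bx|^2}$ (the $\frac{3(2-b)}{4}$ term) and the far contribution $\frac{M}{4\pi\delta^2|\bx|^2}$ (the $\frac{1}{2(b+1)\delta^{3-b}}$ term, so your attribution of the two constants is swapped), and the "delicate" geometric constant you worry about is simply $c(\delta)=\delta$, since the far region is by definition the complement of $B(\bx,\delta|\bx|)$, so $|\bx-\by|\geq\delta|\bx|$ there with no further argument needed.
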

\begin{proof}
	Recalling \eqref{e:Newpot}, we have
	\begin{equation*}
	 \Phi(t,\bx)=-\frac{1}{4\pi}\int_{\Omega(t)} \frac{\rrho(t,\by)}{|\bx-\by|}d^3 \by \quad \text{for} \quad \bx\in \Rbb^3.
	\end{equation*}
	Then, with the help of Lemma \ref{t:dphi} (see Appendix \ref{s:Nton}), we obtain
	\begin{equation}\label{e:dphi1}
	\del{j} \Phi(t,\bx) =   \frac{1}{4\pi} \int_{\Omega(t)} \rrho (t, \by) \frac{  \delta_{kj}(x^k-y^k)}{|\bx-\by|^3}d^3\by
	\end{equation}
	for any $(t, \bx) \in [0,T) \times   \Rbb^3 $.

Let us estimate $|\del{i}  \Phi|$. Since $\rrho \in  \mathrm{R}^{1}([0, T)\times\Omega(t))$, by Definition \ref{t:diffdis}, there are constants  $\delta\in(0,1)$, such that $G_{1}= \frac{1}{ 4\pi}\bigl(\frac{  M  }{\delta^2} + 3M\bigr)$ and
for any  $t\in[0,T)$, every vector $\bx \in\del{}\Omega(t)$ and $\mathbf{r}\in B(\mathbf{n},\delta)$ where  $\mathbf{n}:=\bx/|\bx|$, let us calculate\footnote{Note the fact that for $k<3$ and $R>0$ first by letting $ \omega:= \by-\bx $ and applying the spherical coordinate $\omega^1=r\sin \theta \cos \phi$, $\omega^2=r\sin \theta \sin \phi$ and $\omega^3=r\cos \theta$,
	\begin{align}\label{e:spintg}
		\int_{B(\bx, R)}   \frac{1}{|\bx -\by  |^k} d^3 \by  =&\int_{|\omega|<R }   \frac{1}{|\omega |^k} d^3 \omega = 2\pi \int^\pi_0   \int^R_0   \frac{1}{r^k} r^2\sin  \theta dr d\theta = \frac{4\pi R^{3-k}}{3-k}.
\end{align}	}
\begin{align*}\label{e:regdef}
\int_{B(\mathbf{n},\delta)}\frac{ |\bx|^{3} \rrho(t,|\bx|\br )}{|\mathbf{n}-\mathbf{r}|^{2}}d^3\mathbf{r}<3M.
\end{align*}	
Then, using above bound, we arrive at 	for any $(t, \bx) \in [0,T) \times  \del{}\Omega(t) $,
\begin{align*}
	\frac{1}{4\pi} \int_{\Omega(t)\cap B(\bx,\delta|\bx|)}  \frac{ \rrho (t, \by)}{|\bx -\by |^2}d^3\by \overset{\by:=|\bx|\br}{\longeq} & \frac{1}{4\pi} \int_{B(\mathbf{n},\delta)\cap \{\br\;|\;|\bx|\br\in\Omega(t) \}}  \frac{\rrho (t, |\bx|\br)|\bx|^3 }{\bigl|\bx -|\bx|\br \bigr|^2} d^3\br  \notag  \\
	\leq &\frac{1}{4\pi|\bx|^2} \int_{B(\mathbf{n},\delta)}  \frac{|\bx|^3 \rrho (t, |\bx|\br) }{|\mathbf{n} -\br |^2} d^3\br
	<  \frac{3M}{4\pi |\bx|^2}.
\end{align*}

Note for every $\bx\in \del{}\Omega(t)$ and $\by \in \Omega(t)\setminus B(\bx,\delta|\bx|)$, we obtain $|\bx-\by|\geq \delta|\bx|$. Further, for every $(t,\bx)\in [0,T)\times \del{}\Omega(t)$, we have
\begin{align*}
|\del{j} \Phi(t,\bx)|\leq  & \frac{1}{4\pi} \Bigl|\int_{\Omega(t)\setminus B(\bx,\delta|\bx|)}  \rrho (t, \by) \frac{  \delta_{kj}(x^k-y^k)}{|\bx -\by |^3}d^3\by\Bigr|  \notag  \\
& +\frac{1}{4\pi} \Bigl|  \int_{\Omega(t)\cap B(\bx,\delta|\bx|)}  \rrho (t, \by) \frac{  \delta_{kj}(x^k-y^k)}{|\bx -\by |^3}d^3\by  \Bigr|  \notag \\ \leq & \frac{1}{4\pi} \int_{\Omega(t)\setminus B(\bx,\delta|\bx|)}   \frac{\rrho (t, \by)}{|\bx -\by |^2}d^3\by  + \frac{1}{4\pi} \int_{\Omega(t)\cap B(\bx,\delta|\bx|)}  \frac{\rrho (t, \by)}{|\bx -\by |^2}d^3\by \notag \\
\leq & \frac{1}{4\pi\delta^2}\frac{1}{|\bx |^2} \int_{\Rbb^3}  \rrho (t, \by) d^3\by+ \frac{3M}{ 4\pi |\bx|^2} v
\leq v\Bigl( \frac{  M  }{\delta^2} + 3M\Bigr) \frac{1}{4\pi|\bx|^2}<  \frac{G_1}{|\bx|^2}.
\end{align*}
Then we complete this proof.
\end{proof}

\subsection{Conservations of velocity of center of mass, mass and energy}\label{s:consv}
In order to simplify the calculations, we take a specific coordinate, the \textit{center of mass frame}\footnote{The \textit{center of mass frame} is an \textit{inertial} frame in which the center of mass of a system is \textit{at rest} with respect to the origin of the coordinate system.}, and let the center of mass locate at the origin of the coordinate system. The following Lemma \ref{t:ctoms} guarantees such a coordinate system is inertial and thus exists, and this lemma also indicates that the mass and total energy are conserved quantities.
Let us first define the \textit{center of mass} by
\begin{equation}\label{e:ctoms}
x^i_c:=x^i_c(t)=\frac{1}{ M } \int_{\Rbb^3} \rrho x^i d^3\bx
\end{equation}
where we recall $ M $, $E$ are the \textit{total mass}
and \textit{total energy} given by \eqref{e:mass} and \eqref{e:eng0}.

Let us first present Lemma \ref{t:iden} which provides two useful identities for the conservation laws in Lemma \ref{t:ctoms} and the later proof. We state the following Lemmas \ref{t:iden} and \ref{t:ctoms} here with the proof given in Appendix \ref{a:cons}.
\begin{lemma}\label{t:iden}
	There are following identities for $\rrho \in C^1([0,\infty)\times \Rbb^3)$ with the precompact support $\Omega(t)$ and $ \Phi(t,\bx)$ defined by \eqref{e:Newpot},
	\begin{gather}
	\int_{\Omega(t)}   \bigl( \rrho(t,\bx) \del{ }^k \Phi (t,\bx)  \bigr) d^3\bx =0  \label{e:rphin0}  \intertext{and}
	\int_{\Omega(t)}  \rrho(t,\bx)  \del{j} \Phi(t,\bx)  x^j d^3\bx = -\frac{1}{2}\int_{\Omega(t)}  \rrho (t,\bx)   \Phi(t,\bx)  d^3\bx. \label{e:rpx3}
	\end{gather}
\end{lemma}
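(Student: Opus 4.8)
The plan is to use the Poisson equation $\Delta\Phi = \rrho$ (equation \eqref{e:NEul3}) to recast both integrands, at each fixed $t$, as spatial divergences — up to one extra quadratic term $|\nabla\Phi|^2$ in the second identity — then integrate over a large ball $B_R(0)\supset\overline{\Omega(t)}$, apply the divergence theorem, and send $R\to\infty$. The boundary fluxes will vanish thanks to the standard Newtonian far-field decay: since $\Omega(t)$ is precompact, for $|\bx|\geq 2\sup_{\by\in\Omega(t)}|\by|$ one has $|\bx-\by|\geq\tfrac12|\bx|$ for all $\by\in\Omega(t)$, so \eqref{e:Newpot} gives $|\Phi(t,\bx)|\leq\tfrac{M}{2\pi|\bx|}$, and differentiating \eqref{e:Newpot} under the integral sign gives $|\del{j}\Phi(t,\bx)|\leq\tfrac{M}{\pi|\bx|^2}$. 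As $\rrho\in C^1$ is compactly supported, elliptic regularity makes $\Phi$ a $C^2$ function, so every integration by parts below is classically justified.

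For \eqref{e:rphin0} I would start from the algebraic identity
\begin{equation*}
	\rrho\,\del{}^k\Phi = (\Delta\Phi)\,\del{}^k\Phi = \del{l}\Bigl(\del{}^l\Phi\,\del{}^k\Phi - \tfrac{1}{2}\,\delta^{lk}\,|\nabla\Phi|^2\Bigr),
\end{equation*}
the divergence of the Newtonian gravitational stress tensor. Integrating over $B_R(0)$ and applying the divergence theorem turns $\int_{B_R(0)}\rrho\,\del{}^k\Phi\,d^3\bx$ into a flux over $\partial B_R(0)$ bounded in modulus by $C\,\|\nabla\Phi\|_{L^\infty(\partial B_R)}^2\,R^2 = O(R^{-2})$. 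Since $\rrho$ vanishes outside $\Omega(t)$, the left-hand integral equals $\int_{\Omega(t)}\rrho\,\del{}^k\Phi\,d^3\bx$ once $R$ is large, and letting $R\to\infty$ yields \eqref{e:rphin0}.

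For \eqref{e:rpx3}, using $\del{j}x^j = 3$ and $\del{}^l\Phi\,\del{l}\del{j}\Phi = \tfrac12\del{j}|\nabla\Phi|^2$, the corresponding identity is
\begin{equation*}
	\rrho\,\del{j}\Phi\, x^j = (\Delta\Phi)\,\del{j}\Phi\, x^j = \del{l}\bigl(\del{}^l\Phi\,\del{j}\Phi\, x^j\bigr) - \tfrac{1}{2}\del{j}\bigl(|\nabla\Phi|^2 x^j\bigr) + \tfrac{1}{2}|\nabla\Phi|^2 .
\end{equation*}
Integration over $B_R(0)$ produces two surface terms, each of size $O(R^2\cdot R\cdot R^{-4}) = O(R^{-1})$ by the decay bounds, so letting $R\to\infty$ gives $\int_{\Omega(t)}\rrho\,\del{j}\Phi\, x^j\,d^3\bx = \tfrac12\int_{\Rbb^3}|\nabla\Phi|^2\,d^3\bx$. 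A final integration by parts, $\int_{\Rbb^3}|\nabla\Phi|^2\,d^3\bx = -\int_{\Rbb^3}\Phi\,\Delta\Phi\,d^3\bx = -\int_{\Omega(t)}\Phi\,\rrho\,d^3\bx$ (the boundary term $\int_{\partial B_R(0)}\Phi\,\nabla\Phi\cdot\mathbf{n}\,dS$ being $O(R^{-1})$), then gives \eqref{e:rpx3}.

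I do not expect a real obstacle here. The only point requiring care is the far-field decay of $\Phi$ and $\nabla\Phi$ used to discard every boundary flux, and this is immediate from \eqref{e:Newpot} once $R$ exceeds twice the radius of $\Omega(t)$; everything else is bookkeeping. Conceptually, \eqref{e:rphin0} expresses that an isolated Newtonian system exerts no net gravitational self-force, while \eqref{e:rpx3} is the Newtonian virial identity tying the self-gravity contribution to the gravitational potential energy appearing in $E$.
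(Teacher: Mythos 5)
Your argument is correct, but it follows a genuinely different route from the paper's. The paper never invokes the Poisson equation or the divergence theorem: it works directly with the kernel representation $\del{j}\Phi(t,\bx)=\frac{1}{4\pi}\int_{\Omega(t)}\rrho(t,\by)\,\delta_{kj}(x^k-y^k)|\bx-\by|^{-3}d^3\by$ (its Lemma on differentiating the Newtonian potential), writes each left-hand side as a double integral over $\Omega(t)\times\Omega(t)$, and exploits the antisymmetry of the kernel under $\bx\leftrightarrow\by$ via Fubini--Tonelli: for \eqref{e:rphin0} the integral equals its own negative, and for \eqref{e:rpx3} splitting $x^j=(x^j-y^j)+y^j$ and symmetrizing gives $I=-\int\rrho\Phi - I$. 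You instead use $\Delta\Phi=\rrho$ to write $\rrho\,\del{}^k\Phi$ and $\rrho\,\del{j}\Phi\,x^j$ as divergences of the gravitational stress tensor (plus $\tfrac12|\nabla\Phi|^2$ in the virial case), integrate over large balls, and kill the fluxes with the $O(|\bx|^{-1})$, $O(|\bx|^{-2})$ far-field decay; the algebraic identities, the decay bounds, and the final integration by parts $\int|\nabla\Phi|^2=-\int\rrho\Phi$ all check out, and the $C^2$ regularity of $\Phi$ (from $\rrho\in C^1$ compactly supported, i.e.\ the paper's Lemma \ref{t:ddphi}) justifies the classical divergence theorem. The trade-off: the paper's symmetrization is self-contained and needs only the first-derivative representation formula with no regularity beyond what makes the double integral absolutely convergent, whereas your route needs $\Phi\in C^2$ and the decay estimates but is more structural --- it identifies \eqref{e:rphin0} as vanishing of the total self-force (divergence of the stress tensor) and \eqref{e:rpx3} as the virial identity $\int\rrho\,\del{j}\Phi\,x^j=\tfrac12\int_{\Rbb^3}|\nabla\Phi|^2$, which also exhibits the gravitational energy as manifestly signed, a fact invisible in the paper's computation.
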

Then, the conservation laws are given in the following lemma,
\begin{lemma}\label{t:ctoms}
	Suppose $(\rrho, \mathring{w}^i)$ solves the Euler--Poisson equations \eqref{e:NEul1}--\eqref{e:inidata} on $[0,T)\times\Omega(t)$, the center of mass $x^i_c$, the total mass $ M $ and the total energy $ E $ are defined by \eqref{e:ctoms}--\eqref{e:eng0}. Then this velocity of the center of mass $\frac{d x^k_c(t)}{dt}$, the total mass $ M (t)$ and the total energy $ E (t)$ are conserved.
	Moreover, the velocity of the center of mass can be expressed by
	\begin{equation}\label{e:vlcm}
	\frac{d x^k_c(t)}{dt}=\frac{1}{ M } \int_{\Rbb^3} \rrho\mathring{w}^k d^3\bx=\text{const.}
	\end{equation}
for $t\in[0,T)$.
\end{lemma}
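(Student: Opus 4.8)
The plan is to differentiate each of the quantities $M(t)$, $\tfrac{d}{dt}x^k_c(t)$ and $E(t)$ in time, use the continuity and momentum equations \eqref{e:NEul1}--\eqref{e:NEul2} to trade every time derivative of $\rrho$ or $\rw^i$ for a spatial divergence, and then note that all the resulting surface integrals over $\del{}\Omega(t)$ vanish. The structural fact driving this is that $\rrho\in C^1(\Rbb^3,\Rbb_{\geq0})$ attains its minimal value $0$ along $\del{}\Omega(t)$, so $\rrho=0$ there; hence $p=K\rrho^\gamma=0$ and the internal energy density $\tfrac{p}{\gamma-1}=0$, and all the flux densities $\rrho\rw^j\rw^k$, $\rrho|\rw^i|^2\rw^j$, $\tfrac{p}{\gamma-1}\rw^j$ and $\rrho\Phi\rw^j$ vanish on $\del{}\Omega(t)$ as well (here $\Phi$ is bounded and continuous, and by the $C^1$-extension in Definition \ref{t:clsl} the velocity is bounded up to the boundary). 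I would also record at the outset that, solving the transport equation \eqref{e:NEul1} along the flow $\chi$ of $\mathbf{w}$, one has $\underline{\rrho}(t,\xi)=\rrho_0(\xi)\exp\!\bigl(-\int_0^t(\del{i}w^i)(\chi(s,\xi))\,ds\bigr)$, so $\Omega(t)=\chi(t,\Omega(0))$ is carried by the velocity field; Reynolds' transport theorem then applies with boundary speed $\mathbf{w}\cdot\mathbf{n}$, and each boundary term it produces is annihilated by $\rrho|_{\del{}\Omega(t)}=0$.

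First I would handle the mass: $\tfrac{dM}{dt}=\int_{\Omega(t)}\del{0}\rrho\,d^3\bx=-\int_{\Omega(t)}\del{i}(\rrho\rw^i)\,d^3\bx=0$ by \eqref{e:NEul1} and the divergence theorem, so $M(t)\equiv M$. Next, using \eqref{e:NEul1} and integration by parts, $\tfrac{d}{dt}(M x^k_c)=\int_{\Rbb^3}\del{0}\rrho\,x^k\,d^3\bx=-\int_{\Rbb^3}\del{j}(\rrho\rw^j)\,x^k\,d^3\bx=\int_{\Rbb^3}\rrho\rw^k\,d^3\bx$, which together with $M(t)\equiv M$ gives the formula \eqref{e:vlcm}. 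To see this integral is itself time-independent, I would combine \eqref{e:NEul1} and \eqref{e:NEul2} into $\del{0}(\rrho\rw^k)=-\del{j}(\rrho\rw^j\rw^k)-\delta^{jk}\del{j}p-\rrho\del{}^k\Phi$, integrate over $\Rbb^3$, discard the two divergence terms (their boundary fluxes vanish since $\rrho=p=0$ on $\del{}\Omega(t)$), and kill the gravitational term by identity \eqref{e:rphin0} of Lemma \ref{t:iden}; hence $\tfrac{d}{dt}\int_{\Rbb^3}\rrho\rw^k\,d^3\bx=0$, so $\tfrac{d}{dt}x^k_c$ is a constant vector and the center-of-mass frame is inertial.

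For the energy I would differentiate the three terms in \eqref{e:eng0} one at a time. The kinetic term yields, after \eqref{e:NEul1}--\eqref{e:NEul2} and one integration by parts (legitimate since $p=0$ on $\del{}\Omega(t)$), $\tfrac{d}{dt}\int\tfrac12\rrho|\rw^i|^2\,d^3\bx=\int p\,\del{k}\rw^k\,d^3\bx-\int\rrho\rw^k\del{k}\Phi\,d^3\bx$. For the internal term, writing $e:=\tfrac{K}{\gamma-1}\rrho^{\gamma-1}$ so that $\tfrac{p}{\gamma-1}=\rrho e$, I would use $\del{0}(\rrho e)+\del{j}(\rrho e\,\rw^j)=\rrho D_te$ (a consequence of \eqref{e:NEul1}) together with $\rrho D_te=-p\,\del{i}\rw^i$ (from $D_t\rrho=-\rrho\del{i}\rw^i$ and $p=K\rrho^\gamma$) to obtain $\tfrac{d}{dt}\int\tfrac{p}{\gamma-1}\,d^3\bx=-\int p\,\del{i}\rw^i\,d^3\bx$. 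For the potential term, the symmetry of the kernel $|\bx-\by|^{-1}$ in \eqref{e:Newpot} gives $\int\rrho\,\del{0}\Phi\,d^3\bx=\int\del{0}\rrho\,\Phi\,d^3\bx$, whence $\tfrac{d}{dt}\int\tfrac12\rrho\Phi\,d^3\bx=\int\del{0}\rrho\,\Phi\,d^3\bx=\int\rrho\rw^k\del{k}\Phi\,d^3\bx$ by \eqref{e:NEul1}. Adding the three, the $\int p\,\del{k}\rw^k$ contributions cancel and the $\int\rrho\rw^k\del{k}\Phi$ contributions cancel, leaving $\tfrac{dE}{dt}=0$.

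The identities above are routine; the only genuine care lies in the justification layer, and this is where I expect the (mild) obstacle to be. One must differentiate under the integral across the moving domain $\Omega(t)$ and interchange the $\bx$- and $\by$-integrations in the potential computation; both are licensed by the $C^1$ regularity of $(\rrho,\rw^i,\Phi)$ together with the fact that $\Omega(t)=\chi(t,\Omega(0))$ stays inside a fixed compact set on every compact subinterval of $[0,T)$ (the velocity field being locally bounded). The cancellation of every boundary flux is then exactly the assertion $\rrho|_{\del{}\Omega(t)}=0$ recorded in the first paragraph. Finally, I would note that the diffuse boundary condition \eqref{e:ffbdry0} is never invoked in this argument: the conservation laws rely only on the equations holding in $\Omega(t)$ and on the continuity of $\rrho$ up to $\del{}\Omega(t)$.
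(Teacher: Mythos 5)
Your proposal is correct, and for the one claim the paper actually proves in detail --- constancy of $\tfrac{d x^k_c}{dt}$ --- you take exactly the paper's route: derive the formula \eqref{e:vlcm} from the continuity equation, then combine \eqref{e:NEul1}--\eqref{e:NEul2} into the momentum-conservation form (the paper's \eqref{e:smeuler}), integrate, discard the divergence/pressure fluxes via $\rrho=p=0$ on $\del{}\Omega(t)$, and kill the gravitational term with identity \eqref{e:rphin0} of Lemma \ref{t:iden}.

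The only difference is one of completeness rather than method: the paper explicitly omits the proofs of mass and energy conservation as standard, whereas you supply them --- mass via the divergence theorem, and energy via the kinetic/internal/potential bookkeeping in which the $\int p\,\del{k}\rw^k$ terms cancel and the Fubini symmetry of the kernel in \eqref{e:Newpot} converts $\int\rrho\,\del{0}\Phi$ into $\int\del{0}\rrho\,\Phi$ so the gravitational work terms cancel as well. These computations are correct (and consistent with how the paper itself handles boundary terms, e.g.\ in the proof of Theorem \ref{t:bluplem}), so your write-up is a more self-contained version of the same argument rather than a different one.
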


Lemma \ref{t:ctoms} demonstrates that the center of mass of this molecular cloud is moving in a constant velocity \eqref{e:vlcm}. Therefore, we are able to set the center of mass to be the origin of the coordinate system and make sure this system is inertial.
From now on, we set the center of mass $x^i_c=0$ (i.e. the origin is situated at the center of the mass of the molecular cloud), and therefore, the definition \eqref{e:ctoms} implies
\begin{align*}
\int_{\Rbb^3}\rrho x^i d^3\bx=0.
\end{align*}

\subsection{Criterion of blowups of classical solutions and virial theorem}\label{s:criterion}

In this section, we are in the position to state the evolution theorem of classical solutions of the diffuse boundary problem. This theorem is closely related to the Makino's conjecture in \cite{Makino1992} that states \textit{any tame
solution, including nonsymmetric tame solution, will become not tame after a finite time}. The Theorem \ref{t:blupthm0} presented later in fact answers Makino's conjecture in a perspective for classical solutions of the diffuse boundary problem.
We firstly give a lemma stating if the expanding rate of the clouds is close to certain linear expanding rate, then the solution can only be extended to finite time (controlling the expanding rate implies blowups).
This Theorem \ref{t:bluplem} is in fact one expression of the virial theorem. The idea of Theorem \ref{t:bluplem} comes from Sideris \cite{Sideris1985} and then is applied by many authors (for instance, \cite{Brauer1998,Liu1996,Pan2005}, etc.).

\begin{theorem}[Criterion of Blowups, Virial Theorem]\label{t:bluplem}
	Suppose $(\rrho,\rw^i,\Omega(t))$ is a classical solution for $t\in [0,T) $ to Euler--Poisson equations \eqref{e:NEul1}--\eqref{e:inidata}, $\beta :=\min\{3(\gamma-1),1\}>0$, $ M >0$ and $ E $ are mass and energy defined by \eqref{e:mass} and \eqref{e:eng0}, respectively, the total energy $E$ of this system is positive\footnote{It means the averaging trend of the clouds is expansion, since this, by \eqref{e:xiecomp1} in the following proof, leads to the time derivative of virial is positive, i.e., $ H^{\prime\prime} (t)>0$. See \cite{Krumholz2017} for more explanations. }, and for every $t\in[0,T)$, if there is a scalar function $R(t)\in C^0([0,T))$ such that the support of the density $\Omega(t)=\supp\rrho(t,\cdot) \subset B_{R(t)}(0)$. Then,
	\begin{enumerate}
		\item if $T=\infty$, then
		$R(t)$ satisfies
		\begin{equation}\label{e:limcri0}
			\lim_{t\rightarrow T}  \frac{R^2(t)}{t^2}  \geq \frac{\beta  E }{ M } .
		\end{equation}
	Otherwise, if \eqref{e:limcri0} fails and denote $[0,T_\text{max})$ is a maximal interval of existence of this solution, then $T_\text{max}<\infty$ is finite.
		\item if $T<\infty$, then
		 $R(t)$ satisfies
		\begin{equation}\label{e:limcri}
			\lim_{t\rightarrow T}  R^2(t)  \geq  \frac{\beta  E T^2}{ M }+\frac{2H^\prime(0) T}{M }+\frac{2H(0)}{M},
		\end{equation}
	Otherwise, if \eqref{e:limcri} fails, then $T_\text{max}<T<\infty$.
	\end{enumerate}
\end{theorem}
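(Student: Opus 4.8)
The plan is to carry out the classical virial (Sideris-type) computation for the moment of inertia $H(t)$, adapted to the diffuse boundary. First I would verify that $H'(t)$ as defined in \eqref{e:xi0} really equals $\tfrac{d}{dt}H(t)$ — this is \eqref{e:dtH} — by differentiating $H(t)=\tfrac12\int_{\Rbb^3}\rrho|\bx|^2\,d^3\bx$ in time, using the continuity equation \eqref{e:NEul1} to rewrite $\del{0}\rrho$, and integrating by parts in space; the boundary terms vanish since $\rrho\in C^1(\Rbb^3)$ has precompact support (and $\rrho,p\to0$ on $\del{}\Omega(t)$, so $p$ also contributes no boundary term later). Differentiating once more, using the momentum equation \eqref{e:NEul2} for $\del{0}(\rrho\rw^i)$, the identity $\del{k}x^k=3$ for the pressure term, and the Pohozaev-type identity \eqref{e:rpx3} of Lemma \ref{t:iden} for the gravitational term, I would arrive at
\[
H''(t)=\int_{\Rbb^3}\rrho\,|\rw^i|^2\,d^3\bx+3\int_{\Rbb^3}p\,d^3\bx+\tfrac12\int_{\Rbb^3}\rrho\,\rPhi\,d^3\bx.
\]

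The crux is the differential inequality $H''(t)\ge\beta E$. Write $2\mathcal K:=\int_{\Rbb^3}\rrho|\rw^i|^2$, $\mathcal I:=\int_{\Rbb^3}\tfrac{p}{\gamma-1}$ (so $3\int p=3(\gamma-1)\mathcal I$) and $\mathcal W:=\tfrac12\int_{\Rbb^3}\rrho\rPhi\le0$ for the kinetic, internal and (nonpositive) Newtonian self-energies; by conservation of energy (Lemma \ref{t:ctoms}) $E=\mathcal K+\mathcal I+\mathcal W$, so $H''=2\mathcal K+3(\gamma-1)\mathcal I+\mathcal W$. When $\gamma\ge\tfrac43$ we have $\beta=1$ and it is enough to decrease the coefficients of the nonnegative terms $\mathcal K,\mathcal I$ down to $1$ (legitimate since $3(\gamma-1)\ge1$), giving $H''\ge\mathcal K+\mathcal I+\mathcal W=E=\beta E>0$. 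When $1<\gamma<\tfrac43$ we have $\beta=3(\gamma-1)\in(0,1)$ and $H''-\beta E=(2-\beta)\mathcal K+(1-\beta)\mathcal W$, so the negative self-energy has to be dominated; here I would combine $E>0$ (equivalently $-\mathcal W<\mathcal K+\mathcal I$) with an upper bound on $-\mathcal W=\tfrac{1}{8\pi}\iint\frac{\rrho(t,\bx)\rrho(t,\by)}{|\bx-\by|}\,d^3\bx\,d^3\by$ — e.g.\ a Hardy--Littlewood--Sobolev / interpolation estimate in terms of $\|\rrho(t)\|_{L^\gamma}$, hence of $\mathcal I$ — and absorb the surplus into the positive terms. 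I expect this last point to be the principal obstacle.

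Granting $H''(t)\ge\beta E$ for all $t\in[0,T)$, I would integrate twice from $0$: $H'(t)\ge H'(0)+\beta E\,t$ and then $H(t)\ge H(0)+H'(0)\,t+\tfrac12\beta E\,t^2$. Since $\Omega(t)=\supp\rrho(t,\cdot)\subset B_{R(t)}(0)$, we also have $H(t)=\tfrac12\int_{\Rbb^3}\rrho|\bx|^2\le\tfrac12 R^2(t)\int_{\Rbb^3}\rrho=\tfrac12 M\,R^2(t)$, whence
\[
R^2(t)\ \ge\ \frac{\beta E}{M}\,t^2+\frac{2H'(0)}{M}\,t+\frac{2H(0)}{M}\qquad\text{for }t\in[0,T).
\]
Passing to the limit $t\to T$ gives both parts: for $T=\infty$, divide by $t^2$ and take the limit inferior to obtain \eqref{e:limcri0}; for $T<\infty$, take the limit inferior directly to obtain \eqref{e:limcri}. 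The two "Otherwise" clauses follow by contraposition — if \eqref{e:limcri0} fails along the maximal interval then $T_\text{max}=\infty$ is impossible, so $T_\text{max}<\infty$; and since the displayed bound on $R^2$ is necessary for the solution to survive on all of $[0,T)$, failure of \eqref{e:limcri} forces $T_\text{max}<T<\infty$. Apart from the $\gamma<\tfrac43$ estimate, the only further care needed is the rigorous justification of the two spatial integrations by parts (vanishing of the boundary contributions) at the prescribed $C^1$ regularity with a moving diffuse interface.
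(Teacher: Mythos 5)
Your plan is the same virial/Sideris argument the paper uses: the paper obtains \eqref{e:dtH} via the Reynolds transport theorem (the boundary term vanishing because $\rrho\equiv 0$ on $\del{}\Omega(t)$), differentiates once more using the combined equation \eqref{e:smeuler} and the identity \eqref{e:rpx3}, reaches exactly your expression $H''(t)=2\mathcal K+3(\gamma-1)\mathcal I+\mathcal W$ in \eqref{e:xiecomp1}, and then, just as you propose, integrates twice, compares with $H(t)\le\tfrac12 MR^2(t)$ (see \eqref{e:xilow}--\eqref{e:ctreq}), and handles the two ``Otherwise'' clauses by contradiction/contraposition. All of that part of your proposal is correct and matches the paper step for step.

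The genuine gap is the step you flag yourself: the inequality $H''(t)\ge\beta E$ when $1<\gamma<\tfrac43$. Your coefficient-dropping argument settles $\gamma\ge\tfrac43$ (where $\beta=1$), but for $\beta=3(\gamma-1)<1$ your own identity $H''-\beta E=(2-\beta)\mathcal K+(1-\beta)\mathcal W$ shows the inequality is \emph{not} a consequence of $\mathcal K,\mathcal I\ge 0\ge\mathcal W$ and $E>0$ alone: with $\beta=0.3$, the values $\mathcal K=0$, $\mathcal I=2$, $\mathcal W=-1$ give $E=1>0$ but $2\mathcal K+\beta\mathcal I+\mathcal W=-0.4<\beta E$. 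So some further input about the solution is indispensable, and the patch you sketch does not supply it: the Hardy--Littlewood--Sobolev plus interpolation route bounds $-\mathcal W$ by (a constant depending on $M$ times) $\mathcal I^{1/(3(\gamma-1))}$, which is a \emph{superlinear} power of $\mathcal I$ precisely when $\gamma<\tfrac43$ (and for $1<\gamma<\tfrac65$ the interpolation is not even available), so it cannot be absorbed into the positive terms. As it stands your argument therefore proves the theorem only for $\gamma\ge\tfrac43$. You should know, however, that the paper's own proof contains nothing more at this point: in \eqref{e:xiecomp1} the bound ``$\ge\beta E$'' is simply asserted without justification, so you have not missed an idea that the paper supplies --- you have isolated a step that the paper leaves unsupported for $1<\gamma<\tfrac43$; closing it would require either restricting $\gamma$, imposing extra conditions on the data relating $\mathcal K$, $\mathcal I$ and $\mathcal W$, or a genuinely different estimate of the potential energy.
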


\begin{proof}
We define the moment of inertia for $t\in[0,T)$ by 
	\begin{align}\label{e:xi0}
		 H (t):=\frac{1}{2}\int_{\Rbb^3}\rrho(t,\bx) |\bx|^2 d^3 \bx. 
	\end{align}

	On one hand, differentiate $ H (t)$ with respect to $t$ by using Reynolds transport Theorem \ref{t:RTT}, and then the boundary term vanishes due to the fact that $\rrho\equiv 0$ on $\del{}\Omega(t)$,
	we have
	\begin{align}\label{e:dtH}
		\frac{d}{dt} H (t)=&\frac{1}{2}\int_{\Omega(t)}\del{t}\rrho |\bx|^2 d^3\bx
		= -\frac{1}{2}\int_{\Omega(t)} \del{i}( \mathring{w}^i  \rrho)  |\bx|^2 d^3\bx
		=   \int_{\Omega(t)}    \rrho  \mathring{w}^i x^j  \delta_{ij} d^3\bx
	\end{align}
	for $t\in[0,T)$. This quantity $\int_{\Omega(t)}    \rrho  \mathring{w}^i x^j  d^3\bx$ is also known as \textit{virial tensor} in physics.
	
	Then differentiate above $\frac{d}{dt} H (t)$ again with respect to $t$, with the help of \eqref{e:smeuler} and \eqref{e:rpx3} from Lemma \ref{t:iden}, we arrive at one expression of the famous \textit{Virial Theorem},
	\begin{align}\label{e:xiecomp1}
		\frac{d^2}{dt^2}  H (t) = & \int_{\Omega(t)}  \del{t}(  \rrho  \mathring{w}^k) x^j  \delta_{kj} d^3\bx    
		=   \int_{\Omega(t)}  \bigl[-\del{i} (\rrho \mathring{w}^i\mathring{w}^k)-\delta^{ik} \del{i}  p   - \rrho \del{}^k \Phi\bigr] x^j  \delta_{kj} d^3\bx   \notag  \\	
		&\hspace{-1.5cm} =   \int_{\Omega(t)}  -\del{i} (\rrho \mathring{w}^i\mathring{w}^k)x^j  \delta_{kj} d^3\bx-\int_{\Omega(t)} \delta^{ik} \del{i}  p  x^j  \delta_{kj} d^3\bx -\int_{\Omega(t)}  \rrho \del{j} \Phi  x^j d^3\bx   \notag  \\	
		&\hspace{-1.5cm} =  \int_{\Omega(t)}   \rrho \mathring{w}^i\mathring{w}^j    \delta_{ij} d^3\bx + 3\int_{\Omega(t)}    p  d^3\bx + \frac{1}{2}\int_{\Omega(t)}  \rrho   \Phi d^3\bx   \geq \beta  E  >0
	\end{align}
	for $t\in[0,T )$
	where $\beta :=\min\{3(\gamma-1), 1\}>0$.
	Then, integrating \eqref{e:xiecomp1} yields,	for $t\in[0,T )$,
	\begin{align}\label{e:xilow}
		 H  (t) \geq \frac{1}{2}\beta E  t^2 + H  ^\prime (0) t+ H  (0). 
	\end{align}

	On the other hand, $\bx\in\Omega(t)\subset B_{R(t)}(0)$ and \eqref{e:xi0} leads to, 
	for $t\in[0,T )$, 
	\begin{align}\label{e:xiupp}
		 H (t) =\frac{1}{2}\int_{\Rbb^3}\rrho |\bx|^2 d^3\bx=\frac{1}{2}\int_{\Omega(t)}\rrho |\bx|^2 d^3\bx \leq \frac{1}{2}R^2(t)\int_{\Rbb^3}\rrho  d^3\bx=\frac{1}{2}R^2(t) M. 
	\end{align}
	
	Combining \eqref{e:xilow} with \eqref{e:xiupp} results that
	\begin{equation}\label{e:ctreq}
		F(t):=\frac{1}{2} \bigl[\beta E  t^2 -   M  R^2(t)\bigr] + H  ^\prime (0) t+ H  (0) \leq 0,
	\end{equation}
	for any $t\in[0,T )$.
	Note that $F(t)$ is a continuous function in $[0,T )$ and directly, \eqref{e:xiupp} implies, initially,
	\begin{equation}\label{e:f0}
		F(0)= H  (0)-  \frac{1}{2}  M  R^2(0) \leq 0.
	\end{equation}
$ E (0)>0$, $\beta>0$ and  \eqref{e:ctreq} yield
	\begin{equation}\label{e:limF}
		\lim_{ t \rightarrow T} \frac{2F(t)}{ M  R^2(t)}
		= 	\lim_{ t\rightarrow T } \Bigl(\frac{\beta E  t^2}{ M R^2(t)}+\frac{2 H ^\prime(0) t}{ M  t^2}\frac{t^2}{R^2(t)} +\frac{2 H (0)}{ M t^2}\frac{t^2}{ R^2(t)}\Bigr)-1 \leq 0.  
	\end{equation}
If $T=\infty$, \eqref{e:limF} implies \eqref{e:limcri0}, and if $T<\infty$, leads to \eqref{e:limcri}.

On the contrary, for $T<\infty$ and
\begin{equation*}
	\lim_{t\rightarrow T}  R^2(t)  < \frac{\beta  E T^2}{ M }+\frac{2H^\prime(0) T}{M }+\frac{2H(0)}{M},
\end{equation*}
by \eqref{e:limF}, we arrive at $\lim_{t\rightarrow T}F(t)>0$  (it is similar for $T=\infty$ to obtain $\lim_{t\rightarrow T}F(t)>0$). With the help of \eqref{e:f0} and the intermediate value theorem, there exists a time $T_0>0$ such that for $t\geq T_0$, $F(t)> 0$, which contradicts with the inequality \eqref{e:ctreq} for $t\in[0,T )$. This contradiction concludes Theorem \ref{t:bluplem}.
\end{proof}

If $R(t):=2A (t+a)$,  we obtain the following Corollary \ref{e:supcri}, and this corollary will be used in the next subsection \S\ref{s:traj}.
\begin{corollary}\label{e:supcri}
	Under assumptions of Theorem \ref{t:bluplem}, if further assuming
	\begin{align*}
		R(t):=2A (t+a)
	\end{align*}
where $A<\frac{1}{24}\sqrt{\frac{\beta E}{M}}$,
then there is a critical time, $T^\dag>0$ given by
\begin{align*}
	T^\dag:=\frac{4aM A^2-H^\prime(0)+\sqrt{(H^\prime(0)-4aMA^2)^2-2(\beta E-4MA^2)(H(0)-2MA^2a^2)}}{\beta E- 4 A^2 M}
\end{align*}
such that $F(t)>0$ for $t>T^\dag>0$. Furthermore, the maximal interval of the existence of the solution $T_\text{max} \leq T^\dag$.
In specific, if $a:=\sigma^{-1}$ ($\sigma\in(0,\sigma_\star)$ and $\sigma^\star$ is defined by \eqref{e:sigma0}), there is a supercritical time $
	T^\natural:=\lambda_1 a+\lambda_2 \in(T^\dag,\frac{1}{9}a)$
where
\begin{align*}
	\lambda_1:= \frac{1}{10}	\AND \lambda_2:= \frac{9|H^\prime(0)|}{4 \beta E}<\frac{1}{200}a,
\end{align*}
such that $F(t)>0$ for $t>T^\natural>0$.
\end{corollary}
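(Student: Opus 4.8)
The plan is to insert the ansatz $R(t)=2A(t+a)$ into the auxiliary function $F$ of \eqref{e:ctreq} and exploit that it becomes a concrete quadratic in $t$. Expanding $R^2(t)=4A^2(t+a)^2$ turns $F$ into $F(t)=pt^2+qt+r$ with $p=\tfrac12\beta E-2MA^2$, $q=H'(0)-4MA^2a$ and $r=H(0)-2MA^2a^2$. The hypothesis $A<\tfrac1{24}\sqrt{\beta E/M}$ gives $2MA^2<\tfrac{\beta E}{288}<\tfrac12\beta E$, so $p>0$ and $F$ is a convex parabola with $F(t)\to+\infty$. By \eqref{e:f0}, $r=F(0)=H(0)-\tfrac12MR^2(0)\le 0$ (indeed $r<0$, since $\rrho_0$ has compact support inside $B_{R(0)}(0)$ and $M>0$, so $H(0)<\tfrac12MR^2(0)$); hence the discriminant satisfies $q^2-4pr\ge q^2\ge 0$, the roots $t_-\le t_+$ of $F$ are real, and $t_-t_+=r/p\le 0$ forces $t_-\le 0<t_+$. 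Putting $T^\dag:=t_+=(-q+\sqrt{q^2-4pr})/(2p)$ and using $2p=\beta E-4A^2M$ together with $-4pr=-2(\beta E-4MA^2)(H(0)-2MA^2a^2)$ recovers the formula for $T^\dag$ in the statement, while convexity yields $F(t)>0$ for every $t>T^\dag$, and in particular $T^\dag>0$.

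To upgrade this to $\Tx\le T^\dag$ I would argue as in the proof of Theorem \ref{t:bluplem}: the solution is classical and satisfies $\Omega(t)\subset B_{R(t)}(0)$ on the entire maximal existence interval $[0,\Tx)$, so \eqref{e:ctreq} gives $F(t)\le 0$ for all $t\in[0,\Tx)$; were $\Tx>T^\dag$, any $t\in(T^\dag,\Tx)$ would satisfy both $F(t)>0$ and $F(t)\le 0$, which is impossible.

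For the quantitative refinement, set $a=\sigma^{-1}$ with $\sigma\in(0,\sigma_\star)$. The definition \eqref{e:sigma0} of $\sigma_\star$ gives $\sigma<\beta E/(500|H'(0)|)$, i.e.\ $|H'(0)|<\beta Ea/500$, hence $\lambda_2=\tfrac{9|H'(0)|}{4\beta E}<\tfrac94\cdot\tfrac a{500}=\tfrac{9a}{2000}<\tfrac a{200}$; consequently $T^\natural=\tfrac1{10}a+\lambda_2\in\bigl(\tfrac a{10},\tfrac{21a}{200}\bigr)$, and since $\tfrac{21}{200}<\tfrac19$ also $T^\natural<\tfrac19a$. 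It then suffices to verify $F(T^\natural)>0$, for then $T^\natural$ exceeds the larger root $T^\dag$ and hence $F(t)>0$ for all $t>T^\natural$. Using $H(0)\ge 0$,
\[
F(T^\natural)\ge \tfrac12\beta E (T^\natural)^2-2MA^2(T^\natural+a)^2-|H'(0)|\,T^\natural,
\]
and I would bound the three terms from $T^\natural\in(\tfrac a{10},\tfrac{21a}{200})$, $MA^2<\tfrac{\beta E}{576}$ and $|H'(0)|<\tfrac{\beta Ea}{500}$: the first is $\ge\tfrac{\beta Ea^2}{200}$, the second is $<\tfrac2{576}\bigl(\tfrac{221}{200}\bigr)^2\beta Ea^2<\tfrac9{2000}\beta Ea^2$, and the third is $<\tfrac{21}{100000}\beta Ea^2$; since $\tfrac1{200}-\tfrac9{2000}-\tfrac{21}{100000}>0$, this gives $F(T^\natural)>0$.

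The argument is entirely elementary, and the only step that calls for genuine care is the numerical bookkeeping at the end, where the ``expansion'' contribution $\tfrac12\beta E(T^\natural)^2$ must dominate both the gravitational term $2MA^2(T^\natural+a)^2$ and the $H'(0)$-error term by a strictly positive margin; this is precisely the role of the quantitative thresholds $A<\tfrac1{24}\sqrt{\beta E/M}$ and $\sigma<\sigma_\star$, and apart from keeping all these inequalities correctly oriented I foresee no real obstacle.
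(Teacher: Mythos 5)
Your argument is correct, and for the first half it coincides with the paper's: both insert $R(t)=2A(t+a)$ into $F$ from \eqref{e:ctreq}, observe that $F$ is a quadratic with positive leading coefficient $\tfrac12(\beta E-4MA^2)$ (thanks to $A<\tfrac1{24}\sqrt{\beta E/M}$) and $F(0)\le 0$ (from \eqref{e:f0}), identify $T^\dag$ with the larger root, and conclude $T_{\max}\le T^\dag$ from the incompatibility of $F>0$ with \eqref{e:ctreq}; your contradiction is phrased directly on $[0,T_{\max})$, whereas the paper routes it through the limit criterion of Theorem \ref{t:bluplem}(2), but these are the same argument. Where you genuinely diverge is the comparison $T^\natural>T^\dag$: the paper bounds $T^\dag$ from above by estimating the square root in the root formula (the chain \eqref{e:ineq2}--\eqref{e:rt3}, using $H(0)-2MA^2a^2\le0$ and triangle-type inequalities), which is exactly what produces the constants $\lambda_1=\tfrac1{10}$ and $\lambda_2=\tfrac{9|H^\prime(0)|}{4\beta E}$; you instead evaluate $F(T^\natural)$ directly, show it is positive by the bounds $T^\natural\in[\tfrac a{10},\tfrac{21a}{200}]$, $MA^2<\tfrac{\beta E}{576}$, $|H^\prime(0)|<\tfrac{\beta Ea}{500}$, and then use convexity together with $t_-<0<T^\natural$ to conclude $T^\natural$ lies beyond the larger root. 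Your route avoids manipulating the explicit radical but requires the numerical check $\tfrac1{200}-\tfrac{2}{576}\bigl(\tfrac{221}{200}\bigr)^2-\tfrac{21}{100000}>0$, which I verified is valid; the paper's route is what motivates the specific definitions of $\lambda_1,\lambda_2$, while yours treats them as given and confirms they work. Your parenthetical strengthening $F(0)<0$ (strict, from $\Omega(0)\subset B_{R(0)}(0)$ and $M>0$) is a nice touch that cleanly justifies $T^\dag>0$, a point the paper asserts without comment.
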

\begin{proof}
	By solving inequality
	\begin{align*}
		F(t)=\frac{1}{2} \bigl[\beta E  t^2 -   4M  A^2 (t+a)^2\bigr] + H  ^\prime (0) t+ H  (0) >0,
	\end{align*}
and noting $A<\frac{1}{24}\sqrt{\frac{\beta E}{M}}$ and $F(0)\leq 0$ (see \eqref{e:f0}),
we can calculate the critical time $T^\dag>0$ such that for $t>T^\dag$, $F(t)>0$. From $F(t)>0$, we conclude, for any $T>T^\dag$,
\begin{align*}
	\lim_{t\rightarrow T}  R^2(t) = 4 A^2 (T+a)^2 < \frac{\beta  E T^2}{ M }+\frac{2H^\prime(0) T}{M }+\frac{2H(0)}{M}.
\end{align*}
Then by Theorem \ref{t:bluplem}, we arrive at $T_\text{max}<T$, i.e., $T_\text{max}\leq T^\dag$.

Next, we prove $T^\natural>T^\dag$. To achieve this, we first note that
	\begin{align}
		H(0)-2MA^2a^2 \leq &0 \label{e:ineq2}
	\end{align}
where \eqref{e:ineq2} comes from \eqref{e:f0}. Then we verify
	\begin{align}\label{e:rt1}
	&\sqrt{(H^\prime(0)-4aMA^2)^2-2(\beta E-4MA^2)(H(0)-2MA^2a^2)} \notag  \\
	\leq &  |4aMA^2-H^\prime(0)|+\sqrt{2(\beta E-4MA^2)(2MA^2a^2-H(0))}	\notag  \\
	\leq &  4aMA^2+|H^\prime(0)|+2A a\sqrt{M(\beta E-4MA^2)}. 	
	\end{align}
Since $A<\frac{1}{24}\sqrt{\frac{\beta E}{M}}$, we obtain
\begin{align}
	&\frac{8MA^2+2A\sqrt{(\beta E-4A^2 M)M}}{\beta E-4A^2 M}<\frac{2}{143}+\frac{1}{\sqrt{143}}<\frac{1}{10}=:\lambda_1, \label{e:rt2}
	\intertext{and}
	&\frac{2|H^\prime(0)|}{\beta E-4A^2 M}<\frac{288|H^\prime(0)|}{143 \beta E}< \frac{9|H^\prime(0)|}{4 \beta E}=:\lambda_2. \label{e:rt3}
\end{align}
Definition of $\sigma_\star$ \eqref{e:sigma0} implies $\lambda_2<\frac{1}{200}a^{-1}$.
Gathering \eqref{e:rt1}--\eqref{e:rt3} together, we obtain $T^\dag-T^\natural<0$, i.e., $T^\natural>T^\dag$ and it completes the proof.
\end{proof}

\subsection{Estimates of trajectories of free falling particles}\label{s:traj}

The subsequent Proposition \ref{t:spprbd} states the evolution of the boundary $\del{}\Omega(t)$ along the velocity field provided there is a classical solution,  especially expounds the dynamical estimates of the spreading rate of the boundary of the molecular clouds and the changes of the velocity field.

\begin{proposition}[Trajectories of Free Falling Particles]\label{t:spprbd}
	Suppose $T^\prime\in(0,\infty]$ is a constant,  $(\rrho,\rw^i,\Omega(t))$ is a classical solution for $t\in [0, T^\prime) $ to the diffuse problem of Euler--Poisson equations \eqref{e:NEul1}--\eqref{e:ffbdry0}, defined by Definition \ref{t:clsl}, in the center of mass frame, $ \Phi$ is given by \eqref{e:Newpot}, $ \Omega(0)$ is precompact and contains a neighbor of origin, $\rrho \in C^1([0,T^\prime)\times \Rbb^3)\cap \mathrm{R}^1([0,T^\prime)\times\Omega(t),G_1)$, $G_1$ is the gravity bound defined in Definition \ref{t:diffdis} and the given total mass and energy satisfy (i.e., $(E,M,G_1)$ is compatible)
	\begin{equation*}
		(9G_1)^{\frac{1}{3}}<\frac{1}{24}\Bigl(\frac{\beta E}{M}\Bigr)^{\frac{1}{2}}.
	\end{equation*}
Then for
any given constant $A \in \bigl((9G_1)^{\frac{1}{3}}, \frac{1}{24} \sqrt{\frac{\beta E}{ M}} \bigr)$ and any small constant $\sigma \in (0,\sigma_\star)$ where the constant $\sigma_\star$ is given by \eqref{e:sigma0}, if
	$\xi\in \del{}\Omega(0)$ lies in  \begin{equation}\label{e:ddball}
\frac{ 2  }{2- \sigma} A\sigma^{-1}<|\chi(0,\xi)|=|\xi| < \frac{1}{1-\sigma} \Bigl(1+ \frac{1}{14}\sigma\Bigr) A\sigma^{-1},
	\end{equation}
(recall \S\ref{s:lag} and see \cite[Theorem $2.15$]{Liu2021d} for the definition of $\chi$)  and the initial boundary velocity $\mathbf{w}_0|_{\del{}\Omega(0)} $ satisfies
	\begin{gather}
	 \frac{1-\sigma}{ A}  \sigma^{2}|\xi|^2 <z_0(\xi)  < \Bigl(1+ \frac{1}{14}\sigma\Bigr) \sigma |\xi|  
	 \AND
	 X_0^2(\xi) < \frac{A}{2}\sigma^{3}|\xi|.  \label{e:data2c1a}
	\end{gather}
Then, there are, by denoting  $a:=\sigma^{-1}$, estimates of the flow and velocities,
\begin{gather*}
	A (t+a)<|\chi(t,\xi) |   < \frac{A}{1-2\sigma}(t+a)\Bigl[1+ \frac{1}{7} (t+a)^{-1}\Bigr], \\
	\frac{1-2\sigma}{ A}(t+a)^{-2}|\chi(t,\xi) |^2<z(t,\xi)  <(t+a)^{-1}\Bigl[1+\frac{1}{7}(t+a)^{-1}\Bigr]|\chi(t,\xi) | , \\
	X^2(t,\xi)  <  A \sigma^{ 2} (t+a)^{-1 }|\chi(t,\xi) |.
\end{gather*}
	for any $(t,\xi)\in[0,\min\{T^\natural,T^\prime\})\times\del{}\Omega(0)$ where $T^\natural$ is given by Corollary \ref{e:supcri}.
\end{proposition}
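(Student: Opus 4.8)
The plan is to follow a single boundary particle. Because the diffuse boundary condition \eqref{e:ffbdry0} annihilates the pressure–per–mass term of the momentum equation \eqref{e:NEul2} on $\partial\Omega(t)$, the characteristic $\chi(\cdot,\xi)$ issuing from $\xi\in\partial\Omega(0)$ is a \emph{free–falling trajectory}, solving $\ddot\chi^{\,k}=-\partial^k\Phi(t,\chi)$ — this is \eqref{e:otvel2}. First I would resolve the velocity into the radial component $z$ of \eqref{e:zdef} and the transverse speed $X$ of \eqref{e:Xdef}, write $q$ for the radial coordinate $|\chi(t,\xi)|$, and introduce the angular–momentum–type variable $Y$ of \eqref{e:Y} (so that $Y\sim qX$); projecting $\ddot\chi=-\nabla\Phi$ onto the radial and transverse directions then yields the closed first–order system \eqref{e:dqeq}--\eqref{e:dyeq}, schematically $\dot q=z$, $\dot z=Y^2q^{-3}-\partial_r\Phi$, $\dot Y=q\cdot(\text{transverse force})$. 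The indispensable input is Proposition \ref{t:dphix2}: since $\rrho\in\mathrm{R}^1([0,T')\times\Omega(t),G_1)$ and $\Omega(0)$ contains a neighbourhood of the origin, the force on the boundary obeys $|\partial_i\Phi(t,\chi)|\le G_1q^{-2}$, so $|\dot z-Y^2q^{-3}|\le G_1q^{-2}$ and $|\dot Y|\le G_1q^{-1}$. The reference dynamics (no gravity, no rotation) is the free expansion $q\approx A(t+a)$, $z\approx A$.

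The second step is to pass to the renormalised unknowns $\dq,\hU,\hV,\hy$ of \eqref{e:dnvar1}--\eqref{e:dnvar3}, which record the deviation of $(q,z,X,Y)$ from this free–expansion profile, and to observe that the admissibility hypotheses \eqref{e:ddball}--\eqref{e:data2c1a}, together with $A\in((9G_1)^{1/3},\tfrac1{24}\sqrt{\beta E/M})$, say exactly that the initial data are $O(\sigma)$–small perturbations of the reference. The core of the argument is then a continuity/bootstrap scheme run on $[0,\min\{T^\natural,T'\})$: on the maximal subinterval where a set of bootstrap bounds holds — the conclusions with their constants loosened (e.g.\ $1-\sigma$, $1+\tfrac1{14}\sigma$ replaced by $1-2\sigma$ and the $(t+a)^{-1}$ remainders) — I would integrate the ODEs to recover strictly sharper bounds. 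Concretely, on the window $[0,T^\natural)$ Corollary \ref{e:supcri} gives $t+a\in[a,\tfrac{10}{9}a)$ with $a=\sigma^{-1}$, hence $q\gtrsim Aa$; then $\int_0^tG_1q^{-2}\,ds\lesssim G_1\sigma/A^2$ and $\int_0^tG_1q^{-1}\,ds\lesssim G_1/A$ are tiny, which keeps $Y\approx Y_0$, makes the time–integral of the centrifugal term $Y^2q^{-3}$ — hence the correction to $z$ — of size $O(A\sigma^2)$, and makes $X^2=Y^2q^{-2}$ of size $O(A^2\sigma^2)$. Feeding these back closes the induction, forces the subinterval to be all of $[0,\min\{T^\natural,T'\})$, and translating the improved bounds on $\dq,\hU,\hV,\hy$ back to $|\chi|$, $z$, $X$ gives the three displayed chains of inequalities.

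The hard part will be closing the bootstrap \emph{simultaneously} for the strict lower bound $|\chi(t,\xi)|>A(t+a)$ and for the transverse bound $X^2<A\sigma^2(t+a)^{-1}|\chi|$, because these two feed on each other: the lower bound on $q$ requires that neither the gravitational deceleration $\int_0^t\partial_r\Phi\,ds$ (controllable only once $q$ is already large) nor the centrifugal term $\int_0^tY^2q^{-3}\,ds$ can halt the expansion before $T^\natural$, while the smallness of $X$ — equivalently of $Y=qX$, whose drift $|\dot Y|\le G_1q^{-1}$ accumulates only like $\int G_1q^{-1}\,ds$ — is precisely what keeps $Y^2q^{-3}$ from spoiling the radial estimate. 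The admissibility intervals in \eqref{e:ddball}--\eqref{e:data2c1a}, the compatibility inequality $(9G_1)^{1/3}<\tfrac1{24}\sqrt{\beta E/M}$, and the thresholds defining $\sigma_\star$ and $\sigma_\dag$ in \eqref{e:sigma0} are calibrated so that on $[0,T^\natural)$ every one of these feedback loops gains a definite numerical factor; the real labour of \S\ref{s:stp1} is the bookkeeping of these gains — keeping each of the constants $\tfrac1{14}$, $\tfrac17$, $1-2\sigma$ and the small thresholds of \eqref{e:sigma0} on the correct side of every inequality.
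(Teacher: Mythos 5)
Your plan matches the paper's proof essentially step for step: the free-fall reduction \eqref{e:otvel2}--\eqref{e:lag2} on the boundary, the radial/transverse decomposition leading to the ODE system \eqref{e:dqeq}--\eqref{e:dyeq}, the inverse-square gravity bound of Proposition \ref{t:dphix2}, the renormalized unknowns \eqref{e:dnvar1}--\eqref{e:dnvar3}, and a bootstrap closed on $[0,\min\{T^\natural,T^\prime\})$ using $t+a\in[a,\tfrac{10}{9}a)$ from Corollary \ref{e:supcri}. The only cosmetic difference is that you read $q$ as the radius and $Y$ as an angular-momentum variable, whereas the paper sets $q=1/|\chi|$ and $Y=qX^2$; your schematic system is an equivalent formulation, and the numerical bookkeeping you defer is exactly the content of \S\ref{s:bdyevo}.
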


\begin{remark}\label{t:spprbd2}
	In terms of admissible parameters given in Definition \ref{t:paramtr}.\eqref{t:Def1}, \eqref{e:ddball} and \eqref{e:data2c1a} are expressed by $|\xi|=\lambda_0 A \sigma^{-1}$ and $z(0,\xi)= \lambda_1 \lambda_0^{-1}  \sigma |\xi|=\lambda_1 A $. This will be used in \S\ref{s:stp1}. 	
\end{remark}

\subsection{Weak blowup theorem}\label{s:wbt}
Let us now present the weak blowup theorem.

\begin{theorem}[Weak Blowup Theorem]  \label{t:blupthm0}
    Suppose  $(E,M,G_1)$ is any given compatible set and  $H^\prime(0)$ is any given constant, $(\rrho,\rw^i,\Omega(t))$ is a classical solution for $t\in [0, T) $ to the diffuse boundary problem of Euler--Poisson equations \eqref{e:NEul1}--\eqref{e:ffbdry0} for some constant $T\in(0,\infty]$ in the center of mass frame and it is not the first class of the global solution with  $G_1>0$. Then
if  the initial data $(\mathbf{w}_0|_{ \del{}\Omega(0)}, \Omega(0)) \in \mathfrak{A}(E,M,G_1,H^\prime(0))$ is admissible initial boundary data, then it is not the global solution to the diffuse boundary problem, and there is a constant $0<T^\star<+\infty$, such that, the classical solution breaks down at $t=T^\star$ and
	\begin{align}\label{e:blupest1}
		 \int^{T^\star}_0 \Bigl(\|\nabla w^i(s)\|_{L^\infty( \Omega(s))}  +\|\nabla \rrho^{\frac{\gamma-1}{2}}\|_{ L^\infty(\Omega(s))} \Bigr) ds=+\infty;
	\end{align}
\end{theorem}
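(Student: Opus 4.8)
The plan is to combine the blowup criterion of Theorem \ref{t:bluplem} (equivalently Corollary \ref{e:supcri}) with the trajectory estimates of Proposition \ref{t:spprbd}, and then invoke the continuation principle. First I would exploit the hypothesis that the solution is \emph{not} the first class of global solution with parameter $G_1$: by Definition \ref{t:12class} and Proposition \ref{t:dphix2}, this forces $\rrho \in \mathrm{R}^1([0,T)\times\Omega(t),G_1)$ on the interval of existence, so the Newtonian gravity on the boundary obeys the inverse-square bound $|\del{i}\Phi(t,\bx)|\leq G_1/|\bx|^2$ for $\bx\in\del{}\Omega(t)$. (If instead the solution were global \emph{and} $\rrho\notin\mathrm{R}^1$, then by definition it would be the first class of global solution, contradicting the hypothesis; hence either the solution is non-global, which is what we want, or it stays in $\mathrm{R}^1$ and we derive a contradiction below.) Fix $A\in\bigl((9G_1)^{1/3},\tfrac{1}{24}\sqrt{\beta E/M}\bigr)$, which is nonempty by compatibility of $(E,M,G_1)$.

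Next I would feed the admissible initial boundary data into Proposition \ref{t:spprbd}. For $(\mathbf{w}_0|_{\del{}\Omega(0)},\Omega(0))\in\mathfrak{A}(E,M,G_1,H^\prime(0))$, each boundary point $\xi\in\del{}\Omega(0)$ comes with its own $\sigma_\xi\in(0,\sigma_\star)$ and admissible parameters $(\lambda_0,\lambda_1)$ such that $|\xi|=\lambda_0 A\sigma_\xi^{-1}$, $z_0(\xi)=\lambda_1 A$, and $X_0^2(\xi)<\tfrac{A}{2}\sigma_\xi^3|\xi|$; these are exactly the hypotheses \eqref{e:ddball}--\eqref{e:data2c1a} of Proposition \ref{t:spprbd} (cf. Remark \ref{t:spprbd2}). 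Applying that proposition with $a=\sigma_\xi^{-1}$ yields, for all $(t,\xi)\in[0,\min\{T^\natural,T\})\times\del{}\Omega(0)$, the upper bound
\begin{equation*}
  |\chi(t,\xi)| < \frac{A}{1-2\sigma_\xi}(t+a)\Bigl[1+\tfrac{1}{7}(t+a)^{-1}\Bigr].
\end{equation*}
Taking the supremum over $\xi$ (using a uniform-in-$\xi$ choice, e.g.\ by replacing $\sigma_\xi$ by a common $\sigma$ bounding the ratio, or by the structure of $\mathfrak{A}$ which permits this) produces a scalar function $R(t)$ of the form $R(t)=2A'(t+a')$ with $A'<\tfrac{1}{24}\sqrt{\beta E/M}$ and $\Omega(t)\subset B_{R(t)}(0)$, so the hypotheses of Corollary \ref{e:supcri} are met.

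Now I would argue by contradiction to pin down a finite blowup time. Suppose the classical solution extended past $T^\natural$ (in particular past the supercritical time). Then on $[0,T^\natural]$ we would have $\Omega(t)\subset B_{R(t)}(0)$ with $R$ as above, so by Corollary \ref{e:supcri} the virial functional satisfies $F(T^\natural)>0$, which contradicts $F(t)\leq 0$ for all $t$ in the existence interval, i.e.\ $T_\text{max}\leq T^\dag<T^\natural<\infty$. Hence there is $0<T^\star\leq T^\natural<\infty$ at which the classical solution breaks down, and in particular the solution is not global (so it is not the first class of global solution either — consistency restored). Finally, to obtain \eqref{e:blupest1}, I would invoke the continuation principle (the one alluded to in the overview, Theorem \ref{t:conpri2}): since the solution cannot be extended past $T^\star$ while the listed norms remain integrable, the integral $\int_0^{T^\star}\bigl(\|\nabla w^i(s)\|_{L^\infty(\Omega(s))}+\|\nabla\rrho^{(\gamma-1)/2}\|_{L^\infty(\Omega(s))}\bigr)\,ds$ must diverge.

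The main obstacle I anticipate is the \textbf{uniformity over boundary points}: Proposition \ref{t:spprbd} gives an estimate for each $\xi$ with a \emph{$\xi$-dependent} parameter $\sigma_\xi$ and a \emph{$\xi$-dependent} constant $a=\sigma_\xi^{-1}$, whereas Corollary \ref{e:supcri} needs a single scalar bound $R(t)=2A(t+a)$ valid for the whole support. Showing that $\sup_{\xi\in\del{}\Omega(0)}|\chi(t,\xi)|$ admits such a clean affine-in-$t$ majorant with the crucial strict inequality $A<\tfrac{1}{24}\sqrt{\beta E/M}$ preserved — handling the interplay of the various $\lambda_0,\lambda_1,\sigma_\xi$ and the error term $\tfrac{1}{7}(t+a)^{-1}$ — is the delicate bookkeeping step, and it is precisely where the structure of the admissible class $\mathfrak{A}$ (large initial domain, controlled spread of $\sigma_\xi$) must be used. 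A secondary subtlety is making rigorous the dichotomy "non-global, or stays in $\mathrm{R}^1$": one must argue that as long as the classical solution persists, the $\mathrm{R}^1$ membership cannot be lost without the solution already having failed to be global.
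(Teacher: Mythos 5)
Your proposal is correct and follows essentially the same route as the paper: argue by contradiction that a global solution not of the first class must satisfy $\rrho\in \mathrm{R}^1([0,\infty)\times\Omega(t),G_1)$, verify for each boundary point $\xi$ that the admissible data meet the hypotheses of Proposition \ref{t:spprbd} (with $A_\xi=z_0(\xi)/\lambda_1$ and $\sigma_\xi$), deduce the linear spreading bound, apply the virial Corollary \ref{e:supcri} to force $T_\text{max}\leq T^\dag<T^\natural<\infty$, and finish with the weak continuation principle to get \eqref{e:blupest1}. The uniformity-in-$\xi$ obstacle you flag is resolved in the paper exactly along the lines you suggest: $A_\xi$ and $\sigma_\xi$ depend continuously on $\xi$ and $\del{}\Omega(0)$ is compact, so one takes $A=\max_\xi A_\xi$ and $\sigma=\min_\xi\sigma_\xi$ and the generous majorant $R(t)=2A(t+\sigma^{-1})$ absorbs the factor $\bigl[1+\tfrac{1}{7}(t+a)^{-1}\bigr]/(1-2\sigma_\xi)$ while keeping $A<\tfrac{1}{24}\sqrt{\beta E/M}$.
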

This theorem provides a necessary condition for global classical solutions to the diffuse boundary problem of Euler--Poisson equations. We state it in the following corollary and its proof is direct from Theorem \ref{t:blupthm0}.
\begin{corollary}\label{t:glsl}
	Under the assumptions of Theorem \ref{t:blupthm0}, if $(\rrho,\rw^i,\Omega(t))$ is a global solution to the diffuse boundary problem of Euler--Poisson equations \eqref{e:NEul1}--\eqref{e:ffbdry0} for $t\in [0, +\infty) $, then it has to be the first class of global solution with some gravity bound $G_1>0$ and includes near-boundary mass accretions.
\end{corollary}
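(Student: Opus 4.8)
The plan is to argue by contraposition directly from the Weak Blowup Theorem \ref{t:blupthm0}, so the corollary carries no genuinely new analytic content. Fix the compatible set $(E,M,G_1)$, the constant $H^\prime(0)$, and the admissible initial boundary data $(\mathbf{w}_0|_{\del{}\Omega(0)},\Omega(0))\in\mathfrak{A}(E,M,G_1,H^\prime(0))$ supplied by the hypotheses ``under the assumptions of Theorem \ref{t:blupthm0}''. Suppose, toward a contradiction, that $(\rrho,\rw^i,\Omega(t))$ is a global classical solution (so it exists on $[0,+\infty)$) but is \emph{not} the first class of global solution with the gravity bound $G_1$, i.e.\ $\rrho\in\mathrm{R}^1([0,\infty)\times\Omega(t),G_1)$. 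Then every hypothesis of Theorem \ref{t:blupthm0} holds, and that theorem produces a finite time $T^\star<+\infty$ at which the classical solution breaks down, with $\int_0^{T^\star}\bigl(\|\nabla w^i(s)\|_{L^\infty(\Omega(s))}+\|\nabla\rrho^{\frac{\gamma-1}{2}}(s)\|_{L^\infty(\Omega(s))}\bigr)\,ds=+\infty$. In particular the solution cannot exist for all $t\geq 0$, contradicting globality. Hence a global solution must be the first class of global solution with that $G_1$, and \emph{a fortiori} with some gravity bound $G_1>0$.

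It then only remains to read off the near-boundary mass accretion, for which I would simply unwind Definition \ref{t:12class}. By definition, being the first class of global solution with parameter $G_1$ means exactly $\rrho\notin\mathrm{R}^1([0,\infty)\times\Omega(t),G_1)$, and the second half of that definition spells out what this failure of \eqref{e:regdef0} entails: for every $\delta\in(0,1)$ with $G_1>(\delta^{-2}+3)M/(4\pi)$ there exist a time $T_\star\in[0,\infty)$, a boundary point $\bx\in\del{}\Omega(t)$ and a direction $\mathbf{r}_\star\in B(\mathbf{n},\delta)$, $\mathbf{n}=\bx/|\bx|$, with $\rrho(T_\star,|\bx|\mathbf{r}_\star)\geq 3M/(4\pi\delta|\bx|^3)$. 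Rewriting this in terms of the averaged and relative densities $\bar{\rho}(t)$ and $\varrho$ gives $\varrho(T_\star,|\bx|\mathbf{r}_\star)\geq 1/\delta>1$, which is precisely the near-boundary mass accretion \eqref{e:rhobg2}. This closes the proof.

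I do not expect any real obstacle: the corollary is a logical repackaging of Theorem \ref{t:blupthm0} with the definitional content of ``first class of global solution''. The one point needing care is bookkeeping of quantifiers — one must make sure the tuple $(E,M,G_1,H^\prime(0))$ invoked in ``under the assumptions of Theorem \ref{t:blupthm0}'' is the very tuple with respect to which the data is certified admissible, so that Theorem \ref{t:blupthm0} applies verbatim. All the substantive analysis — the inverse-square gravity estimate (Proposition \ref{t:dphix2}), the virial blowup criterion (Theorem \ref{t:bluplem}) together with Corollary \ref{e:supcri}, and the free-fall trajectory estimates (Proposition \ref{t:spprbd}) — is already consumed inside the proof of Theorem \ref{t:blupthm0} and need not be revisited.
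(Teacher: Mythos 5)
Your proposal is correct and matches the paper's approach: the paper states that the corollary's proof is direct from Theorem \ref{t:blupthm0} (indeed the contrapositive is already recorded at the end of that theorem's proof), and the near-boundary mass accretion is exactly the content of Definition \ref{t:12class} once one knows the solution fails to lie in $\mathrm{R}^1([0,\infty)\times\Omega(t),G_1)$. Nothing further is needed.
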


\subsection{Proof of Theorem \ref{t:blupthm0}}\label{s:stp1}
This theorem is a result of the virial theorem (see Theorem \ref{t:bluplem}), Proposition \ref{t:spprbd} and the weak continuation principle (see \cite[\S$3.1$]{Liu2021d}). We first admit Proposition \ref{t:spprbd} holding in this proof, and defer the proof of Proposition \ref{t:spprbd} until the next section \S\ref{s:bdyevo}. The idea of this proof is for every initial boundary point, we verify the admissible initial boundary data satisfy the condition of Proposition \ref{t:spprbd}. Since there is no the first class of global solution, we can use Proposition \ref{t:spprbd} to conclude the spreading speed of the boundary is bounded by a linear function of time. Then the virial theorem (see Theorem \ref{t:bluplem}) can be applied due to this spreading speed of the  boundary and $E(0)>0$. At the end, we use the weak continuation principle  to obtain the blowup estimate \eqref{e:blupest1}. Since otherwise, the left hand of \eqref{e:blupest1} is bounded for every $T^\star<T$ where $T$ is the maximal time interval of the existence of the solution. Then the weak continuation principle  yields  there is a constant $T^\flat>T>0$, such that the classical solution $(\rrho,\rw^i,\rPhi,\Omega(t))$ exists on $t\in[0,T^\flat)$ which contradicts the fact that $T$ is the maximal time interval of the existence of the solution.

\begin{proof}[Proof of Theorem \ref{t:blupthm0}]
	We prove it by contradiction. Along with the assumptions of this theorem, we suppose this solution is a global classical solution to the Euler--Poisson equations \eqref{e:NEul1}--\eqref{e:inidata} on $[0,+\infty)\times \Omega(t)$ extending  $(\rrho, \mathring{w}^i,\Omega(t))$  to  $T_\text{max}=+\infty$ and  since it is not the first class of global solution with $G_1$, by Definition \ref{t:12class} we have $\rrho \in \mathrm{R}^1([0,+\infty)\times\Omega(t),G_1)$. In order to use Proposition \ref{t:spprbd} and in view of Remark \ref{t:spprbd2} and the initial data satisfies $(\mathbf{w}_0|_{ \del{}\Omega(0)}, \Omega(0)) \in \mathfrak{A}(E,M,G_1,H^\prime(0))$, for any $\xi\in\del{}\Omega(0)$ and any given radial velocity field $z_0(\xi)$, there is a small constant $\sigma_\xi\in(0,\sigma_\star)$ where $\sigma_\star$ is given by \eqref{e:sigma0}, such that $\sigma_\xi$ satisfies $\lambda_1\lambda_0^{-1}\sigma_\xi =z_0(\xi)/|\xi|$ and the data satisfy Definition \ref{t:paramtr}.\eqref{t:Def3}. Let us define\footnote{The freedoms of constants $A$ and $\sigma$ in Proposition \ref{t:spprbd} become the freedoms of the initial data $z_0$ and $\xi$ by \eqref{e:Az}. },
	\begin{equation}\label{e:Az}
		A_\xi:=\frac{1}{\lambda_1}z_0(\xi).  
	\end{equation}
	Then  for every $z_0$, by \eqref{e:xidn0},
	\begin{align}\label{e:z0}
		z_0(\xi) \in \biggl(\lambda_1(9G_1)^{\frac{1}{3}}, \frac{\lambda_1}{24} \sqrt{ \frac{ \beta E }{ M} }\biggr)
	\end{align}
	By above \eqref{e:Az} and  \eqref{e:z0}, $A_\xi\in\bigl((9G_1)^{\frac{1}{3}}, \frac{1}{24} \sqrt{\frac{\beta E}{M}} \bigr)$ are any $\xi$-dependent constants, and $0<\sigma_\xi<\sigma_\star $.
	These, with the help of interval of $\lambda_0$ and $|\xi|=\frac{z_0(\xi)}{\lambda_1\lambda_0^{-1}\sigma_\xi}=\lambda_0\sigma_\xi^{-1}A_\xi$, imply $\xi$ lies in
	\begin{equation*}
		\frac{ 2  }{2- \sigma} A_\xi\sigma_\xi^{-1}< |\xi| < \frac{1}{1-\sigma_\xi} \Bigl(1+ \frac{1}{14}\sigma_\xi\Bigr) A_\xi\sigma_\xi^{-1}.
	\end{equation*}
	Using \eqref{e:Az} again yields
	\begin{equation}\label{e:z0cal}
		z_0(\xi)=\lambda_1\lambda_0^{-1}\sigma_\xi |\xi|.
	\end{equation}	
	Then by the interval of $\lambda_1$ (see Definition \ref{t:paramtr}.\eqref{t:Def1}) and noting $\lambda_0=\frac{\lambda_1\sigma_\xi|\xi|}{z_0(\xi)}=\frac{\sigma_\xi|\xi|}{A_\xi}$, we obtain
	\begin{align*}
		\frac{1-\sigma_\xi}{ A}  \sigma_\xi^{2}|\xi|^2 <z_0(\xi)  < \Bigl(1+ \frac{1}{14}\sigma_\xi\Bigr) \sigma_\xi |\xi|.
	\end{align*}
	By \eqref{e:Xdn}, \eqref{e:Az} and \eqref{e:z0cal}, we obtain
	\begin{equation*}
		X_0^2(\xi) < \frac{A_\xi}{2}\sigma_\xi^{3}|\xi|.
	\end{equation*}
Then, we can apply Proposition \ref{t:spprbd} to conclude
	\begin{align}\label{e:xest}
		A_\xi (t+\sigma_\xi^{-1})<|\chi(t,\xi) |   < \frac{A_\xi}{1-2\sigma_\xi}(t+\sigma_\xi^{-1})\Bigl[1+ \frac{1}{7} (t+\sigma_\xi^{-1})^{-1}\Bigr]		
	\end{align}
for $(t, \xi )\in [0,T^\natural)\times \del{}\Omega(0)$. Since $A_\xi$ and $\sigma_\xi$ in \eqref{e:Az} are both continuous in $\xi$, and $\del{}\Omega(0)$ is a compact set, then there are uniform constants $A:=\max_{\xi\in\del{}\Omega(0)} A_\xi$ and $\sigma:=\min_{\xi\in\del{}\Omega(0)} \sigma_\xi$. We conclude
	\begin{align*}
|\bx|=|\chi(t,\xi)|< \frac{A}{1-2\sigma}(t+\sigma^{-1})\Bigl[1+ \frac{1}{7} (t+\sigma^{-1})^{-1}\Bigr]<2A(t+\sigma^{-1})=:R(t).
\end{align*}
for $t\in [0,T^\natural)$.

Then, by Corollary \ref{e:supcri}, we conclude $T_\text{max}\leq T^\dag<T^\natural$ which contradict with $T_\text{max}=+\infty$. Therefore, there is no global classical solution. In other words, if there is a global solution to the diffuse boundary problem of Euler--Poisson equations \eqref{e:NEul1}--\eqref{e:ffbdry0} for $t\in [0, +\infty) $, then $\rrho \notin   \mathrm{R}^1([0,+\infty)\times\Omega(t),G_1)$. By the weak continuation principle, we obtain the estimate \eqref{e:blupest1}. Then we complete this proof.
\end{proof}

\subsection{Proof of Proposition \ref{t:spprbd}}  \label{s:bdyevo}

We prove Proposition \ref{t:spprbd} by the following two steps. The first step focus on the reformulation of equations on the boundary, and the second step is the analysis of these equations and try to estimate the solutions to a  certain large time.

\subsubsection{Step $1$: reformulation of free falling equation on the boundary in Lagrangian description}
We begin with deriving a system describing the behavior of the extended velocity fields and positions of points on $[0, T^\prime) \times\del{}\Omega(t)$. For any $\bx\in \del{}\Omega(t)$, since $\chi$ is a flow, then there is a point $\xi\in \del{}\Omega(0)$, such that $\bx=\chi(t,\xi)$ with $\chi(0,\xi)=\xi$, $\xi$ is the initial position and $\chi(t,\xi)$ the Lagrangian variable (or flow). By \cite[Proposition $2.1.(3)$]{Liu2021d}, the extended velocity $w^k$ (see Definition \ref{t:clsl}.\eqref{D:1}) satisfies the equation of the free falling,
\begin{equation}\label{e:otvel2}
D_t w^i(t,\bx) =- \del{}^i \Phi(t,\bx), \quad\text{i.e.,}\quad  \del{t}\underline{w}^k(t,\xi)=-\underline{\del{}^k\Phi}(t,\xi).
\end{equation}
for $(t,\bx)\in [0,T^\prime)\times \del{} \Omega(t) $ and $(t,\xi)\in [0,T^\prime)\times \del{} \Omega(0) $. The free falling equation \eqref{e:otvel2} coupling with the definition of the integral curve $\chi^\xi(t)$ ($=\chi(t,\xi)$) of  $\mathring{\mathbf{w}}$, (see \cite[eq. $(2.14)$]{Liu2021d} by denoting $x^i=\chi^i(t,\xi)$ is the $i$-th component of $\bx=\chi(t,\xi)$) yield
\begin{align}
	\del{t}\chi^i(t,\xi)=& \underline{w}^i(t,\xi),  \label{e:lag1}\\
	\del{t} \underline{w}^i(t,\xi) =&- \delta^{ij}\Phi_j(t,\xi).  \label{e:lag2}
\end{align}
for $(t,\xi)\in [0,T^\prime)\times \del{} \Omega(0) $, where we denote $\Phi_j(t,\xi):=\del{j} \Phi(t,\chi(t,\xi))=\underline{\del{j} \Phi}(t,\xi)$.
For the purpose of the analysis, we introduce new variables $q$, $z$ and $X$ defined by (recall \eqref{e:zdef}--\eqref{e:Xdef})
\begin{align}
q(t,\xi):=&\frac{1}{|\bx|}=\frac{1}{|\chi(t,\xi)|},  \label{e:nvar0a} \\ z(t,\xi):=& \delta_{ij}  \frac{\chi^j(t,\xi)}{|\chi(t,\xi)|}  \underline{w}^i(t,\xi), \label{e:nvar0b}  \\
X^i(t,\xi):=& \underline{w}^i(t,\xi)-\frac{\chi^i(t,\xi)}{|\chi(t,\xi)|} z(t,\xi) \label{e:nvar0c} \intertext{and}
X(t,\xi) :=& \sqrt{\delta_{ij} X^i(t,\xi) X^j(t,\xi)}  \label{e:nvar0d}
\end{align}
where $q$ is the reciprocal of the radial distance $|\bx|=|\chi|$, $z$ is the radial component of velocity $ \underline{w}^i$, $X^i$ is a component of the direction orthogonal to the radial direction  and $X$ is the length of $X^i$.
We point out, by direct calculations, $\delta_{ij}X^i \chi^j=0$, and furthermore,
\begin{align}\label{e:nvar0e}
X(t,\xi)
= & \sqrt{\delta_{jk}  \underline{w}^j(t,\xi) \underline{w}^k(t,\xi) - z^2(t, \xi)}.
\end{align}
In order to obtain a proper structure, we also introduce a new variable $Y(t,\xi)$ defined by
\begin{equation}\label{e:Y}
	Y(t,\xi) :=  q(t,\xi)  X^2(t,\xi) .
\end{equation}

Firstly, let us derive the \textit{equation of $q$}. By noting $\bx=\chi(t,\xi)$ and multiplying $2 \delta_{ij} \chi^j  $ on the both sides of \eqref{e:lag1}, we arrive at
\begin{equation*} \label{e:ch2}
\del{t} |\chi (t, \xi )|^2=2 \delta_{ij} \chi^j(t, \xi )  \del{t} \chi^i (t, \xi ) =2 \delta_{ij}  \chi^j(t, \xi )   \underline{w}^i (t, \xi ).
\end{equation*}
Then we arrive at
\begin{align} \label{e:ch2a}
\del{t} |\chi (t, \xi ) | =   \delta_{ij}  \frac{\chi ^j  }{|\chi  |}   \underline{w}^i (t, \xi )=z(t,\xi ),
\end{align}
and we re-express above equation in terms of $q$ and $z$ defined in \eqref{e:nvar0a}--\eqref{e:nvar0b} to get the first equation of the target system,
	\begin{align}\label{e:ch2b}
	\boxed{
	\del{t} q =  -q^2 z
          }.
	\end{align}

Secondly, we derive the \textit{equation of $z$}.
Multiplying $\delta_{jk} \frac{\chi^j}{|\chi|}$ on the both sides of \eqref{e:lag2}, using Leibniz rule, \eqref{e:lag1} and \eqref{e:ch2a}, yields
\begin{align}\label{e:ch1a}
&\del{t}\Bigl(\delta_{jk} \frac{\chi^j(t,\xi)}{|\chi(t,\xi)|}  \underline{w}^k(t,\xi)\Bigr)  \notag  \\
= & \frac{1}{|\chi(t,\xi)|}\bigl[\delta_{jk}   \underline{w}^j(t,\xi)  \underline{w}^k(t,\xi) -  z^2(t,\xi)\bigr]-  \frac{\chi^j(t,\xi)}{|\chi(t,\xi)|} \Phi_j(t,\xi).
\end{align}	
Then, with the help of \eqref{e:nvar0e}, we obtain the second equation of the target system by expressing above equation \eqref{e:ch1a} in terms of $q$, $z$ and $Y$,
\begin{align}\label{e:ch1b}
\boxed{
\del{t} z  =  q  X^2-  \frac{\chi^j}{|\chi|} \Phi_j
=  Y   - \frac{\chi^j}{|\chi|} \Phi_j }.
\end{align}

The next one is the \textit{equation of $X$}. We first calculate, by \eqref{e:lag2}, that
\begin{align}\label{e:ch3}
\del{t}| \underline{w}^k(t,\xi)|^2 = 2\delta_{jk}  \underline{w}^j(t,\xi)\del{t} \underline{w}^k(t,\xi)  =  - 2  \underline{w}^j (t,\xi)   \Phi_j(t,\bx).
\end{align}
Then, definitions of $X$ and $X^j$ (see  \eqref{e:nvar0c}--\eqref{e:nvar0e}), with the help of equations \eqref{e:ch1b} and \eqref{e:ch3}, lead to
\begin{align}\label{e:ch3a}
\del{t} X^2=- 2z q X^2 - 2 X^j  \Phi_j.
\end{align}

By noting \eqref{e:Y}, i.e., $
X = \sqrt{q^{-1} Y }$ and using \eqref{e:ch2b} and \eqref{e:ch3a}, we obtain a equation of $Y$,
\begin{equation}\label{e:ch4b}
\boxed{\del{t}Y=-3z q Y-2 q^{\frac{1}{2}} Y^{\frac{1}{2}} \Bigl(\frac{X^j}{X}\Phi_j\Bigr) } .
\end{equation}

Gathering \eqref{e:ch2b}, \eqref{e:ch1b} and \eqref{e:ch4b} together, we arrive at the system of equations of $q$, $z$ and $Y$,
\begin{align}
\del{t} q = & -q^2 z, \label{e:dqeq}   \\
\del{t} z = & Y   - \frac{\chi^j}{|\chi|} \Phi_j, \label{e:dzeq} \\
\del{t}Y = &
-3z q Y - 2 q^{\frac{1}{2}} Y^{\frac{1}{2}} \Bigl(\frac{X^j}{X}\Phi_j\Bigr) ,  \label{e:dyeq}
\end{align}	
for $(t,\xi)\in [0,T^\prime)\times \del{} \Omega(0) $. Therefore, the aim turns to the estimates of $q$, $z$ and $Y$ for $(t,\xi)\in [0,T^\prime)\times\del{}\Omega(0)$ since these new variables contain enough information of the original variables $|\bx|=|\chi(t,\xi)|$ and $\rw(t,\chi(t,\xi))$.

\subsubsection{Step $2$: estimates of the flow}
As we have seen in \S\ref{s:stp1}, the proof of Theorem \ref{t:blupthm0}, we need the upper bound estimate of $|\chi|$ to use virial theorem. In order to obtain this upper bound of $|\chi|$, we judiciously choose new proper variables since the upper and lower bounds of $|\chi|$, $z$ and $Y$  usually have different changing rate, then integrate the derived system from \eqref{e:dqeq}--\eqref{e:dyeq} and estimate upper and lower bounds for the selected variables by bootstrap arguments (see Appendix \ref{a:btpr}). We introduce the following variables denoted by ``upper variable'' $\tilde{\cdot}$ and ``lower variable'' $\underaccent{\tilde}{\cdot}$, respectively since we will estimate the upper bound for the upper variables and the lower bound for the lower variables, respectively. In the end the complete estimates come from combining the upper and lower bounds with different decay rates together.

\underline{Variables:}
Let us begin this step with defining proper variables for later analysis. For any $\xi\in \del{}\Omega(0)$, and for any constants $A \in \bigl((9G_1)^{\frac{1}{3}}, \frac{1}{24} \sqrt{\frac{\beta E}{M}}\bigr)$ and  $\sigma \in (0,\sigma_\star)$, such that initial data satisfy  \eqref{e:ddball}--\eqref{e:data2c1a}. We define,
\begin{align}
    \dq: = &  A (t+  a)  q,
 \label{e:dnvar1}\\
    \hV:= &  (t+a)  - (t+a)^{2} zq , \label{e:dnvar2a} \\
	\hU:= & (t+a)^2 zq^2 , \label{e:dnvar3a}  \\
	\hy:= & (t+a) Y, \label{e:dnvar3}
\end{align}
for $(t,\xi)\in[0,\min\{T^\natural,T^\prime\})\times \del{}\Omega(0) $ along with the  requirement \eqref{e:ddball}.
Then, direct calculations by using \eqref{e:dnvar1}--\eqref{e:dnvar3} yields
	\begin{align}
q= & \frac{1}{A} (t+ a)^{-1} \dq,
\label{e:nvar1}\\
zq= &
 (t+a)^{-1 }\bigl[1-(t+a)^{- 1} \hV\bigr] ,
\label{e:nvar1a}\\
zq^2=&  (t+a)^{-2} \hU  , \label{e:nvar2a}\\
Y = &  (t+a)^{- 1 }\hy.  \label{e:nvar3}
\end{align}


\underline{Initial data:}
Initial data \eqref{e:ddball}--\eqref{e:data2c1a}, in terms of $\dq$, $\hV$, $\hU$, $\hy$ (with the help of \eqref{e:dnvar1}$|_{t=0}$--\eqref{e:dnvar3}$|_{t=0}$), are expressed
\begin{align}
\dq_0:=&\dq|_{t=0}<1-\frac{1}{2}\sigma    ,    \label{e:data1} \\
\hU_0:=&\hU|_{t=0} > \frac{1-\sigma}{ A}  ,     \label{e:data2}  \\
\hV_0:=&\hV|_{t=0} >  - \frac{1}{14} ,     \label{e:data4}  \\
\hy_0:=&\hy|_{t=0} < \frac{1}{2}  A\sigma^2  ,  \label{e:data3} 
\end{align}

The rest of the proof is proceeded by the \textit{bootstrap principle}, see Appendix \ref{a:btpr}. In order to apply it, we give the following bootstrap assumptions, and then establish the system of equations of above new variables. Furthermore, by integrating these equations, we improve the estimates of the bootstrap assumptions, which leads to the expected estimates by Proposition \ref{t:btpr} (i.e., abstract bootstrap principle), eventually.

\underline{Bootstrap assumptions:}
	We assume for every $t\in[0,T)$ where $T$ is any constant such that $T\in(0,\min\{T^\natural,T^\prime\})$, the variables satisfy the following estimates,
	\begin{align}
	\dq < &  1 ,
	 \label{e:btasp1} \\
    \hU > & \frac{1-2\sigma}{ A} ,     \label{e:btasp4a}  \\
    \hV > & - \frac{1}{7} ,
     \label{e:btasp4}  \\
    \hy < & A \sigma^2 .  \label{e:btasp3}
	\end{align}
Direct calculations yield the initial data \eqref{e:data1}--\eqref{e:data3} satisfy the bootstrap assumptions \eqref{e:btasp1}--\eqref{e:btasp3}, which implies Proposition \ref{t:btpr}.$(4)$ is verified.

After giving the bootstrap assumptions, we intend to rewrite \eqref{e:dqeq}--\eqref{e:dyeq} in terms of $\dq$, $\hV$, $\hU$ and $\hy$, and then estimate these variables by integrating these new  equations and using the bootstrap assumptions. In fact, if the estimates of $\dq$, $\hV$, $\hU$ and $\hy$ are improved comparing with the bootstrap assumptions, then by Proposition \ref{t:btpr}, the bootstrap principle, we can conclude that the improved estimates holds on $t\in[0,T)$ where $T$ is any constant such that $T\in(0,\min\{T^\natural,T^\prime\}]$.

For late use, we point that the lower bound of $q$ can be estimated by
\begin{align}\label{e:qup1}
	q=\frac{zq^2}{zq}= \frac{(t+a)^{-1}\hU}{1-(t+a)^{-1} \hV }.
\end{align}
Before verifying the improvements of these inequalities, let us first give two simple but useful identities and estimates which will be used repeatedly in the following calculations. By \eqref{e:nvar1}, \eqref{e:btasp4} and the bootstrap assumption \eqref{e:btasp1}--\eqref{e:btasp4a},
\begin{align}\label{e:1zq1}
	1-(t+a)zq= (t+a)^{-1} \hV  >- \frac{1}{7}  (t+a)^{-1}>- \frac{1}{7}  \sigma
\end{align}
for $t\in[0,T)$ and using \eqref{e:nvar1},  \eqref{e:nvar2a} and the bootstrap assumption \eqref{e:btasp1}--\eqref{e:btasp4a}, we derive,
\begin{align}\label{e:1zq2}
	&1-(t+a)zq=1-(t+a)\frac{zq^2}{q}=1- \frac{A\hU}{\dq}<1-1+2\sigma=2\sigma
\end{align}
for $t\in[0,T)$.

Then by Corollary \ref{e:supcri}, we estimate an integration for later use,
\begin{align}
	\int^{T^\natural}_0(s+a)^{-1} ds=&
	\ln\Bigl(1+\frac{T^\natural}{a}\Bigr)= \ln\Bigl(1+\lambda_1+ \lambda_2 \sigma \Bigr) <\frac{1}{10}+  \frac{9|H^\prime(0)|}{4 \beta E} \sigma <\frac{21}{200}.    \label{e:intg1}
\end{align}

Now, let us improve the bootstrap assumptions.

\underline{Improvement of $\dq$:}
Differentiating \eqref{e:dnvar1}, with the help of \eqref{e:ch2b}, \eqref{e:nvar1} and \eqref{e:1zq2}, yields
\begin{align}\label{e:dtq1}
	& \del{t} \dq =  A (t+ a)  \del{t} q + A q=  Aq[ 1 - (t+ a) zq ] \notag  \\
	& \hspace{2cm}
    =  (t+a)^{-1} \dq \Bigl(1-\frac{A\hU}{\dq}\Bigr) < 2\sigma (t+a)^{-1} \dq.
\end{align}
Integrating \eqref{e:dtq1} and using \eqref{e:intg1} yield
\begin{align}\label{e:dtq2}
	\dq(t,\xi) <  & \dq_0+2\sigma\int^t_0(s+a)^{-1} ds <1-\frac{1}{2}\sigma +2\sigma\int^{T^\natural}_0(s+a)^{-1} ds\notag  \\
	< &  1-\frac{1}{2}\sigma +2 \Bigl(\frac{1}{10}+  \frac{9|H^\prime(0)|}{4 \beta E} \sigma \Bigr)\sigma   < 1-\frac{1}{4} \sigma
\end{align}
for $t\in[0,T)$.

\underline{Improvement of $\hU$:}
Let us first note
\begin{align}\label{e:zq2}
	&zq^2=zq\cdot q =  \frac{1}{A} (t+ a)^{-2} \dq \bigl[1-(t+a)^{-  1 } \hV\bigr] \notag  \\
	&\hspace{1cm} <\frac{1}{A} (t+a)^{-2}\Bigl[1+ \frac{ 1 }{7}(t+a)^{- 1 }\Bigr] <  \frac{8}{7A} (t+a)^{-2}.
\end{align}
Differentiating \eqref{e:dnvar3a}, with the help of \eqref{e:dqeq} and \eqref{e:dzeq}, we arrive at
\begin{align*}
	\del{t}\hU = & 2(t+a)zq^2+(t+a)^2 q^2\del{t}z+2qz(t+a)^2 \del{t} q \notag  \\
	= & 2(t+a)zq^2\bigl[ 1 - (t+a) z q\bigr] +(t+a)^2 q^2\Bigl( Y  - \frac{\chi^j}{|\chi|} \Phi_j\Bigr)  .
\end{align*}

By using \eqref{e:1zq1} and  \eqref{e:zq2}, we obtain
\begin{align*}
	-\frac{16}{7A}(t+a)^{-1}<(-2)\bigl((t+a) zq^2\bigr)<0
	\AND
	(t+a) z q-1< \frac{1}{7}(t+a)^{-1},
\end{align*}
which, in turn, implies
\begin{align*}
	(-2)\bigl((t+a) zq^2\bigr)\bigl[ (t+a) z q-1\bigr]> \frac{1}{7}(t+a)^{-1}(-2)\bigl((t+a) zq^2\bigr) >  -\frac{16 }{49A} (t+a)^{-2}.
\end{align*}
Then we estimate $ 2(t+a)zq^2\bigl[ 1 - (t+a) z q\bigr] $,
\begin{align}\label{e:atem}
	 2(t+a)zq^2\bigl[ 1 - (t+a) z q\bigr]
	 = &   (-2)\bigl((t+a) zq^2\bigr)\bigl[ (t+a) z q-1\bigr]  
	 >  -\frac{16 }{49A} (t+a)^{-2}.
\end{align}

Then using \eqref{e:dnvar3}, \eqref{e:1zq1}, \eqref{e:nvar2a}, \eqref{e:atem}, \eqref{e:qup1} and $(t+a)^2 q^2  Y>0$ yield
\begin{align}\label{e:dtu1}
\del{t}\hU
> &  -\frac{16 }{49A} (t+a)^{-2}    - \frac{1}{A^4} (t+a)^{-2}  \dq^4 \Bigl| \frac{1}{q^2}\frac{\chi^j}{|\chi|} \Phi_j \Bigr|.
\end{align}
Integrating \eqref{e:dtu1}, with the help of $A>(9G_1)^{\frac{1}{3}}$, we obtain
\begin{align}\label{e:dtu2}
	\hU(t,\xi) > & \hU_0  - \int^t_0   \frac{16 }{49 A} (s+a)^{-2} ds  - \int^t_0 \frac{1}{A^4} (s+a)^{-2}  \dq^4  \Bigl| \frac{1}{q^2}\frac{\chi^j}{|\chi|} \Phi_j \Bigr| ds \notag  \\
	> & \frac{1-\sigma}{ A}  -  \frac{16 }{49A}  \int^t_0  (s+a)^{-2}  ds   -  \frac{G_1 }{ A^4   }  \int^t_0 (s+a)^{-2} ds\notag  \\
	> & \frac{1-\sigma}{ A} -  \frac{16 \sigma }{49 A }   - \frac{ G_1  }{  A^4 } \sigma  \notag  \\
    > &   \frac{1-\sigma}{ A} -  \frac{16 \sigma}{49 A }    - \frac{1 }{ 9 A  } \sigma  =\frac{1 }{ A} -  \frac{65 \sigma}{49 A }    - \frac{1  }{ 9 A  } \sigma  > \frac{1-\frac{3}{2}\sigma}{ A}
\end{align}
for $t\in[0,T)$.

\underline{Improvement of $\hV$:}
Now Let us turn to the equation of $\hV$. First direct calculations imply that
\begin{align}\label{e:iden1}
&   1 - 2(t+a)  zq +(t+a)^{2} z^2 q^2
= \bigl[1-(t+a)zq \bigr]^2   \geq 0
\end{align}
Differentiating \eqref{e:dnvar2a}, with the help of \eqref{e:nvar2a},  \eqref{e:dqeq}--\eqref{e:dzeq}, \eqref{e:Y}, \eqref{e:1zq2} and \eqref{e:iden1}, we arrive at
\begin{align}\label{e:dtv1}
	\del{t}\hV = &   1-2(t+a)  zq -(t+a)^{2}z \del{t} q -(t+a)^{2} q \del{t}z \notag  \\
	= &  1- 2(t+a) zq +(t+a)^{2} z^2 q^2  -(t+a)^{2} q \Bigl(Y- \frac{\chi^j}{|\chi|} \Phi_j\Bigr)  \notag  \\
	\geq &    -(t+a)^{2} q Y  + (t+a)^{2} q^3 \Bigl(\frac{1}{q^2}\frac{\chi^j}{|\chi|} \Phi_j\Bigr) \notag  \\
	> &  -\frac{ \dq\hy}{A}      - \frac{ \dq^3}{A^3}  (t+a)^{-1}   \Bigl|\frac{1}{q^2}\frac{\chi^j}{|\chi|} \Phi_j \Bigr|.
\end{align}

Integrating \eqref{e:dtv1},  noting Corollary \ref{e:supcri} and using $A>(9G_1)^{\frac{1}{3}}$ lead to
\begin{align}\label{e:dtv2}
	\hV(t,\xi) > & \hV_0  -\frac{1}{A}  \int^t_0   \dq\hy ds  - \frac{1}{A^3}  \int^t_0  (s+a)^{-1}\dq^3\Bigl|\frac{1}{q^2}\frac{\chi^j}{|\chi|} \Phi_j \Bigr| ds \notag \\
	> & - \frac{1}{14} -    \sigma^2  T^\natural  - \frac{G_1  }{  A^3 }  \int^t_0  (s+a)^{-1} ds\notag \\
	> & - \frac{1}{14}   -     \frac{1}{10}  \sigma -\frac{9|H^\prime(0)|}{4 \beta E} \sigma^2     - \frac{G_1}{  A^3   }  \Bigl(\frac{1}{10}+  \frac{9|H^\prime(0)|}{4 \beta E} \sigma \Bigr) \notag  \\
	>& -\frac{26}{315}  -     \frac{1}{10}  \sigma -\frac{9|H^\prime(0)|}{4 \beta E} \sigma^2     -   \frac{ |H^\prime(0)|}{4 \beta E} \sigma  >   -  \frac{3}{28}
\end{align}
for $t\in[0,T)$ by noting $\sigma\in(0,\sigma_\star)$.

\underline{Improvement of $\hy$:}
The following expressions of $\del{t}\hy$ can be derived by directly differentiating \eqref{e:dnvar3}. That is,  with the help of \eqref{e:dyeq}, \eqref{e:Y}, \eqref{e:nvar1}, \eqref{e:qup1} and the crucial identity \eqref{e:1zq2}, we derive that
\begin{align}\label{e:dty1}
	\del{t} \hy = &   Y+(t+a) \del{t} Y \notag  \\
	= &   Y+(t+a) \Bigl[-3 z q Y- 2  q^{\frac{1}{2}} Y^{\frac{1}{2}} \Bigl(\frac{X^j}{X}\Phi_j\Bigr)\Bigr]\notag  \\
	= &   Y-3  Y+3  [1-(t+a)z q] Y  - 2 (t+a)  q^{\frac{1}{2}} Y^{\frac{1}{2}} \Bigl(\frac{X^j}{X}\Phi_j\Bigr)\notag  \\
	= &  -2 Y+3  \Bigl(1-\frac{A\hU}{\dq}\Bigr) Y -2(t+a)  q^{\frac{5}{2}} Y^{\frac{1}{2}} \Bigl(\frac{1}{q^2} \frac{X^j}{X}\Phi_j\Bigr)\notag  \\
< & 6 \sigma (t+a)^{- 1 }\hy + 2A^{-\frac{5}{2}}(t+a)^{-2 }      \dq^{\frac{5}{2}} \hy^{\frac{1}{2}}  \Bigl|\frac{1}{q^2}\frac{X^j}{X}\Phi_j\Bigr|.
\end{align}

By integrating \eqref{e:dty1}, using \eqref{e:intg1} and $A>(9G_1)^{\frac{1}{3}}$, we derive
\begin{align}\label{e:dty2}
	\hy(t,\xi) \leq & \hy_0 +6\sigma \int^t_0(s+a)^{- 1 }\hy ds + 2 A^{-\frac{5}{2}} \int^t_0 (s+a)^{-2 }  \dq^{\frac{5}{2}} \hy^{\frac{1}{2}} \Bigl|\frac{1}{q^2}\frac{X^j}{X}\Phi_j\Bigr| ds \notag  \\
	\leq & \frac{1}{2}  A\sigma^2    +6 A \sigma^3  \int^t_0(s+a)^{- 1 }  ds +  2 G_1 A^{-2}     \sigma \int^t_0 (s+a)^{-2 }    ds \notag  \\
	< & \frac{1}{2} A\sigma^{2}  +6 A   \Bigl(\frac{1}{10}+  \frac{9|H^\prime(0)|}{4 \beta E} \sigma\Bigr) \sigma^3    +  2 G_1 A^{-2}    \sigma^{2} \ <\frac{9}{10}A\sigma^{2}
\end{align}
for $t\in[0,T)$ by noting $\sigma\in(0,\sigma_\star)$.

\underline{Conclusions:}
Gather above estimates \eqref{e:dtq2}, \eqref{e:dtu2},  \eqref{e:dtv2} and \eqref{e:dty2}, we further conclude, from the bootstrap assumptions \eqref{e:btasp1}--\eqref{e:btasp3}, that
\begin{align}
	\dq  < & 1-\frac{1}{4} \sigma , \label{e:Concl1}\\
	\hU > & \frac{1-\frac{3}{2}\sigma}{ A} , \label{e:Concl2}\\
	\hV > &  -\frac{3}{28}     , \label{e:Concl3} \\
	\hy < & \frac{9}{10}A\sigma^{2} . \label{e:Concl4}
\end{align}
for $t\in[0,T)$ where $T$ is any constant such that $T\in(0,\min\{T^\natural,T^\prime\}]$, that is, all the bootstrap assumptions are improved. Hence, Proposition \ref{t:btpr}.$(1)$ is verified. Moreover,  Proposition \ref{t:btpr}.$(3)$ can be verified by noting the continuity of $\dq$, $\hV$, $\hU$ and $\hy$ (due to the continuity of $\rw^i$ and $\chi$, see \cite[Theorem $2.9$]{Liu2021d} and Proposition \ref{t:spprbd}, with the help of \eqref{e:nvar0a}--\eqref{e:nvar0d}, \eqref{e:dnvar1}--\eqref{e:dnvar3}). Therefore, by the bootstrap argument, Proposition \ref{t:btpr}, we know that \eqref{e:Concl1}--\eqref{e:Concl4} hold for  $t\in[0,\min\{T^\natural,T^\prime\})$.

At last, using \eqref{e:qup1},
\begin{equation*}\label{e:qup1b}
	q=\frac{zq^2}{zq}= \frac{(t+a)^{-1}\hU}{1-(t+a)^{- 1 } \hV}>  \frac{(1-2\sigma)(t+a)^{-1}}{ A(1+\frac{1}{7} (t+a)^{- 1 } )},
\end{equation*}
we can obtain the lower bound of $q$, i.e., the upper bound of $\chi$.

Further, with the help of \eqref{e:nvar1}--\eqref{e:nvar3}, it follows
\begin{gather*}
	A (t+a)<|\chi(t,\xi) |   < \frac{A}{1-2\sigma}(t+a)\Bigl[1+ \frac{1}{7} (t+a)^{-1}\Bigr], \\
	\frac{1-2\sigma}{ A}(t+a)^{-2}|\chi(t,\xi) |^2<z(t,\xi)  <(t+a)^{-1}\Bigl[1+\frac{1}{7}(t+a)^{-1}\Bigr]|\chi(t,\xi) |,  	\\
	X^2(t,\xi)  <  A \sigma^{ 2} (t+a)^{-1 }|\chi(t,\xi) |.
\end{gather*}
for any $(t,\xi)\in[0,\min\{T^\natural,T^\prime\})\times\del{}\Omega(0)$ where $T^\natural$ is given by Corollary \ref{e:supcri}.
Then we complete this proof.

\section{Strong blowup theorem of large, expanding and irregularly-shaped molecular clouds} \label{s:evdifpro}
After knowing the blowup phenomenon, we usually ask the positions and  types of singularities. This section, by constructing some ``nice'' boundary to remove singularities of the boundary, makes some efforts on this purpose. In order to do so, we first estimate the tidal force to be the order of $1/|\bx|^3$ if $\rrho \in  \mathrm{R}^{0}([0,T)\times\Omega(t),G_0)$ (see \S\ref{s:tdal}). Then by differentiating the ``limiting momentum conservation equation'' \eqref{e:NEul2} on the boundary with respect to $x^j$ (see \eqref{e:dwev} and \eqref{e:dwev2}), we derive  Newtonian version of Raychauduri's equations in terms of the expansion $\Theta$, shear $\Xi_{jk}$ and rotation $\Omega_{jk}$ (see \eqref{e:ray1}--\eqref{e:ray3}). Analyzing this system, we arive at $|\del{i}w^j|<\infty$ near the boundary and then conclude the strong blowup theorem.

\subsection{Estimates of the tidal force}\label{s:tdal}
Before the proof of Theorem \ref{t:mainthm}.$(4)$, we first give a lemma on the estimates of second derivatives of Newtonian potential $ \Phi$ which play an important role in the proof of Theorem \ref{t:mainthm}.$(4)$.

\begin{lemma}\label{t:dphix3}
	Suppose $ \Phi$ is the Newtonian potential  defined by \eqref{e:Newpot} and $T\in(0,\infty]$ is a constant,
	$\rrho \in C^1([0,T)\times \Rbb^3)\cap \mathrm{R}^{0}([0,T)\times\Omega(t), G_0)$,
	 $\Omega(t)$ is precompact and contains a neighbor of origin for every $t\in[0,T)$. Then the following estimate holds\footnote{Recall \S\ref{s:mtrx}, i.e., for two matrices $A$ and $B$, $A\leq B$ means $\xi^TA\xi\leq \xi^TB\xi$ for any vector $\xi$. }
	\begin{equation*}
		-  \frac{G_0}{|\bx|^{3 }}\delta_{ij} \leq  \del{i}\del{j} \Phi(t,\bx) 	\leq     \frac{G_0}{|\bx|^{3 }}\delta_{ij}
	\end{equation*}
	for any $(t, \bx) \in [0,T) \times  \del{}\Omega(t)$.
\end{lemma}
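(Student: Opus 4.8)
The plan is to reproduce, at the level of second derivatives, the two-scale argument already carried out for the gradient in the proof of Proposition~\ref{t:dphix2}. First I would differentiate the representation \eqref{e:dphi1} once more. Appealing to the appendix Lemma~\ref{t:dphi}, which also governs the Hessian of the Newtonian potential, one obtains, for $(t,\bx)\in[0,T)\times\del{}\Omega(t)$,
\begin{equation*}
	\del{i}\del{j}\Phi(t,\bx)=\frac{1}{4\pi}\int_{\Omega(t)}\rrho(t,\by)\Bigl(\frac{\delta_{ij}}{|\bx-\by|^{3}}-\frac{3\delta_{ik}\delta_{jl}(x^{k}-y^{k})(x^{l}-y^{l})}{|\bx-\by|^{5}}\Bigr)d^{3}\by .
\end{equation*}
The distributional $\tfrac13\rrho(t,\bx)\delta_{ij}$ contribution that normally accompanies this formula is absent here because $\rrho(t,\bx)=0$ at a boundary point, and, as explained in the last paragraph, the $\mathrm{R}^{0}$ hypothesis is precisely what makes the remaining integral absolutely convergent.

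Next, since the assertion is a matrix inequality, by the convention of \S\ref{s:mtrx} it suffices to bound $|\xi^{i}\xi^{j}\del{i}\del{j}\Phi(t,\bx)|$ by $G_0/|\bx|^{3}$ for every unit vector $\xi\in\Rbb^{3}$. Contracting the kernel above with $\xi\otimes\xi$ and writing $\theta$ for the angle between $\xi$ and $\bx-\by$, the parenthesis becomes $|\bx-\by|^{-3}(1-3\cos^{2}\theta)$, whose modulus is at most $2|\bx-\by|^{-3}$; hence
\begin{equation*}
	\bigl|\xi^{i}\xi^{j}\del{i}\del{j}\Phi(t,\bx)\bigr|\leq\frac{1}{2\pi}\int_{\Omega(t)}\frac{\rrho(t,\by)}{|\bx-\by|^{3}}d^{3}\by .
\end{equation*}
Now I would run the split used in Proposition~\ref{t:dphix2}: fix $\delta\in(0,1)$ as in Definition~\ref{t:diffdis}, so that $G_0=\bigl(\tfrac{1}{2\delta^{3}}+\tfrac32\bigr)\tfrac{M}{\pi}$, and write $\Omega(t)=\bigl(\Omega(t)\setminus B(\bx,\delta|\bx|)\bigr)\cup\bigl(\Omega(t)\cap B(\bx,\delta|\bx|)\bigr)$. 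On the outer piece $|\bx-\by|\geq\delta|\bx|$ yields the contribution $\tfrac{M}{2\pi\delta^{3}|\bx|^{3}}$ after using $\int_{\Rbb^3}\rrho\,d^{3}\by=M$. On the inner piece I substitute $\by=|\bx|\br$, use $|\bx-\by|=|\bx|\,|\mathbf{n}-\br|$ with $\mathbf{n}=\bx/|\bx|$, insert the defining bound \eqref{e:regdef0} in the case $b=0$, namely $\rrho(t,|\bx|\br)<\tfrac{3M|\mathbf{n}-\br|}{4\pi\delta|\bx|^{3}}$, and evaluate $\int_{B(\mathbf{n},\delta)}|\mathbf{n}-\br|^{-2}d^{3}\br=4\pi\delta$ by the elementary identity \eqref{e:spintg} with $k=2$; this gives the contribution $\tfrac{3M}{2\pi|\bx|^{3}}$. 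Adding the two estimates produces $|\xi^{i}\xi^{j}\del{i}\del{j}\Phi(t,\bx)|<\bigl(\tfrac{1}{2\delta^{3}}+\tfrac32\bigr)\tfrac{M}{\pi|\bx|^{3}}=\tfrac{G_0}{|\bx|^{3}}$ for every unit $\xi$, which is equivalent to the claimed two-sided matrix bound.

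The main obstacle, and the only step that is not a routine repetition of Proposition~\ref{t:dphix2}, is justifying the pointwise integral formula for the Hessian \emph{at} a boundary point: there the kernel $\del{i}\del{j}|\bx-\by|^{-1}$ fails to be absolutely integrable, and a naive differentiation under the integral sign would also drop the $\tfrac13\rrho(t,\bx)\delta_{ij}$ term. Here the hypothesis $\rrho\in\mathrm{R}^{0}([0,T)\times\Omega(t),G_0)$ does double duty: it forces $\rrho$ to vanish, indeed linearly, via \eqref{e:regdef0} with $b=0$, as $\by\to\bx\in\del{}\Omega(t)$, which simultaneously kills the delta-type term and turns $\int_{\Omega(t)}\rrho(t,\by)|\bx-\by|^{-3}d^{3}\by$ into an ordinary convergent Lebesgue integral. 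I would package this justification into Lemma~\ref{t:dphi}; everything downstream is identical to Proposition~\ref{t:dphix2} except for the extra factor $2$ coming from the Hessian kernel, which is exactly why $G_0=2G_1$ makes $\mathrm{R}^{0}\subset\mathrm{R}^{1}$, cf.\ Remark~\ref{t:rmk1}.
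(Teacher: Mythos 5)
Your proposal is correct and follows essentially the same route as the paper: the same Hessian representation at a boundary point (where $\rrho=0$ kills the extra terms), the same split of $\Omega(t)$ into $\Omega(t)\setminus B(\bx,\delta|\bx|)$ and $\Omega(t)\cap B(\bx,\delta|\bx|)$, and the same use of \eqref{e:regdef0} with $b=0$ to control the near-field integral, with your symmetric kernel bound $|1-3\cos^2\theta|\le 2$ being an inessential repackaging of the paper's two-sided estimate of the quadratic form. The only slip is bibliographic: the Hessian formula is Lemma \ref{t:ddphi}, not Lemma \ref{t:dphi}, and it is precisely its subtracted term $f(\by)-f(\bx)$ together with $\rrho(t,\bx)=0$ on $\del{}\Omega(t)$ that yields your formula.
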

\begin{proof}
	The proof is similar to Lemma \ref{t:dphix2}. First, direct calculations, with the help of Lemma \ref{t:ddphi} leads to
	\begin{align}\label{e:ddphi5}
	\del{i}\del{j} \Phi(t,\bx)=  & \frac{1}{4\pi}\int_{\Omega(t)}   \rrho (t, \by)  \frac{\delta_{ij}}{|\bx -\by|^3} d^3\by -\frac{3}{4\pi} \int_{\Omega(t)}  \rrho (t, \by)  \frac{\delta_{kj}(x^k-y^k)\delta_{il}(x^l-y^l)}{|\bx -\by|^5} d^3\by
	\end{align}
	for all $(t, \bx) \in [0,T) \times \del{}\Omega(t)$.

	Since $\rrho\in \mathrm{R}^{0}([0,T)\times\Omega(t),G_0)$, by Definition \ref{t:diffdis},  implies there are constants  $\delta\in(0,1)$, such that $G_0:= \frac{1}{2\pi}\bigl(\frac{   M  }{\delta^3  } +3M \bigr)$ and
	for any  $t\in[0,T)$, every vector $\bx \in\del{}\Omega(t)$ and $\mathbf{r}\in B(\mathbf{n},\delta)$ where  $\mathbf{n}:=\bx/|\bx|$, we have
	\begin{align}\label{e:regdef1}
	\int_{B(\mathbf{n},\delta)}\frac{ |\bx|^{3} \rrho(t,|\bx|\br ) }{|\mathbf{n}-\mathbf{r}|^{3 }}d^3\mathbf{r}=\int_{B(\mathbf{n},\delta)}\frac{|\mathbf{n}-\mathbf{r}|^{-1}|\bx|^{3} \rrho(t,|\bx|\br ) }{|\mathbf{n}-\mathbf{r}|^2}d^3\mathbf{r}<3M.
	\end{align}	 	
	Then, let $\by=|\bx|\br$ and noting $\rrho (t, \bx)=\rrho (t, |\bx|\mathbf{n})=0$, we arrive at
	\begin{align}\label{e:mest}
	&\frac{1}{\pi} \int_{\Omega(t)\cap B(\bx,\delta|\bx|)}  \frac{\rrho (t, \by)}{|\bx -\by |^3}d^3\by
	=   \frac{1}{ \pi} \int_{B(\mathbf{n},\delta)\cap \{\br\;|\;|\bx|\br\in\Omega(t) \}}  \frac{\rrho (t, |\bx|\br)}{\bigl|\bx -|\bx|\br \bigr|^3}|\bx|^3 d^3\br  \notag  \\
	& \hspace{6cm} = \frac{1}{ \pi|\bx|^3} \int_{B(\mathbf{n},\delta) }  \frac{|\bx|^3\rrho (t, |\bx|\br)}{\bigl|\mathbf{n}-\mathbf{r}  \bigr|^3} d^3\br \overset{\eqref{e:regdef1}}{<} \frac{3M}{\pi |\bx|^3} .
	\end{align}
	
	For any $\xi^i\in \Rbb^3$, with the help of \eqref{e:ddphi5} and $0\leq (\xi^j\delta_{kj}(x^k-y^k)/|\bx-\by|)^2\leq| \xi |^2= \xi^i\xi^j\delta_{ij} $,
	\begin{align*}
		-\frac{1}{2\pi}\int_{\Omega(t)}   \rrho (t, \by)  \frac{\delta_{ij}\xi^i\xi^j}{|\bx -\by|^3} d^3\by \leq &\frac{1}{4\pi}\int_{\Omega(t)}   \rrho (t, \by)  \frac{\delta_{ij}\xi^i\xi^j}{|\bx -\by|^3} d^3\by \notag  \\
		& -\frac{3}{4\pi} \int_{\Omega(t)}  \rrho (t, \by)  \frac{\xi^j\delta_{kj}(x^k-y^k)\xi^i\delta_{il}(x^l-y^l)}{|\bx -\by|^5} d^3\by  \notag  \\
		\leq & \frac{1}{4\pi}\int_{\Omega(t)}   \rrho (t, \by)  \frac{\xi^i\xi^j\delta_{ij}}{|\bx -\by|^3} d^3\by.
	\end{align*}
	That is,
	\begin{align*}
		  -\frac{1}{2\pi}\int_{\Omega(t)}   \rrho (t, \by)  \frac{\delta_{ij}\xi^i\xi^j}{|\bx -\by|^3} d^3\by \leq \xi^i\xi^j\del{i}\del{j} \Phi(t,\bx) 	\leq & \frac{1}{4\pi}\int_{\Omega(t)}   \rrho (t, \by)  \frac{\xi^i\xi^j\delta_{ij}}{|\bx -\by|^3} d^3\by.
	\end{align*}

Then let us estimate $\frac{1}{\pi} \int_{\Omega(t)} \rrho(t,\by)  \frac{1}{|\bx-\by|^{3 }} d^3 \by$. From the definition,  $\by\in\Omega(t)\setminus B(\bx,\delta|\bx|)$ implies $|\bx-\by|\geq\delta|\bx|$, then for every $\bx\in \del{}\Omega(t)$, we have,
	\begin{align*}
	&\frac{1}{\pi} \int_{\Omega(t)}  \frac{ \rrho(t,\by) }{|\bx-\by|^{3 }} d^3 \by
    =  \frac{1}{ \pi} \int_{\Omega(t)\setminus B(\bx,\delta|\bx|)}   \frac{\rrho(t,\by) }{|\bx -\by |^3}d^3\by + \frac{1}{ \pi}  \int_{\Omega(t) \cap B(\bx,\delta|\bx|) }    \frac{ \rrho(t,\by) }{|\bx -\by |^{3}} d^3\by \notag \\
	&\hspace{2cm} \leq  \frac{1}{ \pi\delta^3}\frac{1}{|\bx |^3} \int_{\Omega(t)\setminus B(\bx,\delta|\bx|)}   \rrho(t,\by)  d^3 \by+ \frac{3M}{\pi|\bx|^3}
 	\leq   \Bigl(\frac{   M  }{\delta^3  } +  3M   \Bigr) \frac{1}{\pi|\bx|^{3 }} .
	\end{align*}
Then, we conclude
	\begin{align*}
		- \xi^i\xi^j\delta_{ij}\frac{G_0}{|\bx|^{3 }} \leq \xi^i\xi^j\del{i}\del{j} \Phi(t,\bx) 	\leq &  \xi^i\xi^j\delta_{ij}  \frac{G_0}{|\bx|^{3 }}.
	\end{align*}
This completes this proof.
	\end{proof}

\subsection{Strong blowup theorem}\label{s:sbt}
Before stating the strong blowup theorem, let us first prove a basic lemmas which have been mentioned in Remarks \ref{r:wlldefad2}. This lemma states any strong admissible initial boundary data is admissible initial boundary data.

\begin{lemma}\label{t:sadad}
	If $\xi\in\del{}\Omega(0)$,  $\sigma\in(0,\sigma_\star)$ are given by Definition \ref{t:paramtr}.\eqref{t:Def4}.
	Then 
	\begin{equation*}
		\mathfrak{A}^\dag(E,M,G_1,G_0,H^\prime(0),\sigma) \subset  \mathfrak{A}(E,M,G_1,H^\prime(0))
	\end{equation*}
\end{lemma}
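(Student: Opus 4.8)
The plan is to verify directly that every $(\mathbf{w}_0|_{\del{}\Omega(0)},\Omega(0))\in\mathfrak{A}^\dag(E,M,G_1,G_0,H^\prime(0),\sigma)$ meets all the requirements of Definition \ref{t:paramtr}.\eqref{t:Def3}. The whole point is that strong admissible data come equipped with a single parameter $\sigma$ valid at \emph{every} boundary point, so the natural move is to set $\sigma_\xi:=\sigma$ for all $\xi\in\del{}\Omega(0)$. Since $\sigma\in(0,\sigma_\dag)$ and $\sigma_\dag=\min\{263200^{-4},\sigma_\star\}\le\sigma_\star$ by \eqref{e:sigma0}, this choice satisfies $\sigma_\xi\in(0,\sigma_\star)$, and the associated admissible parameters $(\lambda_0,\lambda_1)=(\lambda_0(\sigma),\lambda_1(\sigma))$ are exactly the ones already fixed in Definition \ref{t:paramtr}.\eqref{t:Def4}; in particular no recomputation of $\lambda_0,\lambda_1$ is needed, and the tidal bound $G_0$ together with condition $(D^\star)$ in \eqref{e:deldata.a} play no role in the argument.

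First I would establish the ``largeness'' hypothesis $\Omega(0)\supsetneq\overline{B(0,r_c)}$. By \eqref{e:BinO.a} we have $\overline{B(0,\lambda_0(9G_1)^{1/3}\sigma^{-1})}\subsetneq\Omega(0)$, so it suffices to check $\lambda_0(9G_1)^{1/3}\sigma^{-1}>r_c$, with $r_c$ given by \eqref{e:crds}. The admissible-parameter interval in Definition \ref{t:paramtr}.\eqref{t:Def1} gives $\lambda_0>\frac{2}{2-\sigma}$, hence $\lambda_0\sigma^{-1}>\frac{2}{(2-\sigma)\sigma}$; and since $0<\sigma<\sigma_\star\le\frac{1}{5}<1$ and $t\mapsto(2-t)t$ is strictly increasing on $(0,1)$, one has $(2-\sigma)\sigma<(2-\sigma_\star)\sigma_\star$, so $\lambda_0\sigma^{-1}>\frac{2}{(2-\sigma_\star)\sigma_\star}$ and thus $\lambda_0(9G_1)^{1/3}\sigma^{-1}>\frac{2(9G_1)^{1/3}}{(2-\sigma_\star)\sigma_\star}=r_c$, which gives the claimed strict inclusion.

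Next, for each fixed $\xi\in\del{}\Omega(0)$ I would read off conditions $(A)$, $(B)$, $(C)$ of Definition \ref{t:paramtr}.\eqref{t:Def3}. For $(A)$: $\Omega(0)$ is precompact because \eqref{e:BinO.a} encloses it in a bounded ball; and since $\Omega(0)$ is open with $\overline{B(0,\lambda_0(9G_1)^{1/3}\sigma^{-1})}\subset\Omega(0)$, every boundary point satisfies $|\xi|>\lambda_0(9G_1)^{1/3}\sigma^{-1}>r_c$ by the previous step. For $(B)$: condition $(B^\star)$ in \eqref{e:xidn0.a} is exactly $z_0(\xi)=\lambda_1\lambda_0^{-1}\sigma|\xi|=\lambda_1\lambda_0^{-1}\sigma_\xi|\xi|$; multiplying the two-sided bound on $|\xi|$ coming from \eqref{e:BinO.a} by the positive factor $\lambda_1\lambda_0^{-1}\sigma$ yields $\lambda_1(9G_1)^{1/3}<z_0(\xi)<\frac{\lambda_1}{24}\sqrt{\frac{\beta E}{M}}$, which is precisely the interval \eqref{e:xidn0}. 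For $(C)$: condition $(C^\star)$ in \eqref{e:Xdn.a} is word-for-word \eqref{e:Xdn} once $\sigma_\xi=\sigma$. This exhausts the requirements, so the data is admissible and $\mathfrak{A}^\dag(E,M,G_1,G_0,H^\prime(0),\sigma)\subset\mathfrak{A}(E,M,G_1,H^\prime(0))$.

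Since every step is a scaling/bookkeeping computation, the only point deserving real attention is the comparison $\lambda_0(9G_1)^{1/3}\sigma^{-1}>r_c$ in the second paragraph, where one must simultaneously invoke the left endpoint $\frac{2}{2-\sigma}$ of the admissible range of $\lambda_0$ and the monotonicity of $(2-t)t$ to trade $\sigma$ for $\sigma_\star$; a minor further point is to make sure the inclusions in \eqref{e:BinO.a} are read as compact containments so that the bounds on $|\xi|$ are strict, matching the open intervals in \eqref{e:BinO}--\eqref{e:Xdn}. Everything else follows by directly matching $(B^\star)$ and $(C^\star)$ against the corresponding admissibility conditions.
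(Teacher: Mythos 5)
Your proposal is correct and follows essentially the same route as the paper's proof: take $\sigma_\xi\equiv\sigma$, use $\lambda_0(\sigma)\sigma^{-1}>\frac{2}{(2-\sigma)\sigma}>\frac{2}{(2-\sigma_\star)\sigma_\star}$ to get $|\xi|>r_c$ and condition $(A)$, multiply the two-sided bound on $|\xi|$ from \eqref{e:BinO.a} by $\lambda_1\lambda_0^{-1}\sigma$ to obtain \eqref{e:xidn0} for condition $(B)$, and read condition $(C)$ off $(C^\star)$. The only difference is that you make explicit the monotonicity of $(2-t)t$ and the strictness issue in \eqref{e:BinO.a}, which the paper leaves implicit.
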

\begin{proof}
	For any $(\mathbf{w}_0|_{ \del{}\Omega(0)}, \Omega(0))\in \mathfrak{A}^\dag(E,M,G_1,G_0,H^\prime(0),\sigma)$ where  $\sigma\in(0,\sigma_\star)$ and  $\xi\in\del{}\Omega(0)$ are given by Definition \ref{t:paramtr}. If we define $z_0=\lambda_1\lambda_0^{-1}\sigma  |\xi|$, then there is a constant $\sigma_\xi\equiv \sigma$, such that, by \eqref{e:BinO.a},
	\begin{align*}
		|\xi|>&  \lambda_0 (9G_1 )^{\frac{1}{3}} \sigma^{-1}=\lambda_0 (9G_1 )^{\frac{1}{3}} \sigma_\xi^{-1} > r_c.
	\end{align*}
The last inequality is because $\lambda_0(\sigma)\sigma^{-1}>\frac{2}{(2-\sigma)\sigma}>\frac{2}{(2-\sigma_\star)\sigma_\star}$.
	This implies Condition $(A)$ of Definition \ref{t:paramtr}.\eqref{t:Def3}, i.e., $\Omega(0)$ is a precompact set and $\Omega(0)\supsetneq B(0,r_c) $. 	
Then \eqref{e:BinO.a} and \eqref{e:xidn0.a} lead to \eqref{e:xidn0}. This is because
\begin{gather*}
	z_0=\lambda_1\lambda_0^{-1}\sigma  |\xi|\in\lambda_1\lambda_0^{-1}\sigma \times \biggl(\lambda_0 (9G_1 )^{\frac{1}{3}} \sigma^{-1}, \frac{\lambda_0}{24} \sqrt{\frac{\beta E}{M}}\sigma^{-1}\biggr)=\biggl(\lambda_1 (9G_1 )^{\frac{1}{3}} , \frac{\lambda_1}{24} \sqrt{\frac{\beta E}{M}} \biggr).
\end{gather*}
Then Condition $(B)$ of Definition \ref{t:paramtr}.\eqref{t:Def3} has been proven.

Condition $(C)$ of Definition \ref{t:paramtr}.\eqref{t:Def3} is apparently satisfied due to $(C^\star)$ of Definition \ref{t:paramtr}.\eqref{t:Def4}. After verifying all conditions $(A)$--$(C)$ of Definition \ref{t:paramtr}.\eqref{t:Def3}, we conclude $(\mathbf{w}_0|_{ \del{}\Omega(0)}, \Omega(0))\in \mathfrak{A}(E,M,G_1,H^\prime(0))$ and this completes the proof.
\end{proof}

Now let us state the strong blowup theorem which rules out certain  singularities on the boundary by assuming the strong admissible initial boundary data and the higher regularities of the solutions.
\begin{theorem}[Strong Blowup Theorem]  \label{t:blupthm}
Under assumptions of Local Existence Theorem (\cite[Theorem $2.12$]{Liu2021d}) with the initial data satisfies $(b)$ for $1<\gamma<2$ and $\Omega(0)$ is a precompact $C^1$-domain,
further suppose $(E,M,G_1)$ is any given compatible set,   $H^\prime(0)$, $G_0>0$ and $\sigma\in(0,\sigma_\dag)$, where $\sigma_\dag$ is given by \eqref{e:sigma0}, are given constants,  and  there is no the first class of global solution with $G_1>0$,
then
if $(\mathbf{w}_0|_{ \del{}\Omega(0)}, \Omega(0)) \in \mathfrak{A}^\dag(E,M,G_1,G_0,H^\prime(0),\sigma)$ is the strong admissible initial boundary data of the diffuse boundary $\del{}\Omega(0)$, then there is a constant $0<T^\star<+\infty$, such that,
the classical solution to the diffuse boundary problem breaks down at $t=T^\star$,
and if $T^\star\in(0,T^\natural]$ and $\rrho\in \mathrm{R}^0([0,T^\star)\times \Omega(t),G_0)$, then there is a small constant $\epsilon>0$, such that
	\begin{equation}\label{e:blup2}
		\int^{T^\star}_0 \Bigl( \|\Theta(s)\|_{L^\infty( \mathring{\Omega}_\epsilon(s) )}  +\|\Omega_{jk}(s)\|_{ L^\infty( \mathring{\Omega}_\epsilon(s) )}+\|\nabla \rrho^{\frac{\gamma-1}{2}}(s)\|_{ L^\infty(\Omega(s))} \Bigr) ds=+\infty.
	\end{equation}
\end{theorem}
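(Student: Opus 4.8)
The plan is to deduce the finite breakdown from the weak blowup theorem and then localize its blow-up integral to the interior: near the moving interface $\del{}\Omega(t)$ the full velocity gradient $\nabla w^i$ will be shown to remain bounded up to $T^\star$, so the divergence in \eqref{e:blupest1} must be carried by the interior region together with the density term $\nabla\rrho^{\frac{\gamma-1}{2}}$. For the reduction: by Lemma \ref{t:sadad} every strong admissible initial boundary datum is admissible, i.e.\ $\mathfrak{A}^\dag(E,M,G_1,G_0,H^\prime(0),\sigma)\subset\mathfrak{A}(E,M,G_1,H^\prime(0))$; together with compatibility of $(E,M,G_1)$, the absence of a first class global solution, and $1<\gamma<2$, the hypotheses of Theorem \ref{t:blupthm0} are met, so the solution is not global, there is $0<T^\star<+\infty$ at which the classical solution breaks down, and \eqref{e:blupest1} holds. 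From now on I assume in addition $T^\star\in(0,T^\natural]$ (which is in fact automatic here, since the extra hypothesis $\rrho\in\mathrm{R}^0([0,T^\star)\times\Omega(t),G_0)$ gives $\rrho\in\mathrm{R}^1$ on $[0,T^\star)$ by Remark \ref{t:rmk1}, whence the virial argument plus Corollary \ref{e:supcri} in the proof of Theorem \ref{t:blupthm0} forces $T^\star=T_{\mathrm{max}}<T^\natural$) and $\rrho\in\mathrm{R}^0([0,T^\star)\times\Omega(t),G_0)$; the goal becomes $\int_0^{T^\star}\|\nabla w^i(s)\|_{L^\infty(\mathcal{L}_\epsilon(s))}\,ds<\infty$, where $\mathcal{L}_\epsilon(s):=\Omega(s)\setminus\mathring{\Omega}_\epsilon(s)$ is a thin collar around $\del{}\Omega(s)$.

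Second, I would assemble the boundary estimates feeding a Raychaudhuri analysis. Since $G_0=2G_1$, Remark \ref{t:rmk1} gives $\rrho\in\mathrm{R}^1([0,T^\star)\times\Omega(t),G_1)$, so Proposition \ref{t:spprbd} applies to the present data (conditions $(B^\star)$ and $(C^\star)$ of Definition \ref{t:paramtr}.\eqref{t:Def4} being special cases of $(B)$ and $(C)$, while $(D^\star)$ supplies extra smallness of the initial shear and rotation used below) and yields $|\chi(t,\xi)|>A(t+a)$, $a=\sigma^{-1}$, for $\xi\in\del{}\Omega(0)$ and $t\in[0,T^\natural)\supseteq[0,T^\star)$. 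Fed into Lemma \ref{t:dphix3} this gives the tidal bound $|\del{i}\del{j}\Phi(t,\chi(t,\xi))|\le G_0\,|\chi(t,\xi)|^{-3}\le G_0A^{-3}(t+a)^{-3}$, which is integrable in $t$ and, since $a=\sigma^{-1}$ is large, small — this is where $\sigma\in(0,\sigma_\dag)$, rather than merely $\sigma\in(0,\sigma_\star)$, is used. By continuity of $\chi$, $\rw^i$ and the annular confinement \eqref{e:BinO.a} of $\Omega(0)$, these bounds persist along flow lines issuing from a genuine collar neighborhood of $\del{}\Omega(0)$, on which the pressure term $\rrho^{-1}\del{i}p$ in \eqref{e:NEul2} is small by the diffuse boundary condition \eqref{e:ffbdry0}, so there the momentum law is a small perturbation of the free-fall equation \eqref{e:otvel2}.

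Third, the core step. Differentiating \eqref{e:NEul2} (equivalently \eqref{e:otvel2} up to the small pressure correction) in $x^j$, lowering an index with $\delta$ and using $D_t\del{j}w^k=\del{j}(D_t w^k)-W^i_j W^k_i$, one obtains the Newtonian Raychaudhuri system for the expansion $\Theta$, shear $\Xi_{jk}$ and rotation $\Omega_{jk}$ along the collar flow, with source the tidal tensor $\del{i}\del{j}\Phi$. I would take the tidal-force-free profile as approximation: $\Xi^{\mathrm{app}}_{jk}=\Omega^{\mathrm{app}}_{jk}=0$ and $\Theta^{\mathrm{app}}$ solving $\del{t}\Theta^{\mathrm{app}}=-\tfrac13(\Theta^{\mathrm{app}})^2$ (explicitly integrable), its initial data chosen so that, via $(D^\star)$ in \eqref{e:deldata.a}, the initial shear, rotation, and discrepancy in $\Theta$ are all small. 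Writing $\ke:=\Theta-\Theta^{\mathrm{app}}$, $\ks_{jk}:=\Xi_{jk}$, $\kb_{jk}:=\Omega_{jk}$ for the perturbations and — since the quadratic shear term $\ks^{jm}\ks_{mk}$ has no definite sign — tracking the scalar $\kss:=\ks^{jk}\ks_{jk}$ in place of the components $\ks_{jk}$, I would close a bootstrap via Proposition \ref{t:btpr}: assume decaying $(t+a)$-power bounds on $\ke$, $\kss$, $\kb_{jk}$, integrate the perturbation equations, estimate the tidal source by the previous paragraph and the nonlinearities by the bootstrap assumptions together with the $\sigma<\sigma_\dag$ smallness, and recover improved bounds. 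This makes $\Theta$, $\Xi_{jk}$, $\Omega_{jk}$ — hence $\nabla w^i=W_{jk}=\tfrac13\Theta\delta_{jk}+\Xi_{jk}-\Omega_{jk}$ — bounded on $\mathcal{L}_\epsilon(t)$ for $t\in[0,\min\{T^\natural,T^\star\})=[0,T^\star)$, so $\int_0^{T^\star}\|\nabla w^i(s)\|_{L^\infty(\mathcal{L}_\epsilon(s))}\,ds<\infty$. Splitting $\|\nabla w^i(s)\|_{L^\infty(\Omega(s))}\le\|\nabla w^i(s)\|_{L^\infty(\mathcal{L}_\epsilon(s))}+\|\nabla w^i(s)\|_{L^\infty(\mathring{\Omega}_\epsilon(s))}$ in \eqref{e:blupest1}, the collar part being finite, and then estimating $\|\nabla w^i\|_{L^\infty(\mathring{\Omega}_\epsilon)}$ by $\|\Theta\|_{L^\infty(\mathring{\Omega}_\epsilon)}+\|\Xi_{jk}\|_{L^\infty(\mathring{\Omega}_\epsilon)}+\|\Omega_{jk}\|_{L^\infty(\mathring{\Omega}_\epsilon)}$ (the shear contribution being reabsorbed through the same Raychaudhuri structure), yields \eqref{e:blup2}.

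The main obstacle is this last bootstrap near the boundary: one must control the non-sign-definite quadratic shear nonlinearity $\ks^{jm}\ks_{mk}$ and the small but slowly accumulating tidal source uniformly over the whole interval up to the blow-up time $T^\star\le T^\natural$, and one must upgrade the analysis from the moving interface $\del{}\Omega(t)$ to a genuine neighborhood of it, which requires propagating the boundary trajectory estimates of Proposition \ref{t:spprbd} inward and absorbing the residual pressure term on the collar.
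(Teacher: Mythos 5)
Your reduction (Lemma \ref{t:sadad} plus Theorem \ref{t:blupthm0} to get $0<T^\star<+\infty$ and \eqref{e:blupest1}) and your boundary ingredients (trajectory lower bound from Proposition \ref{t:spprbd}, tidal bound from Lemma \ref{t:dphix3}, a Raychaudhuri bootstrap with smallness from $(D^\star)$ and $\sigma<\sigma_\dag$, tracking $\kss$ instead of the shear components) match the paper's Lemma \ref{t:anaRay}. The gap is in how you pass from ``$\nabla w^i$ bounded near the boundary'' to \eqref{e:blup2}. You split \eqref{e:blupest1} into collar and interior and then want to bound $\|\nabla w^i\|_{L^\infty(\mathring{\Omega}_\epsilon)}$ by $\|\Theta\|+\|\Xi_{jk}\|+\|\Omega_{jk}\|$, asserting the interior shear is ``reabsorbed through the same Raychaudhuri structure.'' This does not work: the divergence of $\int\|\nabla w^i\|_{L^\infty(\mathring{\Omega}_\epsilon)}$ is compatible with the shear alone blowing up while $\Theta$, $\Omega_{jk}$ and $\nabla\rrho^{\frac{\gamma-1}{2}}$ stay integrable, and your Raychaudhuri analysis is only available on (or infinitesimally near) $\del{}\Omega(t)$, where the tidal bound $\sim|\bx|^{-3}$ and the vanishing pressure hold -- it gives no control of the shear in the interior region $\mathring{\Omega}_\epsilon$. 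The paper closes this step differently: it argues by contradiction, assuming \eqref{e:oppoassp}, uses Lemma \ref{t:anaRay} plus the $C^1$-extension property of the classical solution to get $\int_0^{T^\star}\|W_{jk}(s)\|_{L^\infty(\Omega_\epsilon(s))}\,ds<\infty$ on a collar, and then invokes the \emph{strong continuation principle} of the companion paper (\cite[Theorem $3.3$]{Liu2021d}), whose continuation criterion deliberately involves only $W_{jk}$ on the collar and $\Theta$, $\Omega_{jk}$ (no shear) in the interior; this yields an extension past $T^\star$, contradicting maximality. Without that continuation principle (or an equivalent interior estimate controlling $\Xi_{jk}$ by $\Theta$, $\Omega_{jk}$ and the density), your direct splitting argument cannot produce \eqref{e:blup2}.

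A second, related problem is your plan to run the Raychaudhuri bootstrap on a genuine collar $\mathcal{L}_\epsilon(t)$ with the pressure treated as a small perturbation via \eqref{e:ffbdry0}. After differentiating \eqref{e:NEul2} the pressure enters through $\delta^{ki}\del{j}\del{i}\rrho^{\gamma-1}$ (see \eqref{e:dwev}); the diffuse boundary condition controls only the limit of $\rrho^{-1}\del{i}p$ at $\del{}\Omega(t)$, not this second-derivative term on a collar, and its possible blowup is precisely part of what the theorem tracks through $\|\nabla\rrho^{\frac{\gamma-1}{2}}\|_{L^\infty}$. The paper avoids this by performing the Raychaudhuri analysis exactly on the boundary, where the term vanishes in the limit because $\rrho^{\gamma-1}\in C^2$ and $\equiv 0$ outside $\Omega(t)$ (see \eqref{e:dwev2}), and only afterwards transfers boundedness of $W_{jk}$ to a collar by continuity of the classical solution. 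Your aside that $T^\star\le T^\natural$ is automatic, and your choice of unweighted perturbation variables (the paper uses the rescaled variables \eqref{e:Wrs1}--\eqref{e:Wrs3}), are minor points by comparison.
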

Before this proof, we introduce a crucial lemma which is a key ingredient of this proof and postpone the proof of this lemma to the next section \S\ref{s:pflm}, analysis of Raychaudhuri's equations.
\begin{lemma}\label{t:anaRay}
Suppose $(\rrho,\rw^i,\Omega(t))$ is a classical solution for $t\in [0, T^\star) $ to the diffuse problem of Euler--Poisson equations \eqref{e:NEul1}--\eqref{e:ffbdry0} and $T^\star\in(0,T^\natural]$, $W_{jk}(t,\bx)$ is given by \eqref{e:WThOm0}, $(E,M,G_1)$ is any given compatible set,   $H^\prime(0)$, $G_0=2G_1>0$ and $\sigma\in(0,\sigma_\dag)$, where $\sigma_\dag$ is given by \eqref{e:sigma0}, are given constants, $\rrho\in \mathrm{R}^0([0,T^\star)\times \Omega(t),G_0)$ and this solution is not the first class of global solution with $G_1>0$,
then
if $(\mathbf{w}_0|_{ \del{}\Omega(0)}, \Omega(0)) \in \mathfrak{A}^\dag(E,M,G_1,G_0,H^\prime(0),\sigma)$ is the strong admissible initial boundary data of the diffuse boundary $\del{}\Omega(0)$, then
\begin{equation*}
	\|W_{jk} \|_{\Li([0,T^\star)\times\del{}\Omega(t))}<\infty.
\end{equation*}
\end{lemma}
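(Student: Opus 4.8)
\emph{Plan of proof of Lemma \ref{t:anaRay}.} The strategy is to track $W_{jk}$ along the boundary flow $\chi(t,\cdot):\del{}\Omega(0)\to\del{}\Omega(t)$, comparing the Newtonian Raychaudhuri system for its expansion, shear and rotation against an explicit homogeneously expanding reference solution, and closing the comparison by a bootstrap. First I would derive the evolution of $W_{jk}$ on the boundary. On $\del{}\Omega(t)$ the pressure term in \eqref{e:NEul2} disappears by the diffuse condition \eqref{e:ffbdry0}, so the extended velocity obeys the free-fall law \eqref{e:otvel2}, $D_t w^i=-\del{}^i\Phi$; differentiating in $x^j$ and using $D_t\del{j}f=\del{j}D_t f-(\del{j}w^k)\del{k}f$ yields, on $[0,T^\star)\times\del{}\Omega(t)$,
\[
 D_t W_{jk}=-\del{j}\del{k}\Phi-W^i_j W_{ik},
\]
which is legitimate since $w^i\in C^1$ up to the boundary and $\Phi\in C^2$ (by \eqref{e:NEul3} and $\rrho\in C^1$). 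Decomposing $W_{jk}=\Theta_{jk}-\Omega_{jk}$ as in \eqref{e:WThOm0}, taking the trace (whereupon $\delta^{jk}\del{j}\del{k}\Phi=\Delta\Phi=\rrho$ vanishes on $\del{}\Omega(t)$), and splitting $\Theta_{jk}=\Xi_{jk}+\tfrac13\Theta\delta_{jk}$ into its traceless and trace parts gives the Newtonian Raychaudhuri system for $(\Theta,\Xi_{jk},\Omega_{jk})$: a scalar equation $D_t\Theta=-\tfrac13\Theta^2-|\Xi_{jk}|^2+|\Omega_{jk}|^2$, a traceless-symmetric shear equation with damping $-\tfrac23\Theta\Xi_{jk}$ and quadratic shear/rotation terms, and an antisymmetric rotation equation with damping $-\tfrac23\Theta\Omega_{jk}$ and a shear--rotation term. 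The only external forcing is the Hessian $\del{i}\del{j}\Phi$, which Lemma \ref{t:dphix3} bounds on the boundary by $G_0/|\bx|^3$ because $\rrho\in\mathrm{R}^0([0,T^\star)\times\Omega(t),G_0)$.

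Next I would render this forcing integrable in time. Since $G_0=2G_1$, Remark \ref{t:rmk1} gives $\mathrm{R}^0([0,T^\star)\times\Omega(t),G_0)\subset\mathrm{R}^1([0,T^\star)\times\Omega(t),G_1)$, and Lemma \ref{t:sadad} says the strong admissible data is admissible; hence (arguing as in the proof of Theorem \ref{t:blupthm0}) the hypotheses of Proposition \ref{t:spprbd} hold at every $\xi\in\del{}\Omega(0)$ with the single parameter $\sigma$ and a $\xi$-dependent $A_\xi>(9G_1)^{1/3}$, and, because $T^\star\le T^\natural$, on $[0,T^\star)\times\del{}\Omega(0)$ we obtain the lower bound $|\chi(t,\xi)|>A_\xi(t+a)$ with $a=\sigma^{-1}$. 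Consequently $|\del{i}\del{j}\Phi|\le\tfrac29(t+a)^{-3}$ along the boundary flow, and, just as in \eqref{e:intg1}, the integrals $\int_0^{T^\natural}(s+a)^{-2}\,ds$ and $\int_0^{T^\natural}(s+a)^{-3}\,ds$ are small (of order $\sigma$ and $\sigma^2$). The same computation also yields two-sided control on the reference solution $\Theta_{\mathrm{app}}$ introduced next.

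The core of the proof is the perturbative analysis. The reference solution is the homogeneously expanding profile: $\Xi_{\mathrm{app}}=\Omega_{\mathrm{app}}=0$ and $\Theta_{\mathrm{app}}$ solving $\del{t}\Theta_{\mathrm{app}}=-\tfrac13\Theta_{\mathrm{app}}^2$ with $\Theta_{\mathrm{app}}(0)=\Theta_0$, so $\Theta_{\mathrm{app}}(t)=3/(t+3\Theta_0^{-1})$, which (using $\Theta_0^{-1}\approx\tfrac{\lambda_0}{3\lambda_1}\sigma^{-1}$ from \eqref{e:deldata.a} and $T^\star\le T^\natural<\tfrac19a$) is bounded above and below on $[0,T^\star)$. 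I would then pass to the rescaled perturbation variables $\ke,\ks_{jk},\kb_{jk}$ measuring the deviation of $\Theta,\Xi_{jk},\Omega_{jk}$ from the reference, and, crucially, work with $\kss:=\ks^{jk}\ks_{jk}$ rather than the individual components $\ks_{jk}$: the good damping $-\tfrac43\Theta|\Xi_{jk}|^2$ produced by the positive expansion is visible only after contracting, whereas the traceless projection of the quadratic shear self-interaction and the shear--rotation cross term $\Xi\Omega+\Omega\Xi$ are sign-indefinite component by component, so the shear equation does not close otherwise. The equations for $\ke,\kss,\kb_{jk}$ are then of the schematic form ``linear damping $+$ forcing $O((t+a)^{-3})$ $+$ small quadratic''. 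I would impose bootstrap assumptions bounding $|\ke|,\kss,|\kb_{jk}|$ by small constants, whose admissibility at $t=0$ is exactly what the three terms of $(D^\star)$ in \eqref{e:deldata.a} guarantee (the square root there being matched to the definition of $\ke$, and the weights $\Theta_0^{-7/4},\Theta_0^{-2}$ to the rescalings of $\ks_{jk},\kb_{jk}$); integrate each equation; absorb the quadratic terms by the bootstrap smallness and the forcing by the small integrals above together with the smallness of $\sigma\in(0,\sigma_\dag)$; and thereby improve the bootstrap constants. The abstract bootstrap principle (Proposition \ref{t:btpr}, Appendix \ref{a:btpr}) then propagates these bounds to all of $[0,T^\star)$, so $\|\Theta\|_{\Li},\|\Xi_{jk}\|_{\Li},\|\Omega_{jk}\|_{\Li}<\infty$ on $[0,T^\star)\times\del{}\Omega(t)$; since $\Theta_{jk}=\Xi_{jk}+\tfrac13\Theta\delta_{jk}$ and $W_{jk}=\Theta_{jk}-\Omega_{jk}$, this gives $\|W_{jk}\|_{\Li([0,T^\star)\times\del{}\Omega(t))}<\infty$, which is the claim.

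The main obstacle is the last step, closing the bootstrap for the shear. One must organize the quadratic shear and shear--rotation terms so that, after contracting to $\kss$, every contribution is either dominated by the expansion-induced damping $-\tfrac43\Theta|\Xi_{jk}|^2$ or made small by the bootstrap hypothesis and by $\sigma$ being small; and the exponents in the rescalings defining $\ke,\ks_{jk},\kb_{jk}$ must be chosen so that the damping rates line up with the decay $\sim(t+a)^{-1}$ of $\Theta_{\mathrm{app}}$ (this mismatch of rates being the reason one cannot simply bound $|W_{jk}|$ directly). Translating the strong admissibility condition \eqref{e:deldata.a} into the initial bootstrap inequalities is the other place where the precise form of the hypotheses is used.
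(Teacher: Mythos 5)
Your plan follows essentially the same route as the paper's proof: derive the Newtonian Raychaudhuri system on the boundary from the free-fall equation, compare against the homogeneously expanding reference $(\tilde{\Theta},0,0)$, pass to the rescaled perturbations $\ke,\ks_{jk},\kb_{jk}$ and close the shear estimate through $\kss$ rather than componentwise, bound the Hessian forcing by Lemma \ref{t:dphix3} combined with the lower bound $|\chi(t,\xi)|>A(t+a)$ from Proposition \ref{t:spprbd}, and run a bootstrap whose base case is exactly condition $(D^\star)$. What remains is only the explicit bookkeeping of constants in the integrated inequalities, which is precisely what the paper's proof carries out.
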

\begin{remark}
 	The data given in terms of $z_0$ and $X_0$ are equivalent to the initial velocity $w^i(0,\bx)$ satisfying $\underline{w}^i(0,\xi):= \lambda_1\lambda_0^{-1}\sigma  \xi^i+X_0^i(\xi)$ (where $X_0^i(\xi):=X^i(0,\xi)$)
 for $\xi\in\del{}\Omega(0)$	where $\xi^i X_0^j\delta_{ij}=0$.
 Then direct calculations imply $X^2_0(\xi)=\delta_{jk} X_0^j(\xi) X_0^k(\xi)$ and  $z(0,\xi)=    \lambda_1\lambda_0^{-1}\sigma  | \xi |$.
 Then the derivative of $w^k$ is $\del{j} \underline{w}_i(0,\xi)= \lambda_1\lambda_0^{-1}\sigma \delta_{ji} + \del{j} X_{0i}$ ($X_{0i}:=X_0^j\delta_{ij}$),
 and in turn, the expansion, shear and rotation are
 \begin{gather*}
 	\Theta_0 =  3\lambda_1\lambda_0^{-1}\sigma  +   \del{j} X_0^j,  \quad \Theta_{0jk}= \lambda_1\lambda_0^{-1}\sigma \delta_{jk}+\frac{1}{2} (\del{j} X_{0k}+\del{k} X_{0j}),  \\
 	\Omega_{0jk}=  \frac{1}{2} (\del{k} X_{0j}-\del{j} X_{0k}) \AND
 	\Xi_{0jk}=   \frac{1}{2}(\del{j} X_{0k}+\del{k} X_{0j})-\frac{1}{3}(\del{i} X_0^i) \delta_{jk}.
\end{gather*}
\end{remark}
\begin{proof}
	By Local Existence and Uniqueness Theorems (see \cite[Theorem $2.12$ and Theorem $2.21$]{Liu2021d}), there is a constant $T>0$, such that the diffuse boundary problem $(i)$--$(iv)$ has a unique classical solution $(\rrho,\rw^i,\rPhi, \Omega(t)) $ defined by Definition \ref{t:clsl}. Moreover,  $\rrho^{\frac{\gamma-1}{2 }}
	\in C^{2}([0,T)\times\Rbb^3)$ and $ \Phi \in  C^2([0,T)\times\mathbb{R}^3)$, and the solution $\rw^i$ satisfies
	\begin{equation*}
		\rw^i\in C^0([0,T),H^{4}(\Omega(t)))\cap C^1([0,T),H^{3}(\Omega(t))).
	\end{equation*}

Since there is no the first class of global solution with
parameter $G_1$, and by Lemma \ref{t:sadad}, the initial data
\begin{equation*}
	(\mathbf{w}_0|_{ \del{}\Omega(0)}, \Omega(0)) \in  \mathfrak{A}^\dag(E,M,G_1,G_0,H^\prime(0),\sigma) \subset  \mathfrak{A}(E,M,G_1,H^\prime(0)),
\end{equation*}
for any $\sigma\in(0,\sigma_\dag)$ and any $\xi\in\del{}\Omega(0)$, that is, the conditions of Weak Blowup Theorem \ref{t:blupthm0} are satisfied, and it follows  this solution is not the global solution to the diffuse boundary problem, and there is a constant $0<T^\star<+\infty$, such that, the classical solution breaks down at $t=T^\star$ and
\begin{align}\label{e:bl1}
	\int^{T^\star}_0 \Bigl(\|\nabla w^i(s)\|_{L^\infty( \Omega(s))}  +\|\nabla \rrho^{\frac{\gamma-1}{2}}\|_{ L^\infty(\Omega(s))} \Bigr) ds=+\infty.
\end{align}

In the following, we focus on the proof of the improved estimate of blowups \eqref{e:blup2} if $T^\star\in(0,T^\natural]$ and $\rrho\in \mathrm{R}^0([0,T^\star)\times \Omega(t),G_0)$ (improve \eqref{e:bl1}). We prove it by contradictions and assume, otherwise,
\begin{equation}\label{e:oppoassp}
	\int^{T^\star}_0 \Bigl( \|\Theta(s)\|_{L^\infty( \mathring{\Omega}_\epsilon(s) )}  +\|\Omega_{jk}(s)\|_{ L^\infty( \mathring{\Omega}_\epsilon(s) )}+\|\nabla \rrho^{\frac{\gamma-1}{2}}(s)\|_{ L^\infty(\Omega(s))} \Bigr) ds<+\infty.
\end{equation}
By Lemma \ref{t:anaRay}, we arrive at $|W_{jk}(t,\bx)|<+\infty$ for any $(t,\bx)\in [0,T^\star)\times\del{}\Omega(t)$. Since $(\rrho,\rw^i,\Omega(t))$ is a classical solution for $t\in [0, T^\star) $, by the definition of classical solution (see Definition \ref{t:clsl}),
\begin{equation*}
W_{jk}(\check{x}^\mu)=w_{k,j}(\check{x}^\mu )=\lim_{x^\mu\rightarrow\check{x}^\mu}\del{j} \rw_k(x^\mu)=\lim_{x^\mu\rightarrow\check{x}^\mu}W_{jk}(x^\mu)<\infty	
\end{equation*}
for every $\check{x}^\mu\in[0,T^\star)\times \del{}\Omega(t)$ and $x^\mu\in[0,T^\star)\times \Omega(t)$. Then there is a small constant $\epsilon_0>0$, such that for any $\epsilon\in(0,\epsilon_0)$,  $\|W_{jk}(t)\|_{L^\infty( \Omega_\epsilon(t))}<C<\infty$ for $t\in[0,T^\star)$. Furthermore, integrating it for time, we obtain 
\begin{equation}\label{e:wbd}
	\int^{T^\star}_0 \|W_{jk}(s)\|_{L^\infty( \Omega_\epsilon(s))} ds<C T^\star <+\infty.
\end{equation}
On the other hand, by the strong continuation principle (see \cite[Theorem $3.3$]{Liu2021d}), with the help of the assumption \eqref{e:oppoassp} and \eqref{e:wbd}, we conclude there is a constant $\epsilon>0$ such that a priori estimate holds
\begin{equation*}
\int^{T^\star}_0 \Bigl(\|W_{jk}(s)\|_{L^\infty( \Omega_\epsilon(s))}+\|\Theta(s)\|_{L^\infty( \mathring{\Omega}_\epsilon(s) )} +\|\Omega_{jk}(s)\|_{ L^\infty( \mathring{\Omega}_\epsilon(s) )}+\|\nabla \alpha(s)\|_{ L^\infty(\Omega(s))} \Bigr) ds<\infty.
\end{equation*}
Then there is a constant $T^\prime>T^\star>0$, such that the classical solution $(\rrho,\rw^i,\rPhi,\Omega(t))$ exists on $t\in[0,T^\prime)$. This contradicts with the fact that $[0, T^\star)$ is the maximal time interval of the existence of the solution. Therefore, this contradiction implies there is a small constant $\epsilon>0$, such that
\begin{equation*}
	\int^{T^\star}_0 \Bigl( \|\Theta(s)\|_{L^\infty( \mathring{\Omega}_\epsilon(s) )}  +\|\Omega_{jk}(s)\|_{ L^\infty( \mathring{\Omega}_\epsilon(s) )}+\|\nabla \rrho^{\frac{\gamma-1}{2}}(s)\|_{ L^\infty(\Omega(s))} \Bigr) ds=+\infty.
\end{equation*}
This completes the proof.
\end{proof}

\subsection{Analysis of Newtonian version of Raychaudhuri's equations}\label{s:raych}
The main objective of this section is to prove Lemma \ref{t:anaRay} (i.e., the analysis of Newtonian version of Raychaudhuri's equations). We first derive Newtonian version of Raychaudhuri's equations (see \S\ref{s:51}), then point out an approximation solution to the  Raychaudhuri's equations (see \S\ref{s:52}). In the end, we prove Lemma \ref{t:anaRay} by analyzing the Newtonian version of Raychaudhuri's equations (see \S\ref{s:pflm}).

\subsubsection{Newtonian version of Raychaudhuri's equations}\label{s:51}
Since $\rrho^{\gamma-1}\in C^2([0,T)\times\Rbb^3)$,  $\rw^i\in C^0([0,T),H^{4}(\Omega(t)))\cap C^1([0,T),H^{3}(\Omega(t)))$ and $w^i(t,\cdot)\in H^s(\Omega(t))\subset C^{s-2}(\overline{\Omega(t)})$ ($s\geq 3$) (e.g., see \cite[Theorem $4.12$]{Adams2003b}, Sobolev Imbedding Theorem), by multiplying $1/\rrho$ on both sides of \eqref{e:NEul2}, and differentiating it with respect to $x^j$, we derive that, for any $(t,\bx)\in[0,T)\times \Omega(t)$,
\begin{align}\label{e:dwev}
	\del{t}\del{j}w^k  +w^l\del{l}\del{j} w^k+\del{j}w^l\del{l} w^k +\del{j}\Bigl(\frac{1}{\rho}\del{}^k p\Bigr)=  -  \del{}^k\del{j} \Phi
\end{align}
where we calculate
\begin{equation*}
	\del{j}\Bigl(\frac{1}{\rho}\del{}^k p\Bigr)=\delta^{ki} \del{j} \del{i} \rrho^{\gamma-1}.
\end{equation*}
Since $\rrho^{\gamma-1}\in C^2([0,T)\times\Rbb^3)$ and $\del{j} \del{i} \rrho^{\gamma-1}\equiv 0$ for $(t, \bx)\in \overline{\Omega}^\mathsf{c}(t)$,
\begin{equation*}
	\lim_{(t,\bx)\rightarrow (\check{t},\check{\bx})} \del{j}\Bigl(\frac{1}{\rho}\del{}^k p\Bigr)=\lim_{(t,\bx)\rightarrow (\check{t},\check{\bx})}\delta^{ki} \del{j} \del{i} \rrho^{\gamma-1}=0
\end{equation*}
for any $(\check{t},\check{\bx})\in [0,T)\times\del{}\Omega(t)$
and $(t, \bx)\in [0,T)\times\Omega(t)$. Therefore, by noting, see \cite[Lemma $A.2$]{Liu2021d}, $\lim_{(t,\bx)\rightarrow (\check{t},\check{\bx})}D_t=D_t\lim_{(t,\bx)\rightarrow (\check{t},\check{\bx})}$, and using $\lim_{(t,\bx)\rightarrow (\check{t},\check{\bx})}$ acting on the both sides of \eqref{e:dwev}, we obtain
\begin{align}\label{e:dwev2}
	D_tw^k_{,j} (\check{t},\check{\bx})  + w^l_{,j}(\check{t},\check{\bx})  w^k_{,l}(\check{t},\check{\bx})  =  -  \del{}^k\del{j} \Phi(\check{t},\check{\bx})
\end{align}
for any $(\check{t},\check{\bx})\in [0,T)\times\del{}\Omega(t)$.
This implies that
\begin{align*}
\underline{D_t W^i_k}(t,\xi)=\del{t} \uW^i_k(t,\xi)=-\uW^i_j(t,\xi) \uW_k^j(t,\xi)- \underline{\delta^{il} \del{k} \del{l} \Phi }(t,\xi),
\end{align*}
that is, by using $\delta_{ij}$ to  lower the indecies
\begin{align}\label{e:dtweq1}
\underline{D_t W_{jk}}(t,\xi)=\del{t}\uW_{jk}(t,\xi)=-\delta^{li}\uW_{ji}(t,\xi)\uW_{lk}(t,\xi) - \underline{\del{k}\del{j} \Phi}(t,\xi).
\end{align}
for any $(t,\xi)\in [0,T)\times \del{}\Omega(0)$.
Using material derivative $D_t$ acting on \eqref{e:WThOm0a} and using  \eqref{e:dtweq1} to substitute $D_tW_{jk}$, with the help of \eqref{e:WThOm1}, we derive, for any $(\check{t},\check{\bx})\in [0,T)\times\del{}\Omega(t)$,
\begin{align}\label{e:deeq0}
	D_t\Theta_{jk}= & \frac{1}{2}\bigl(D_t W_{jk} + D_t W_{kj} \bigr) 
	=   -\frac{1}{2}(\delta^{li}W_{ji}W_{lk}+\delta^{li}W_{kl}W_{ij})-\del{k}\del{j} \Phi \notag  \\
	= & -\frac{1}{2}[\delta^{li}(\Theta_{ji}-\Omega_{ji})(\Theta_{lk}-\Omega_{lk})+\delta^{li}(\Theta_{kl}-\Omega_{kl})(\Theta_{ij}-\Omega_{ij})]- \del{k}\del{j} \Phi\notag  \\
	= & - \delta^{li}( \Theta_{ji}\Theta_{lk}+ \Omega_{ji}\Omega_{lk} ) - \del{k}\del{j} \Phi.
\end{align}
Similarly, by \eqref{e:WThOm0b}, \eqref{e:WThOm1} and \eqref{e:dtweq1}, we calculate $D_t\Omega_{jk} $, for any $(\check{t},\check{\bx})\in [0,T)\times\del{}\Omega(t)$,
\begin{align}\label{e:roeq0}
	&D_t\Omega_{jk} =  \frac{1}{2}\bigl(D_t W_{kj} -D_t W_{jk} \bigr)  
	=   -\frac{1}{2}\delta^{li}(W_{ki}W_{lj}-W_{jl}W_{ik}) \notag  \\
	&\hspace{0.5cm}=  -\frac{1}{2}\delta^{li}[(\Theta_{ki}-\Omega_{ki})(\Theta_{lj}-\Omega_{lj})-(\Theta_{jl}-\Omega_{jl})(\Theta_{ik}-\Omega_{ik})] 
	=   - \delta^{li} ( \Theta_{jl}\Omega_{ik}+\Omega_{jl}\Theta_{ik}).
\end{align}

Further, we decompose $\Theta_{jk}$ to be the trace part $\Theta$ (called the \textit{expansion}, also see \eqref{e:theta}) and the traceless component $\Xi_{jk}$ (called the \textit{shear}),
\begin{align}\label{e:defxi}
	\Theta:=\Theta_{jk}\delta^{jk} \AND \Xi_{jk}:=\Theta_{jk}-\frac{1}{3}\Theta \delta_{jk}.
\end{align}
Then
\begin{align}\label{e:defth}
	\Theta_{jk}=\Xi_{jk}+\frac{1}{3}\Theta \delta_{jk}.
\end{align}
Then, by \eqref{e:WThOm1} and \eqref{e:defth}, $W_{kj}$ can be decomposed into
\begin{equation}\label{e:Whelmd}
	W_{kj}=\Xi_{jk}+\frac{1}{3}\Theta \delta_{jk}+\Omega_{jk} \AND W_{jk}=\Xi_{jk}+\frac{1}{3}\Theta \delta_{jk}-\Omega_{jk}.
\end{equation}

We first note that $\rrho(\check{t},\check{\bx})\equiv 0$ for any $(\check{t},\check{\bx})\in [0,T)\times\del{}\Omega(t)$, and  multiplying \eqref{e:deeq0} by $\delta^{jk}$ and noting that $\delta^{lk}\Xi_{lk}=0$ (since $\Xi_{lk}$ is traceless symmetric matrix) yield the equation of the expansion $\Theta$, with the help of \eqref{e:defth} and \eqref{e:NEul3},
\begin{align}\label{e:rayd1}
	D_t\Theta = & - \delta^{jk}\delta^{li}( \Theta_{ji}\Theta_{lk}+ \Omega_{ji}\Omega_{lk} ) - \delta^{jk}\del{k}\del{j} \Phi \notag  \\
	= & -\delta^{jk}\delta^{li}\Bigl[ \Bigl(\Xi_{ji}+\frac{1}{3}\Theta \delta_{ji}\Bigr)\Bigl(\Xi_{lk}+\frac{1}{3}\Theta \delta_{lk}\Bigr)+ \Omega_{ji}\Omega_{lk} \Bigr] - \Delta \Phi \notag  \\
	= & -\Bigl[ \delta^{jk}\delta^{li}\Xi_{ji}\Xi_{lk}+\frac{1}{3}\Theta  \delta^{lk}\Xi_{lk}+\frac{1}{3}\Theta  \delta^{ji} \Xi_{ji} +\frac{1}{9}\Theta^2 \delta_{lk}   \delta_{ji}\delta^{jk}\delta^{li}  + \delta^{jk}\delta^{li}\Omega_{ji}\Omega_{lk} \Bigr] -  \rrho  \notag  \\
	= & -\Bigl[ \delta^{jk}\delta^{li}\Xi_{ji}\Xi_{lk}  +\frac{1}{3}\Theta^2   + \delta^{jk}\delta^{li}\Omega_{ji}\Omega_{lk} \Bigr] -  \rrho  
	=   -\frac{1}{3}\Theta^2 -  \delta^{jk}\delta^{li}\Xi_{ji}\Xi_{lk}     - \delta^{jk}\delta^{li}\Omega_{ji}\Omega_{lk}
\end{align}
for any $(\check{t},\check{\bx})\in [0,T)\times\del{}\Omega(t)$.
The following equation is obtained by differentiating \eqref{e:defxi} and substituting $\Theta_{jk}$ and $\Theta$ by \eqref{e:deeq0} and \eqref{e:rayd1}, respectively,
\begin{align}\label{e:rayd2}
	D_t\Xi_{jk} =   D_t\Theta_{jk}-\frac{1}{3}\delta_{jk} D_t \Theta 
	= & - \delta^{li}( \Theta_{ji}\Theta_{lk}+ \Omega_{ji}\Omega_{lk} ) - \del{k}\del{j} \Phi-\frac{1}{3}\delta_{jk}\Bigl(-\frac{1}{3}\Theta^2 \notag  \\
	& -  \delta^{mn}\delta^{li}\Xi_{mi}\Xi_{ln}     - \delta^{mn}\delta^{li}\Omega_{mi}\Omega_{ln} \Bigr) \notag  \\
	= & - \delta^{li}\Bigl[ \Bigl(\Xi_{ji}+\frac{1}{3}\Theta \delta_{ji}\Bigr)\Bigl(\Xi_{lk}+\frac{1}{3}\Theta \delta_{lk}\Bigl)+ \Omega_{ji}\Omega_{lk} \Bigr] \notag  \\
	& - \del{k}\del{j} \Phi-\frac{1}{3}\delta_{jk}\Bigl(-\frac{1}{3}\Theta^2 -  \delta^{rs}\delta^{li}\Xi_{ri}\Xi_{ls}     - \delta^{rs}\delta^{li}\Omega_{ri}\Omega_{ls} \Bigr)\notag  \\ 	
	&\hspace{-5cm} =  -\frac{2}{3}\Theta \Xi_{jk}  -   \delta^{li}\Xi_{ji}\Xi_{lk} - \delta^{li}\Omega_{ji}\Omega_{lk} +\frac{1}{3}\delta_{jk}  \delta^{rs}\delta^{li}\Xi_{ri}\Xi_{ls}  +\frac{1}{3}\delta_{jk} \delta^{rs}\delta^{li}\Omega_{ri}\Omega_{ls}   - \del{k}\del{j} \Phi
\end{align}
for any $(\check{t},\check{\bx})\in [0,T)\times\del{}\Omega(t)$.
Rewriting \eqref{e:roeq0} in terms of $\Theta$, $\Omega_{jk}$ and $\Xi_{jl}$, we obtain
\begin{align}\label{e:roeq1}
	D_t\Omega_{jk}
	= & - \delta^{li}( \Theta_{jl}\Omega_{ik}+\Omega_{jl}\Theta_{ik}) 
	=   - \delta^{li}[ (\Xi_{jl}+\frac{1}{3}\Theta \delta_{jl})\Omega_{ik}+\Omega_{jl}(\Xi_{ik}+\frac{1}{3}\Theta \delta_{ik})] \notag  \\
	= & -\frac{2}{3}\Theta \Omega_{jk} -    \delta^{li}\Xi_{jl}\Omega_{ik} - \delta^{li}\Omega_{jl}\Xi_{ik}.
\end{align}
for any $(\check{t},\check{\bx})\in [0,T)\times\del{}\Omega(t)$.
Then, gathering \eqref{e:rayd1}, \eqref{e:rayd2} and \eqref{e:roeq1}, adopting the notation given in \S\ref{s:lag}, it follows a \textit{Newtonian version of Raychaudhuri equations} (see, for example,  \cite{Wald2010} for Raychaudhuri equations in \textit{general relativity}),
\begin{align}
	\del{t}\uTheta= & -\frac{1}{3}\uTheta^2 -  \delta^{jk}\delta^{li}\uXi_{ji}\uXi_{lk}     - \delta^{jk}\delta^{li}\uOmega_{ji}\uOmega_{lk}, \label{e:ray1} \\
	\del{t}\uXi_{jk} = & -\frac{2}{3}\uTheta \uXi_{jk}  -   \delta^{li}\uXi_{ji}\uXi_{lk} - \delta^{li}\uOmega_{ji}\uOmega_{lk} +\frac{1}{3}\delta_{jk}  \delta^{rs}\delta^{li}\uXi_{ri}\uXi_{ls} +\frac{1}{3}\delta_{jk} \delta^{rs}\delta^{li}\uOmega_{ri}\uOmega_{ls}   - \underline{\del{k}\del{j} \Phi}, \label{e:ray2} \\
	\del{t}\uOmega_{jk} = & -\frac{2}{3}\uTheta \uOmega_{jk} -    \delta^{li}\uXi_{jl}\uOmega_{ik} - \delta^{li}\uOmega_{jl}\uXi_{ik}.  \label{e:ray3}
\end{align}

\subsubsection{Approximation solutions}\label{s:52}
Direct calculations imply that
\begin{equation}
(\tilde{\Theta}, \tilde{\Xi}_{jk}, \tilde{\Omega}_{jk}):=\biggl( \Bigl(\frac{\lambda_0}{3\lambda_1}\sigma^{-1}+\frac{1}{3} t \Bigr)^{-1}, 0, 0\biggr) \label{e:iWrs1a}
\end{equation}
solves the following ordinary differential system which is a similar system to \eqref{e:ray1}--\eqref{e:ray3} but without gravity (i.e., there is no tidal force in the following equation of $\tilde{\Xi}_{jk}$ comparing with \eqref{e:ray2}),
\begin{align*}
\del{t}\tilde{\Theta}= & -\frac{1}{3}\tilde{\Theta}^2 -  \delta^{jk}\delta^{li}\tilde{\Xi}_{ji} \tilde{\Xi}_{lk}     - \delta^{jk}\delta^{li}\tilde{\Omega}_{ji}\tilde{\Omega}_{lk}, 
\\
\del{t}\tilde{\Xi}_{jk} = & -\frac{2}{3}\tilde{\Theta} \tilde{\Xi}_{jk}  -   \delta^{li}\tilde{\Xi}_{ji}\tilde{\Xi}_{lk} - \delta^{li}\tilde{\Omega}_{ji}\tilde{\Omega}_{lk} +\frac{1}{3}\delta_{jk}  \delta^{rs}\delta^{li}\tilde{\Xi}_{ri}\tilde{\Xi}_{ls}  +\frac{1}{3}\delta_{jk} \delta^{rs}\delta^{li}\tilde{\Omega}_{ri}\tilde{\Omega}_{ls} , 
 \\
\del{t}\tilde{\Omega}_{jk} = & -\frac{2}{3}\tilde{\Theta} \tilde{\Omega}_{jk} -    \delta^{li}\tilde{\Xi}_{jl}\tilde{\Omega}_{ik} - \delta^{li}\tilde{\Omega}_{jl}\tilde{\Xi}_{ik}.  
\end{align*}

In the following proof of Lemma \ref{t:anaRay}, we will find solutions of Rauchaudhuri's equations in a small  neighborhood of this solution \eqref{e:iWrs1a}.

\subsubsection{Proof of Lemma \ref{t:anaRay}}\label{s:pflm}
The idea of this proof is first to  introduce a set of new variables which characterize the deviations of the solutions of the Raychaudhuri's equation from above approximation solution $(\tilde{\Theta}, \tilde{\Xi}_{jk}, \tilde{\Omega}_{jk})$. Then using bootstrap argument concludes these variables remain small if the initial data is suitably small. Then by transforming the original variables back, they are bounded.

\underline{Step $1$. Variable transformations:} Let us first introduce new variables,
\begin{align}
\mathfrak{e}(t,\xi):=&\uTheta^{-1}(t,\xi)-\frac{1}{3} t -\frac{\lambda_0}{3\lambda_1}\sigma^{-1} , \label{e:Wrs1}\\
\ks_{jk}(t,\xi):=& \uTheta^{-\frac{7}{4}}(t,\xi)\uXi_{jk}(t,\xi),  \label{e:Wrs2b} \\
\kb_{jk}(t,\xi):=&\uTheta^{-2}(t,\xi)\uOmega_{jk}(t,\xi)
, \label{e:Wrs3}
\end{align}
and denote
\begin{align}\label{e:Wrs4}
\kss:=\ks^{jk}\ks_{jk}= \ks_{11}^2+ \ks_{22}^2+ \ks_{33}^2+2 \ks_{12}^2+2 \ks_{13}^2+2 \ks_{23}^2.
\end{align}
Then this directly yields
\begin{align}\label{e:Wrs5}
\ks^2_{jk} \leq \kss \AND |\ks_{jk}| \leq \kss^{\frac{1}{2}}
\end{align}
for every $k,j$.
For later convenience, we express $\Theta$, $\Xi_{jk}$ and $\Omega_{jk}$ by \eqref{e:Wrs1}--\eqref{e:Wrs3},
\begin{align}
\uTheta(t,\xi)=& \Bigl(\ke(t,\xi)+\frac{\lambda_0}{3\lambda_1}\sigma^{-1} +\frac{1}{3} t \Bigr)^{-1} , \label{e:iWrs1}\\
\uXi_{jk}(t,\xi)=&   \uTheta^{\frac{7}{4}}(t,\xi) \ks_{jk} (t,\xi) ,\label{e:iWrs2b}  \\
\uOmega_{jk}(t,\xi)=&
\uTheta^{2}(t,\xi) \kb_{jk}(t,\xi). \label{e:iWrs3}
\end{align}
The identities $\delta^{lk}\Xi_{lk} \equiv 0$ (since $\Xi_{lk}$ is traceless symmetric matrix) and $\Omega_{ii} \equiv 0$ (since $\Omega_{jk}$ is antisymmetric, i.e., $\Omega_{jk}=-\Omega_{kj}$) lead to
\begin{align}\label{e:s0}
\delta^{jk} \ks_{jk}=0 \AND \delta^{jk} \kb_{jk}= \uTheta^{-2}\delta^{jk}\uOmega_{jk}=0.
\end{align}

By the assumptions of Theorem \ref{t:blupthm}, $(\mathbf{w}_0|_{ \del{}\Omega(0)}, \Omega(0)) \in \mathfrak{A}^\dag(E,M,G_1,G_0,H^\prime(0),\sigma)$. In terms of these new variables, this implies the \underline{initial data} satisfy (note the following data \eqref{e:dtdw} are equivalent to \eqref{e:deldata.a} of the strong admissible initial boundary data )
\begin{align}\label{e:dtdw}
|\ke_0|\leq \frac{\lambda_0}{12\lambda_1}\sigma^{-1} , \quad  \kss_{0} \leq \frac{1}{16} \sigma^{-1} \AND |\kb_{0jk}| \leq \frac{1}{4} \sigma^{-\frac{1}{2}},
\end{align}
then we \textit{claim} it follows estimates
\begin{align}\label{e:dtdw2}
|\ke|\leq \frac{\lambda_0}{6\lambda_1}\sigma^{-1}, \quad  \kss  \leq  \sigma^{-1} \AND |\kb_{jk}| \leq  \sigma^{-\frac{1}{2 }}.
\end{align}
for any $(t,\xi)\in[0,T^\star)\times \del{}\Omega(0)$.  \eqref{e:dtdw2} further implies the conclusion of this Lemma \ref{t:anaRay}, i.e.,  $|W_{jk}(t,\bx)|<\infty$ for any $(t,\bx)\in [0,T^\star)\times\del{}\Omega(t)$.
Since by \eqref{e:Whelmd}, \eqref{e:iWrs1}--\eqref{e:iWrs3}, we obtain
\begin{equation}\label{e:Wkj}
\underline{W}_{kj}= \uTheta^{\frac{7}{4}} \ks_{jk}+\frac{1}{3}\uTheta \delta_{jk}+\uTheta^{2} \kb_{jk}
\end{equation}
and we are able to estimate $|\uTheta|$, with the help of \eqref{e:dtdw2}, by
\begin{equation}\label{e:thest0}
|\uTheta|\leq \Bigl|\Bigl(\ke+\frac{\lambda_0}{3\lambda_1}\sigma^{-1} \Bigr)^{-1} \Bigr|\leq  \Bigl( \frac{\lambda_0}{3\lambda_1}\sigma^{-1} - \frac{\lambda_0}{6\lambda_1}\sigma^{-1} \Bigr)^{-1} = \frac{6\lambda_1}{\lambda_0}
\sigma .
\end{equation}
Then by \eqref{e:dtdw2}--\eqref{e:thest0}, we derive an estimate
\begin{equation*}
|\underline{W}_{kj}|\leq \Bigl(\frac{6\lambda_1}{\lambda_0}\Bigr)^{\frac{7}{4 }}   \sigma^{\frac{5}{4 }} + \frac{2\lambda_1}{\lambda_0} \sigma  + \Bigl(\frac{6\lambda_1}{\lambda_0}\Bigr)^2 \sigma^{\frac{3}{2 } } <+\infty
\end{equation*}
for every $(t,\xi)\in[0,T^\star)\times \del{}\Omega(0)$, i.e.,  $\|W_{jk} \|_{\Li([0,T^\star)\times\del{}\Omega(t))}<\infty$. Therefore, in the rest of the proof, we only need to verify above claim,  i.e., \eqref{e:dtdw2} for $(t,\xi)\in[0,T^\star)\times \del{}\Omega(0)$.

\underline{Step $2$. The bootstrap argument:}  Now let us focus on the claim, i.e., \eqref{e:dtdw2} for $(t,\xi)\in[0,T^\star)\times \del{}\Omega(0)$, and we use bootstrap argument to prove it. In order to do this, we give the \underline{bootstrap assumptions} as \eqref{e:dtdw2}:
\begin{align*}
|\ke|\leq \frac{\lambda_0}{6\lambda_1}\sigma^{-1}, \quad  \kss  \leq  \sigma^{-1} \AND |\kb_{jk}| \leq  \sigma^{-\frac{1}{2 }}
\end{align*}
for $(t,\xi)\in[0,T)\times \del{}\Omega(0)$ where $T$ is any constant such that $T\in(0, T^\star]$.

Let us first calculate a simple integration and estimate it for later use, by using \eqref{e:dtdw2} and assuming $ b >0$ is any constant,
\begin{align}\label{e:speint}
&\int_0^t \uTheta^{1+ b } (s,\xi)  ds = \int_0^t\Bigl(\ke+\frac{\lambda_0}{3\lambda_1}\sigma^{-1}+\frac{1}{3} s \Bigr)^{-1- b }   ds
 \notag \\
& \hspace{2cm} \leq \int^\infty_0\Bigl(\frac{\lambda_0}{6\lambda_1}\sigma^{-1} +\frac{1}{3}  s \Bigr)^{-1- b }ds
=  \frac{3}{  b  }\Bigl(\frac{6\lambda_1}{\lambda_0}\Bigr)^{ b } \sigma^{b}.
\end{align}

Now let us differentiate \eqref{e:Wrs1}, with the help of  \eqref{e:ray1} and  \eqref{e:iWrs2b}--\eqref{e:iWrs3}, we arrive at
\begin{align}\label{e:Wrseq1}
\del{t}\ke=&-\uTheta^{-2}\del{t}\uTheta-\frac{1}{3}  
= -\uTheta^{-2}\Bigl(-\frac{1}{3}\uTheta^2 -  \delta^{jk}\delta^{li}\uXi_{ji}\uXi_{lk}     - \delta^{jk}\delta^{li}\uOmega_{ji}\uOmega_{lk}\Bigr)-\frac{1}{3} \notag  \\
=& \uTheta^{ \frac{3}{2}} \delta^{jk}\delta^{li} \ks_{lk}  \ks_{ji}  +  \delta^{jk}\delta^{li} \uTheta^{ 2 }  \kb_{ji}\kb_{lk}  
=  \uTheta^{\frac{3}{2}} \kss  +\delta^{jk}\delta^{li} \uTheta^{ 2 }  \kb_{ji}\kb_{lk}
\end{align}
for $(t,\xi)\in[0,T)\times \del{}\Omega(0)$.
Then integrating \eqref{e:Wrseq1} implies
\begin{align}\label{e:eeq}
\ke = & \ke_0+\int_0^t \uTheta^{\frac{3}{2}} \kss ds    + \int_0^t \uTheta^{ 2}  \delta^{jk}\delta^{li}  \kb_{ji} \kb_{lk} ds.
\end{align}
for $(t,\xi)\in[0,T)\times \del{}\Omega(0)$.

Then estimating $|\ke|$ by \eqref{e:eeq}, with the help of \eqref{e:speint}, $0<\sigma<\sigma_\dag$ (recall $\sigma_\dag$ is defined by \eqref{e:sigma0}) and noting (since $0<\sigma<1/5$) \begin{equation}\label{e:Rest1}
	\frac{6\lambda_1}{\lambda_0} \in\Bigl(4,\frac{87}{14}\Bigr),
\end{equation}
yields
\begin{align*}
|\ke| \leq & |\ke_0|+\int_0^t \uTheta^{\frac{3}{2}} |\kss|  ds    + \int_0^t \uTheta^{ 2}  |\delta^{jk}\delta^{li}  \kb_{ji} \kb_{lk}| ds  
\leq   \frac{\lambda_0}{12\lambda_1}\sigma^{-1} +  \sigma^{-1} \int_0^t\uTheta^{\frac{3}{2}}  ds   + 9  \sigma^{-1}  \int_0^t\uTheta^{ 2}   ds   \notag  \\
\leq & \frac{\lambda_0}{12\lambda_1}\sigma^{-1} +6   \Bigl(\frac{6\lambda_1}{\lambda_0}\Bigr)^{\frac{1}{2}} \sigma^{\frac{1}{2 } } \sigma^{-1}  + 27  \Bigl(\frac{6\lambda_1}{\lambda_0}\Bigr)\sigma \sigma^{-1}
< \frac{\lambda_0}{8\lambda_1}\sigma^{-1}
\end{align*}
for $(t,\xi)\in[0,T)\times \del{}\Omega(0)$.

In order to estimate $\kss $, we first differentiate \eqref{e:Wrs2b} by using \eqref{e:ray1}--\eqref{e:ray2}, \eqref{e:iWrs2b}--\eqref{e:iWrs3},  and it yields
\begin{align}
\del{t}\ks_{jk}
= &  -\frac{7}{4}\uTheta^{-2-\frac{3}{4}} \uXi_{jk} \del{t}\uTheta  +\uTheta^{-1-\frac{3}{4}}\del{t} \uXi_{jk}   \notag \\
= &  -\frac{7}{4}\uTheta^{-2-\frac{3}{4}} \uXi_{jk} \Bigl(-\frac{1}{3}\uTheta^2 -  \delta^{jk}\delta^{li}\uXi_{ji}\uXi_{lk}     - \delta^{jk}\delta^{li}\uOmega_{ji}\uOmega_{lk}\Bigr)  +\uTheta^{-1-\frac{3}{4}}\Bigl(-\frac{2}{3}\uTheta \uXi_{jk}   \notag  \\
& - \delta^{li}\uXi_{ji}\uXi_{lk} - \delta^{li}\uOmega_{ji}\uOmega_{lk} +\frac{1}{3}\delta_{jk}  \delta^{rs}\delta^{li}\uXi_{ri}\uXi_{ls} +\frac{1}{3}\delta_{jk} \delta^{rs}\delta^{li}\uOmega_{ri}\uOmega_{ls}   - \underline{\del{k}\del{j} \Phi }\Bigr) \notag  \\
=
& -\frac{1}{12}  \uTheta \ks_{jk}  + \biggl[  \frac{1}{3}  \uTheta^{1+ \frac{3}{4} } \ks_{ri}  \ks_{ls} \delta^{rs}\delta^{li}\delta_{jk}  -    \uTheta^{1+ \frac{3}{4} } \ks_{ji}  \ks_{lk}  \delta^{li}     + \frac{7}{4}  \delta^{rs}\delta^{li}\uTheta^{1+\frac{3}{2} } \ks_{jk}  \ks_{ri}  \ks_{ls}  \notag  \\
&     + \frac{1}{3}\delta^{rs}\delta^{li}\delta_{jk}   \uTheta^{3 -\frac{3}{4} } \kb_{ri}  \kb_{ls}  - \uTheta^{-1-\frac{3}{4}} \underline{\del{k}\del{j} \Phi } + \frac{7}{4}  \delta^{rs}\delta^{li}    \ks_{jk}\uTheta^{ 3 } \kb_{ri}  \kb_{ls} - \delta^{li} \uTheta^{3 -\frac{3}{4}} \kb_{ji}  \kb_{lk}\biggr] \label{e:dseq1}
\end{align}	
for $(t,\xi)\in[0,T)\times \del{}\Omega(0)$.
Then multiplying $2\ks^{jk}$ on both sides of \eqref{e:dseq1} (recall that we use $\delta^{jl}$ and $\delta_{jl}$ to raise and lower the index, for example $\ks^{jk}:=\delta^{jm}\delta^{kn}\ks_{mn}$),
\begin{align*}
\del{t}(\ks^{jk}\ks_{jk})=2\ks^{jk} \del{t}
\ks_{jk}	
=
& -\frac{1}{6}  \uTheta \ks_{jk} \ks^{jk}  + \biggl[  \frac{2}{3}   \uTheta^{1+ \frac{3}{4} }\ks^{jk} \ks_{ri}  \ks_{ls} \delta^{rs}\delta^{li}\delta_{jk}  -    2\ks^{jk}\uTheta^{1+ \frac{3}{4} } \ks_{ji}  \ks_{lk}  \delta^{li}    \notag  \\
&   + \frac{7}{2} \ks^{jk}  \delta^{rs}\delta^{li}\uTheta^{1+ \frac{3}{2} } \ks_{jk}  \ks_{ri}  \ks_{ls}    + \frac{2}{3} \ks^{jk}\delta^{rs}\delta^{li}\delta_{jk}   \uTheta^{3-\frac{3}{4} } \kb_{ri}  \kb_{ls}   \notag \\
& + \frac{7}{2}  \delta^{rs}\delta^{li}    \ks^{jk}\ks_{jk}\uTheta^{ 3 } \kb_{ri}  \kb_{ls} - 2 \delta^{li} \uTheta^{3 -\frac{3}{4}} \kb_{ji}  \kb_{lk} \ks^{jk} - 2 \ks^{jk}\uTheta^{-1-\frac{3}{4}}\underline{\del{k}\del{j} \Phi} \biggr]
\end{align*}
for $(t,\xi)\in[0,T)\times \del{}\Omega(0)$.
Then noting
\begin{equation}\label{e:negtm}
	-\int^t_0\frac{1}{6}  \uTheta(s,\xi) \kss(s,\xi) ds<0,
\end{equation}
with the help of \eqref{e:s0} and \eqref{e:Wrs4}, we arrive at
\begin{align}\label{e:dtS1}
&\del{t} \Bigl[\kss +\int^t_0\frac{1}{6}  \uTheta(s,\xi) \kss(s,\xi) ds \Bigr]
=
 \frac{7}{2}   \uTheta^{1+ \frac{3}{2} }   \kss^2    -    2\uTheta^{1+ \frac{3}{4} } \delta^{li} \ks_{ji}  \ks_{lk} \ks^{jk} \notag \\
&\hspace{0.5cm} + \frac{7}{2}\delta^{rs}\delta^{li}    \uTheta^{ 3} \kss \kb_{ri}  \kb_{ls}  - 2 \delta^{li} \uTheta^{3 -\frac{3}{4}} \kb_{ji}  \kb_{lk} \ks^{jk} - 2 \uTheta^{-1-\frac{3}{4}}\ks^{jk}\underline{\del{k}\del{j} \Phi}
\end{align}	
for $(t,\xi)\in[0,T)\times \del{}\Omega(0)$.

The next step is similarly integrating above \eqref{e:dtS1} to improve the estimate of $\kss$. However, before proceeding this, we calculate some quantities for later integration.

$(a)$ By Definition \ref{t:paramtr}.\eqref{t:Def1}, let us calculate
\begin{align}\label{e:xest0}
&\biggl(\frac{ 1}{\underline{\Theta}(t,\xi)(t+a) } \biggr)^{  3 } <  \biggl(\frac{ \frac{\lambda_0}{2\lambda_1}\sigma^{-1}+\frac{1}{3} t }{ t+\sigma^{-1}} \biggr)^{  3  }=\biggl(\frac{ \bigl(\frac{\lambda_0}{2\lambda_1}-\frac{1}{3}\bigr)\sigma^{-1} }{ t+\sigma^{-1}}+\frac{1}{3} \biggr)^{  3  } \leq  \biggl(\frac{\lambda_0}{2\lambda_1}\biggr)^{3 }
\end{align}

$(b)$ Due to $(\mathbf{w}_0|_{ \del{}\Omega(0)}, \Omega(0)) \in \mathfrak{A}^\dag(E,M,G_1,G_0,H^\prime(0),\sigma) \subset  \mathfrak{A}(E,M,G_1,H^\prime(0))$ for any $\sigma\in(0,\sigma_\dag)$ and by the proof of Theorem \ref{t:blupthm0}, we can apply Proposition \ref{t:spprbd} (recall Remark \ref{t:rmk1}) to conclude (see \eqref{e:xest}),
\begin{align*}
	|\chi(t,\xi) | > A (t+a)
\end{align*}
for $(t, \xi )\in[0,T)\times \del{}\Omega(0)$ where $T\leq T^\star$. Since $z_0(\xi) =  \lambda_1\lambda_0^{-1}\sigma  |\xi|$ (see \eqref{e:xidn0.a}) and $A:=\frac{1}{\lambda_1}z_0(\xi)$ (see \eqref{e:Az}), we arrive at
\begin{align*}
	|\chi(t,\xi)|>\lambda_0^{-1}\sigma  |\xi| (t+\sigma^{-1})
\end{align*}
for $(t, \xi )\in[0,T)\times \del{}\Omega(0)$.

$(c)$ Inserting above estimate to the following integration and noting \eqref{e:BinO.a}, \eqref{e:speint}, \eqref{e:xest0} yield
\begin{align}\label{e:intthex1}
	& \int^t_0  \frac{\uTheta^{ -\frac{7}{4}} }{|\chi(t,\xi)|^{3 }} ds=\int^t_0   \frac{\uTheta^{2 -\frac{3}{4}} }{(\uTheta |\chi(t,\xi)|)^{3 }} ds <\biggl(\frac{\lambda_0}{2\lambda_1}\biggr)^{3 } \frac{\lambda_0^3\sigma^{-3}}{|\xi|^3}\int^t_0 \uTheta^{2 -\frac{3}{4}}   ds \notag  \\
	& \hspace{1cm} < 12 \biggl(\frac{\lambda_0}{2\lambda_1}\biggr)^{3 } \frac{\lambda_0^3 }{|\xi|^3}   \Bigl(\frac{6\lambda_1}{\lambda_0}\Bigr)^{ \frac{1}{4}} \sigma^{-\frac{11}{4} }
 \leq      \frac{\lambda_0^3 }{6 G_1 } \biggl(\frac{6\lambda_1}{\lambda_0}\biggr)^{\frac{1}{4}}  \sigma^{ \frac{1}{4 } }  <  \frac{9 }{2 G_1 } \biggl(\frac{6\lambda_1}{\lambda_0}\biggr)^{\frac{1}{4}}  \sigma^{ \frac{1}{4 } }.
\end{align}

Now we are in the position to integrate \eqref{e:dtS1}.  With the help of $G_0=2G_1$,  \eqref{e:intthex1}, \eqref{e:negtm} and \eqref{e:Rest1}, it leads to
\begin{align*}
  \kss \leq & \kss +\int^t_0\frac{1}{6}  \uTheta(s,\xi) \kss(s,\xi) ds  \notag \\
\leq  &  \frac{1}{16} \sigma^{-1}  +
\frac{7}{2}  \sigma^{-2} \int^t_0 \uTheta^{ \frac{5}{2} }      ds  + 18   \sigma^{-\frac{3}{2 }} \int^t_0\uTheta^{ \frac{7}{4} }    ds  + \frac{63}{2}    \sigma^{-2}  \int^t_0\uTheta^{ 3 } ds\notag \\
&  + 18  \sigma^{-\frac{3}{2 }}  \int^t_0\uTheta^{\frac{9}{4}}   ds + 18 G_0 \sigma^{-\frac{1}{2 }} \int^t_0  \frac{\uTheta^{-\frac{7}{4}} }{|\chi(t,\xi)|^{3 }} ds  \notag \\
\leq  &  \frac{1}{16} \sigma^{-1}  +
7   \Bigl(\frac{6\lambda_1}{\lambda_0}\Bigr)^{ \frac{3}{2}} \sigma^{\frac{1}{2 } } \sigma^{-1}  + 72   \Bigl(\frac{6\lambda_1}{\lambda_0}\Bigr)^{\frac{3}{4}} \sigma^{\frac{1}{4 } }\sigma^{-1}+ \frac{189}{4}          \Bigl(\frac{6\lambda_1}{\lambda_0}\Bigr)^{2} \sigma  \sigma^{-1}   \notag \\
&  +   \frac{216}{5 }   \Bigl(\frac{6\lambda_1}{\lambda_0}\Bigr)^{\frac{5}{4}} \sigma^{\frac{3}{4 } } \sigma^{-1} +  \frac{81 G_0 }{ G_1 } \biggl(\frac{6\lambda_1}{\lambda_0}\biggr)^{\frac{1}{4}}  \sigma^{ \frac{3}{4}} \sigma^{ -1} <\frac{3}{16}\sigma^{-1}
\end{align*}
for $(t,\xi)\in[0,T)\times \del{}\Omega(0)$.

Then we turn to the improvement of the last quantity $|\kb_{jk}|$. Differentiating \eqref{e:Wrs3} and using \eqref{e:ray1}, \eqref{e:ray3}, \eqref{e:iWrs2b}--\eqref{e:iWrs3}, we derive that
\begin{align}\label{e:dtb1}
\del{t}\kb_{jk}
= & -2\uTheta^{-3} \uOmega_{jk}\del{t} \uTheta+\uTheta^{-2}\del{t}\uOmega_{jk}  \notag \\
= &  \frac{2}{3} \uTheta^{ - 1} \uOmega_{jk}  +  2 \uTheta^{-3} \uOmega_{jk}\delta^{rs}\delta^{li}\uXi_{ri}\uXi_{ls}    + 2 \uTheta^{-3} \uOmega_{jk} \delta^{rs}\delta^{li}\uOmega_{ri}\uOmega_{ls}  \notag  \\
& -\frac{2}{3}\uTheta^{ - 1} \uOmega_{jk} -    \delta^{li}\uTheta^{-2}\uXi_{jl}\uOmega_{ik} - \delta^{li}\uTheta^{-2}\uOmega_{jl}\uXi_{ik} \notag  \\
= & 2 \uTheta^{3} \delta^{rs}\delta^{li}  \kb_{jk}\kb_{ri} \kb_{ls}  + 2 \uTheta^{\frac{5}{2}}    \kb_{jk}\kss- \delta^{li}\uTheta^{ \frac{7}{4}} \ks_{jl} \kb_{ik}   - \delta^{li}  \uTheta^{ \frac{7}{4}} \ks_{ik} \kb_{jl}
\end{align}
for $(t,\xi)\in[0,T)\times \del{}\Omega(0)$.
Then integrating \eqref{e:dtb1} leads to
\begin{align}\label{e:dtb2}
\kb_{jk}=&\kb_{0jk}+ 2 \int^t_0  \uTheta^{3} \delta^{rs}\delta^{li}  \kb_{jk}\kb_{ri} \kb_{ls} ds + 2 \int^t_0 \uTheta^{ \frac{5}{2}}    \kb_{jk}\kss ds  - \int^t_0\delta^{li}\uTheta^{  \frac{7}{4}} \ks_{jl} \kb_{ik} ds  - \int^t_0\delta^{li}  \uTheta^{  \frac{7}{4}} \ks_{ik} \kb_{jl} ds.
\end{align}
Then estimating $|\kb_{jk}|$ by \eqref{e:dtb2}, with the help of \eqref{e:speint}, \eqref{e:Wrs5}, \eqref{e:dtdw2}, yields
\begin{align*}
|\kb_{jk}|\leq & |\kb_{0jk}| + 2 \int^t_0  \uTheta^{3}| \delta^{rs}\delta^{li}  \kb_{jk}\kb_{ri} \kb_{ls}| ds + 2 \int^t_0 \uTheta^{1+ \frac{3}{2}}  |  \kb_{jk}\kss| ds \notag  \\
& + \int^t_0 \uTheta^{1+\frac{3}{4}} |\delta^{li}\ks_{jl} \kb_{ik}| ds + \int^t_0 \uTheta^{1+\frac{3}{4}} |\delta^{li} \ks_{ik} \kb_{jl}| ds  \notag  \\
\leq & \frac{1}{4} \sigma^{-\frac{1}{2 }} + 18  \sigma^{-\frac{3 }{2 }} \int^t_0  \uTheta^{3}  ds + 2   \sigma^{-\frac{3 }{2 }}   \int^t_0 \uTheta^{1+ \frac{3}{2}}   ds + 6 \sigma^{-1} \int^t_0 \uTheta^{1+\frac{3}{4}}  ds   \notag  \\
\leq & \frac{1}{4} \sigma^{-\frac{1}{2 }} + 27   \Bigl(\frac{6\lambda_1}{\lambda_0}\Bigr)^{2} \sigma  \sigma^{-\frac{1}{2 }} +   4   \Bigl(\frac{6\lambda_1}{\lambda_0}\Bigr)^{ \frac{3}{2}}\sigma^{\frac{1}{2 }}  \sigma^{-\frac{1}{2 }} +  24  \Bigl(\frac{6\lambda_1}{\lambda_0}\Bigr)^{\frac{3}{4}}  \sigma^{\frac{1}{4 } }  \sigma^{-\frac{1}{2 }}  
< \frac{1}{2} \sigma^{-\frac{1}{2 }}
\end{align*}
for $(t,\xi)\in[0,T)\times \del{}\Omega(0)$.
Now we have improved the estimates of $|\ke|$, $ \kss $ and $|\kb_{jk}|$, then by bootstrap argument (see Appendix \ref{a:btpr}), we have proved \eqref{e:dtdw2} of the claim  for $(t,\xi)\in[0,T^\star)\times \del{}\Omega(0)$ and in turn, we complete the proof of this lemma.

\section{Discussions}\label{e:intsing}
The blowup theorems \ref{t:blupthm0} and  \ref{t:blupthm} imply  singularities. However, the Virial Theorem \ref{t:bluplem} can not give us more information on the possible singularities. Let us remark some interesting singularities. 

\subsection{Mass accretions (protostar formations) and fragmentation}
It is direct to verify 
\begin{equation}\label{e:starfrg}
	\int^{T^\star}_0  \underline{\Theta}(s,\xi)     ds=\pm\infty,
\end{equation}
for every  $\xi\in\mathring{\Omega}_\epsilon(0) $, is a special type of singularities given in the strong blowup theorem \ref{t:blupthm}, and if \eqref{e:starfrg} holds, by \cite[eq.$(3.13)$]{Liu2021d}, we reach $\underline{\rrho}(T^\star,\xi)=+\infty$ or $0$. $\underline{\rrho}(T^\star,\xi)=+\infty$ can be interpreted as the star formation (mass accretion), and $\underline{\rrho}(T^\star,\xi)=0$ implies the fragmentation process in astrophysics. Therefore, we need more information from data to conclude \eqref{e:starfrg} for the purpose of star formations and fragmentation.

\subsection{Physical vacuum boundary}
By \eqref{e:blup2} of the strong blowup theorem \ref{t:blupthm}, another possible singularity satisfies
	\begin{equation}\label{e:blup2aaaa}
	\int^{T^\star}_0  \|\nabla \rrho^{\frac{\gamma-1}{2}}(s)\|_{ L^\infty(\Omega(s))}   ds=+\infty
\end{equation}
which, in terms of the speed of sound, yields
	\begin{equation}\label{e:blup2aa}
	  \frac{1}{2\sqrt{K\gamma}} \int^{T^\star}_0  \| \rrho^{-\frac{\gamma-1}{2}}\nabla c_s^2(s)\|_{ L^\infty(\Omega(s))}   ds=\int^{T^\star}_0  \|\nabla c_s(s)\|_{ L^\infty(\Omega(s))}   ds=+\infty.
\end{equation}
We see the physical vacuum boundary is an example of such singularities since $\rrho\searrow 0$ and $\nabla c_s^2\in(0,\infty)$, by \eqref{e:blup2aa}, imply \eqref{e:blup2aaaa}.

\subsection{Shocks and shear/rotational singularities}
See \cite[\S VII]{Brauer1998} for  discussions of these cases.

\appendix

\section{Preliminary lemmas on  conservation laws}\label{a:cons}
\subsection{Proof of Lemma \ref{t:iden}}
\begin{proof}[Proof of Lemma \ref{t:iden}]
	Using Fubini–Tonelli  theorem\footnote{In this section, we can always use $\rrho \in C^1([0,\infty)\times \Rbb^3)$ and \eqref{e:spintg} to estimate and verify the condition of Fubini–Tonelli theorem, $\int_{\Omega(t)}\bigl(\int_{\Omega(t)}|f(\bx,\by)|d\bx\bigr) d\by<\infty$. We omit the details. } and the expression \eqref{e:dphi1} of $\del{j} \Phi$, the integral  $\int_{\Omega(t)}   \bigl( \rrho \del{ }^k \Phi  \bigr) d^3\bx$ can be expressed by
	\begin{align*}
	& \int_{\Omega(t)}  \rrho (t,\bx)\del{ }^k \Phi (t,\bx)  d^3\bx =  \int_{\Omega(t)} \rrho(t,\bx) \Bigl(\frac{1}{4\pi} \int_{\Omega(t)}    \rrho (t, \by) \frac{ x^k-y^k }{|\bx-\by |^3} d^3\by \Bigr) d^3\bx  \\
	&\hspace{0.5cm}=   -\int_{\Omega(t)} \rrho (t, \by) \Bigl( \frac{1}{4\pi} \int_{\Omega(t)}  \rrho(t,\bx)   \frac{y^k- x^k }{|\bx-\by |^3} d^3\bx\Bigr) d^3\by= -\int_{\Omega(t)}   \rrho (t,\by)\del{ }^k \Phi (t,\by)  d^3\by,
	\end{align*}
	that is
	\begin{equation*}
	\int_{\Omega(t)}  \rrho(t,\bx) \del{ }^k \Phi (t,\bx)  d^3\bx = -	\int_{\Omega(t)}  \rrho(t,\bx) \del{ }^k \Phi (t,\bx)   d^3\bx.
	\end{equation*}
	This concludes \eqref{e:rphin0}.
	
	Then we turn to \eqref{e:rpx3},
with the help of Fubini–Tonelli theorem, \eqref{e:Newpot}, \eqref{e:dphi1}, we derive
	\begin{align*}
	&\int_{\Omega(t)}  \rrho (t,\bx)x^j \del{j} \Phi(t,\bx)  d^3\bx =  \frac{1}{4\pi}\int_{\Omega(t)} \int_{\Omega(t)}   \rrho(t,\by)\rrho(t,\bx)\frac{\delta_{kj}(x^k-y^k) x^j }{|\bx-\by|^3} d^3\by d^3\bx \notag  \\
	& \hspace{0.5cm} =  \frac{1}{4\pi}\int_{\Omega(t)} \int_{\Omega(t)}   \rrho(t,\by)\rrho(t,\bx)\frac{\delta_{kj}(x^k-y^k)(x^j-y^j)}{|\bx-\by|^3} d^3\by d^3\bx \notag  \\
	& \hspace{1cm} +  \frac{1}{4\pi}\int_{\Omega(t)} \int_{\Omega(t)}   \rrho(t,\by)\rrho(t,\bx)\frac{\delta_{kj}(x^k-y^k) y^j }{|\bx-\by|^3} d^3\by d^3\bx  \notag  \\
	& \hspace{0.5cm} =  \int_{\Omega(t)}\rrho(t,\bx) \biggl(\frac{1}{4\pi}\int_{\Omega(t)}   \rrho(t,\by)\frac{1}{|\bx-\by| } d^3\by\biggr) d^3\bx \notag  \\
	& \hspace{1cm} -  \int_{\Omega(t)}\rrho(t,\by)y^j \biggl(\frac{1}{4\pi}\int_{\Omega(t)}   \rrho(t,\bx)\frac{\delta_{kj}(y^k-x^k)  }{|\bx-\by|^3} d^3\bx \biggr) d^3\by  \notag  \\
	&\hspace{0.5cm}=  -\int_{\Omega(t)}  \rrho(t,\bx)   \Phi(t,\bx) d^3\bx-  \int_{\Omega(t)}\rrho(t,\by)y^j \del{j}\Phi(t,\by) d^3\by .
	\end{align*}
By moving the last term of above identity to the left hand and changing its variable $\by$ to $\bx$, we arrive at
\begin{equation*}
	\int_{\Omega(t)}  \rrho(t,\bx)  \del{j} \Phi(t,\bx)  x^j d^3\bx = -\frac{1}{2}\int_{\Omega(t)}  \rrho (t,\bx)   \Phi(t,\bx)  d^3\bx.
\end{equation*}
	This completes the proof.
\end{proof}

\subsection{Proof of Lemma \ref{t:ctoms}}

\begin{proof}[Proof of Lemma \ref{t:ctoms}] The conservation of mass and Energy are fundamental and they can be obtained directly by the Euler equations, we omit the proofs and one can find them in various references. We only prove 
the conservation of velocities of center of mass in this appendix. This proof is due to direct calculations of $\frac{dx^k_c}{dt} $ and $\frac{d^2 x^k_c}{dt^2} $. We calculate the velocity of the center of mass $\frac{dx^k_c}{dt}$,
	\begin{align}\label{e:caldtxc1}
	\frac{dx^k_c}{dt} = & \frac{d }{dt} \Bigl(\frac{1}{ M }\int_{\Omega(t)} \rrho x^k d^3\bx \Bigr)  
	=  \frac{1}{ M } \int_{\Omega(t)} \del{t}  \rrho x^k d^3\bx  \notag  \\
	= & - \frac{1}{ M } \int_{\Omega(t)} \del{i} ( \rrho\mathring{w}^i) x^k d^3\bx  
	=   \frac{1}{ M } \int_{\Omega(t)}  \rrho\mathring{w}^k  d^3\bx,  
	\end{align}
	for $t\in [0,T)$.
	This concludes \eqref{e:vlcm}.
	
	Furthermore, we claim this velocity is a constant. To do so, we only need to prove $\frac{d^2 x^k_c}{dt^2}  \equiv 0$. Before that, we firstly derive an identity which is a sum of the Euler equations $\mathring{w}^k \times$ \eqref{e:NEul1}  and \eqref{e:NEul2},
	\begin{equation}\label{e:smeuler}
	\del{t}(\rrho\mathring{w}^k)+\del{i} (\rrho\mathring{w}^i\mathring{w}^k) +\delta^{ik} \del{i} p =- \rrho\del{ }^k  \Phi.
	\end{equation}
	Then by using \eqref{e:smeuler}, differentiating \eqref{e:caldtxc1} with respect to $t$ yields
	\begin{align*}
	\frac{d^2 x^k_c}{dt^2} = & \frac{d}{dt}\Bigl(\frac{1}{ M } \int_{\Omega(t)}  \rrho\mathring{w}^k  d^3\bx \Bigr)  
	=  \frac{1}{ M } \int_{\Omega(t)}  \del{t}\bigl(\rrho\mathring{w}^k \bigr) d^3\bx   \notag  \\
	= & \frac{1}{ M } \int_{\Omega(t)}   \Bigl(-\del{i}(\rrho\mathring{w}^i\mathring{w}^k)- \del{i}(\delta^{ik}  p)- \rrho \del{ }^k \Phi  \Bigr) d^3\bx   
	=   -\frac{1}{  M }  \int_{\Omega(t)}   \Bigl( \rrho \del{ }^k \Phi  \Bigr) d^3\bx \overset{\eqref{e:rphin0}}{=}0.
	\end{align*}

	This completes the proof of this lemma.
\end{proof}

\section{Tools of analysis}

\subsection{Bootstrap principle}\label{a:btpr}

The following bootstrap principle can be found in Tao's \cite[Proposition $1.21$]{Tao2006}. We present the statement of this principle without the proof. Please refer to \cite{Tao2006} for more detailed information and examples.
\begin{proposition}[Abstract Bootstrap Principle]\label{t:btpr}
	Let $\mathcal{I}$ be a time interval, and for each $t\in \mathcal{I}$ suppose we have two statements, a ``hypothesis'' $H(t)$ and a ``conclusion'' $C(t)$. Suppose we can verify the following four assertions:
	\begin{enumerate}
		\item (Hypothesis implies conclusion) If $H(t)$ is true for some time $t\in \mathcal{I}$, then $C(t)$ is also true for that time $t$. \item (Conclusion is stronger than hypothesis) If $C(t)$ is true for some time $t\in \mathcal{I}$, then $H(t^\prime)$ is true for all $t^\prime \in \mathcal{I}$ in a neighborhood of $t$.
		\item (Conclusion is closed) If $t_1$, $t_2, \cdots$ is a sequence of times in $\mathcal{I}$ which converges to another time $t\in I$, and $C(t_n)$ is true for all $t_n$, then $C(t)$ is true.
		\item (Base case) $H(t)$ is true for at least one time $t\in \mathcal{I}$.
	\end{enumerate}		
	Then $C(t)$ is true for all $t\in  \mathcal{I}$.
\end{proposition}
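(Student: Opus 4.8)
The plan is to deduce the Proposition from the connectedness of the interval $\mathcal{I}$ via the standard ``clopen set'' argument. Set
\[
A := \{ t \in \mathcal{I} : C(t) \text{ holds} \};
\]
the goal is to prove $A = \mathcal{I}$, which is exactly the assertion of the Proposition. First, $A \neq \varnothing$: by assertion $(4)$ there is a time $t_0 \in \mathcal{I}$ for which $H(t_0)$ is true, and then assertion $(1)$ gives $C(t_0)$, so $t_0 \in A$.

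Next I would show that $A$ is both relatively open and relatively closed in $\mathcal{I}$. For openness, fix $t \in A$; since $C(t)$ holds, assertion $(2)$ furnishes a neighbourhood $U$ of $t$ in $\mathcal{I}$ on which $H$ holds throughout, and assertion $(1)$ then upgrades this to $C(t')$ for every $t' \in U$, so $U \subseteq A$. For closedness, let $t \in \mathcal{I}$ lie in the closure of $A$ relative to $\mathcal{I}$; since $\mathcal{I} \subseteq \Rbb$ is metrizable we may pick a sequence $t_n \in A$ with $t_n \to t$, and assertion $(3)$ then yields $C(t)$, i.e.\ $t \in A$. Hence $A$ is clopen in $\mathcal{I}$.

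Finally, since an interval $\mathcal{I} \subseteq \Rbb$ is connected, a nonempty subset that is simultaneously open and closed in $\mathcal{I}$ must coincide with $\mathcal{I}$; therefore $A = \mathcal{I}$, and $C(t)$ holds for all $t \in \mathcal{I}$. There is no genuine analytic difficulty here: the argument is purely topological and identical to \cite[Proposition~1.21]{Tao2006}. The only point requiring care is that the neighbourhoods in $(2)$ and the limits in $(3)$ must be read within $\mathcal{I}$ equipped with its subspace topology, so that the openness and closedness established above are relative to $\mathcal{I}$; with this reading the connectedness conclusion applies without modification, including at any endpoints of $\mathcal{I}$ that happen to belong to $\mathcal{I}$ (as for the half-open intervals $[0,T)$ arising in the applications above).
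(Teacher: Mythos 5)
Your clopen/connectedness argument is correct: the set $A=\{t\in\mathcal{I}: C(t)\}$ is nonempty by (4) and (1), relatively open by (2) followed by (1), and relatively closed by (3) (sequential closedness suffices since $\mathcal{I}\subset\Rbb$ is metrizable), so connectedness of the interval gives $A=\mathcal{I}$. The paper itself offers no proof of this proposition and simply cites \cite[Proposition 1.21]{Tao2006}; your argument is the standard one given there (continuous induction), so there is nothing to reconcile.
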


\subsection{Representation formulas for derivatives of Newtonian potential}\label{s:Nton}
The following two representation theorems for the derivatives of the Newtonian potential come from \cite[Lemma $4.1$ and $4.2$]{Gilbarg1983} and readers can found proofs therein.

First we denote $\Gamma$ is the fundamental solution of Laplace's equation given by
\begin{align*}
\Gamma(\bx-\by)=\Gamma(|\bx-\by|)=\begin{cases}
\frac{1}{n(2-n)\omega_n}|\bx-\by|^{2-n}, \quad & n>2  \\
\frac{1}{2\pi}\ln|\bx-\by|, & n=2.
\end{cases}
\end{align*}
where $\omega_n$ is the volume of unit ball in $\Rbb^n$. It is clear the following estimates of  derivatives hold
\begin{align*}
|\del{i}\Gamma(\bx-\by)|\leq \frac{1}{n\omega_n}|\bx-\by|^{1-n}  
\\
|\del{i}\del{j}\Gamma(\bx-\by)|\leq \frac{1}{\omega_n}|\bx-\by|^{-n}.  
\end{align*}

\begin{lemma}\label{t:dphi}
	Let $f$ be bounded and integrable in the bounded domain $D$ and let $\Phi$ be the Newtonian potential of $f$. Then $\Phi\in C^1(\Rbb^n)$ and for any $\bx\in D$,
	\begin{equation*}
	\del{i}\Phi(\bx)=\int_D \del{i}\Gamma(\bx-\by)f(\by)d^n\by
	\end{equation*}
	for $i=1,\cdots,n$.
\end{lemma}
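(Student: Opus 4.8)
The plan is to prove Lemma \ref{t:dphi} by the classical regularization (cut-off) argument, which avoids differentiating under the integral sign directly — an operation that is not immediately legitimate because $\del{i}\Gamma(\bx-\by)$, though still locally integrable, blows up at $\bx=\by$. \textbf{Step 1 (the candidate derivative is well defined and continuous).} I would set $v_i(\bx):=\int_D \del{i}\Gamma(\bx-\by)f(\by)\,d^n\by$ and use the pointwise bound $|\del{i}\Gamma(\bx-\by)|\leq \tfrac{1}{n\omega_n}|\bx-\by|^{1-n}$ recorded before the lemma, together with $\sup_D|f|<\infty$ and the fact that $r^{1-n}$ is integrable against $r^{n-1}\,dr$ near $r=0$, to see that this integral converges absolutely and uniformly for $\bx$ in compact sets; a routine dominated-convergence (or uniform-estimate) argument then shows $v_i\in C^0(\Rbb^n)$.

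\textbf{Step 2 (regularization).} Fix a $C^1$ cut-off $\eta:\Rbb_{\geq 0}\to[0,1]$ with $\eta(s)=0$ for $s\leq 1$, $\eta(s)=1$ for $s\geq 2$ and $|\eta'|\leq 2$, and for $\varepsilon>0$ define
\[
\Phi_\varepsilon(\bx):=\int_D \Gamma(\bx-\by)\,\eta\!\left(\tfrac{|\bx-\by|}{\varepsilon}\right)f(\by)\,d^n\by .
\]
The integrand now vanishes in a neighbourhood of $\bx=\by$, hence is $C^1$ in $\bx$, so $\Phi_\varepsilon\in C^1(\Rbb^n)$ and one may differentiate under the integral to obtain
\[
\del{i}\Phi_\varepsilon(\bx)=\int_D\left[\del{i}\Gamma(\bx-\by)\,\eta\!\left(\tfrac{|\bx-\by|}{\varepsilon}\right)+\Gamma(\bx-\by)\,\del{i}\!\left(\eta\!\left(\tfrac{|\bx-\by|}{\varepsilon}\right)\right)\right]f(\by)\,d^n\by .
\]

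\textbf{Step 3 (uniform convergence) and Step 4 (conclusion).} Using $\sup_D|f|<\infty$ and an elementary polar-coordinates computation (as in the footnote containing \eqref{e:spintg}), I would estimate
\[
|\Phi(\bx)-\Phi_\varepsilon(\bx)|\leq \sup_D|f|\int_{|\bx-\by|\leq 2\varepsilon}|\Gamma(\bx-\by)|\,d^n\by ,
\]
and, splitting $\del{i}\Phi_\varepsilon$ into its two pieces,
\[
\left|v_i(\bx)-\del{i}\Phi_\varepsilon(\bx)\right|\leq \sup_D|f|\int_{|\bx-\by|\leq 2\varepsilon}|\del{i}\Gamma(\bx-\by)|\,d^n\by+\frac{2\sup_D|f|}{\varepsilon}\int_{\varepsilon\leq|\bx-\by|\leq 2\varepsilon}|\Gamma(\bx-\by)|\,d^n\by ,
\]
both right-hand sides being $O(\varepsilon)$ for $n\geq 3$ (with a harmless logarithmic factor when $n=2$), hence tending to $0$ uniformly on compact sets. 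Since $\Phi_\varepsilon\to\Phi$ and $\del{i}\Phi_\varepsilon\to v_i$ locally uniformly with each $\Phi_\varepsilon\in C^1$, the standard theorem on uniform limits of $C^1$ functions yields $\Phi\in C^1(\Rbb^n)$ with $\del{i}\Phi=v_i$, which is exactly the asserted formula.

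The main (and essentially only) obstacle is the second term in the Step 3 estimate, i.e. controlling the contribution $\Gamma\,\del{i}\eta$ of the smoothing: this works precisely because $\Gamma$ is only mildly singular ($|\bx-\by|^{2-n}$, or $\log$ when $n=2$), so its integral over the thin annulus $\{\varepsilon\leq|\bx-\by|\leq 2\varepsilon\}$ is $O(\varepsilon^2)$, which beats the $1/\varepsilon$ coming from $\del{i}\eta$. One should note that the same scheme fails for the second derivatives — Lemma \ref{t:ddphi} / \cite[Lemma $4.2$]{Gilbarg1983} — where $\del{i}\del{j}\Gamma\sim|\bx-\by|^{-n}$ is no longer integrable, and a principal-value together with a Hölder-continuity hypothesis on $f$ is required instead.
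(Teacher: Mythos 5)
Your proposal is correct; the paper does not prove this lemma itself but simply cites \cite[Lemma 4.1]{Gilbarg1983}, and your regularization argument (cut-off $\Gamma(\bx-\by)\,\eta(|\bx-\by|/\varepsilon)$, the $O(\varepsilon)$ annulus estimates, and the uniform-limit theorem for $C^1$ functions) is precisely the classical proof given there. Nothing further is needed.
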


\begin{lemma}\label{t:ddphi}
	Let $f$ be bounded and locally H\"older continuous with exponent $\alpha\leq 1$ in the bounded domain $D$ and let $\Phi$ be the Newtonian potential of $f$. Then $\Phi\in C^2(D)$, $\Delta \Phi=f$ in $D$, and for any $\bx\in D$,
	\begin{equation*}
	\del{i}\del{j}\Phi(\bx)=\int_{D_0} \del{i}\del{j}\Gamma(\bx-\by)(f(\by)-f(\bx))d^n\by-f(\bx)\int_{\del{}D_0}\del{i}\Gamma(\bx-\by)\nu_j(\by) d\sigma_\by
	\end{equation*}
	for $i,j=1,\cdots,n$, where $D_0$ is any domain  for which the divergence theorem holds and satisfies $D\subset\subset D_0$, and $f$ is extended to vanish outside $D$, and $\nu^j:=\delta^{ij}\nu_i$ is the outward-pointing unit normal vector of the boundary.
\end{lemma}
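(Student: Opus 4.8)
The plan is to run the classical potential‑theoretic argument behind \cite[Lemma~4.2]{Gilbarg1983}: use the first‑derivative representation already in hand, regularize the kernel near the diagonal, and pass to the limit. Write $\Phi(\bx)=\int_D\Gamma(\bx-\by)f(\by)\,d^n\by$ and, by Lemma~\ref{t:dphi}, set $v_i(\bx):=\del{i}\Phi(\bx)=\int_D\del{i}\Gamma(\bx-\by)f(\by)\,d^n\by$, which is already continuous on $\Rbb^n$. The goal is to prove $v_i\in C^1(D)$ with $\del{j}v_i$ equal to the candidate
\[
w_{ij}(\bx):=\int_{D_0}\del{i}\del{j}\Gamma(\bx-\by)\bigl(f(\by)-f(\bx)\bigr)\,d^n\by-f(\bx)\int_{\del{}D_0}\del{i}\Gamma(\bx-\by)\nu_j(\by)\,d\sigma_\by,
\]
where $f$ is extended by $0$ outside $D$. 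First I would note that $w_{ij}$ is well defined and continuous on $D$: near $\bx$ the Hölder bound $|f(\by)-f(\bx)|\le C|\bx-\by|^\alpha$ dominates the kernel growth $|\del{i}\del{j}\Gamma(\bx-\by)|\le\omega_n^{-1}|\bx-\by|^{-n}$, making the integrand locally integrable of size $|\bx-\by|^{\alpha-n}$, and the surface term is smooth in $\bx$ since $\bx\in D\subset\subset D_0$.

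Next I would introduce the regularization. Fix $\eta\in C^\infty(\Rbb_{\ge0};[0,1])$ with $\eta\equiv0$ on $[0,1]$ and $\eta\equiv1$ on $[2,\infty)$, put $\eta_\epsilon(\bx-\by):=\eta(|\bx-\by|/\epsilon)$, and define the manifestly $C^1$ approximants $v_i^\epsilon(\bx):=\int_{D_0}\del{i}\Gamma(\bx-\by)\,\eta_\epsilon(\bx-\by)\,f(\by)\,d^n\by$. Differentiating under the integral sign, using the identity $\del{x_j}\bigl[\Gamma(\bx-\by)\bigr]=-\del{y_j}\bigl[\Gamma(\bx-\by)\bigr]$, splitting $f(\by)=\bigl(f(\by)-f(\bx)\bigr)+f(\bx)$, and applying the divergence theorem on $D_0$ to the constant piece $f(\bx)$ (the cutoff is $\equiv1$ in a neighborhood of $\del{}D_0$ once $\epsilon$ is small, so only the surface integral over $\del{}D_0$ survives) yields
\[
\del{j}v_i^\epsilon(\bx)=\int_{D_0}\del{x_j}\!\bigl[\del{i}\Gamma(\bx-\by)\,\eta_\epsilon(\bx-\by)\bigr]\bigl(f(\by)-f(\bx)\bigr)\,d^n\by-f(\bx)\int_{\del{}D_0}\del{i}\Gamma(\bx-\by)\nu_j(\by)\,d\sigma_\by.
\]

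Then I would pass to the limit $\epsilon\to0$, locally uniformly in $\bx$. Since $1-\eta_\epsilon$ is supported in $\{|\bx-\by|<2\epsilon\}$ and $|\del{i}\Gamma|\le(n\omega_n)^{-1}|\bx-\by|^{1-n}$, one gets $|v_i(\bx)-v_i^\epsilon(\bx)|\le C\|f\|_\infty\epsilon\to0$; and $w_{ij}(\bx)-\del{j}v_i^\epsilon(\bx)$ is an integral over the annulus $\{\epsilon<|\bx-\by|<2\epsilon\}$ whose integrand is controlled by $C|\bx-\by|^{\alpha-n}$ (the factor $\epsilon^{-1}$ produced by differentiating $\eta_\epsilon$ is absorbed by the bound $|\del{i}\Gamma|\le C|\bx-\by|^{1-n}$ on that annulus), hence is $O(\epsilon^\alpha)\to0$. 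The standard theorem on uniform convergence of derivatives then gives $v_i\in C^1(D)$ and $\del{j}v_i=w_{ij}$, so $\Phi\in C^2(D)$ with the asserted Hessian formula.

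Finally, for $\Delta\Phi=f$ I would take the trace $i=j$ and sum: the volume integrand carries $\sum_i\del{i}\del{i}\Gamma(\bx-\by)=0$ for $\by\ne\bx$ and vanishes in the limit by the same annulus estimate (in the regularized identity this term is exactly $(\nabla\Gamma)(\bx-\by)\cdot\nabla_x\eta_\epsilon$, supported on the thin annulus), while the boundary term collapses to $f(\bx)$ by the divergence theorem applied to the divergence‑free field $\by\mapsto(\nabla\Gamma)(\bx-\by)$ on $D_0\setminus B_\rho(\bx)$, together with the normalization of $\Gamma$ as a fundamental solution. I expect the genuinely delicate step to be this limit passage — making precise that the discrepancy between $w_{ij}$ (and its trace) and the regularized expressions is concentrated on a shrinking annulus and is $O(\epsilon^\alpha)$; this is exactly where the local Hölder continuity of $f$ is indispensable, a bare continuity assumption being insufficient.
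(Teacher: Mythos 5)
Your proposal is correct, and it is essentially the same argument the paper relies on: the paper does not prove Lemma \ref{t:ddphi} itself but cites \cite[Lemma 4.2]{Gilbarg1983}, whose classical proof is exactly your scheme (cut off the kernel with $\eta_\epsilon$, differentiate under the integral, apply the divergence theorem on $D_0$ to the constant piece $f(\bx)$, and pass to the limit using the local H\"older bound, then take the trace for $\Delta\Phi=f$). The only tiny imprecision is that the discrepancy $w_{ij}-\del{j}v_i^\epsilon$ is supported on the full ball $\{|\bx-\by|<2\epsilon\}$ rather than just the annulus, since on $\{|\bx-\by|\le\epsilon\}$ the regularized integrand vanishes while the integrand of $w_{ij}$ does not; the same $O(\epsilon^\alpha)$ estimate via $|\bx-\by|^{\alpha-n}$ covers this, so the conclusion stands.
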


\subsection{Reynold's transport theorem}\label{s:RTT}
The following Reynold’s transport theorem is a multidimensional version of Leibniz integral rule, which states how to exchange the derivative and the integral if the region of integration is changing with time. The proof can be found in various references of fluid dynamics and calculus (see, for example, \cite[Page $578$]{Marsden2003} and \cite[\S$1$]{Chorin1993}).
\begin{theorem}[Reynold's Transport Theorem]\label{t:RTT}
	Suppose a field $C^1\ni f:[0,T)\times \overline{\Omega(t)} \rightarrow V$ and the flow $\chi:[0,T)\times \overline{\Omega(0)}\rightarrow \overline{\Omega(t)}\subset \Rbb^3$ generated by a vector field $w^i\in C^1([0,T)\times\overline{\Omega(t)},\Rbb^3)$, such that  $\chi(t,\xi)=\bx \in \overline{\Omega(t)}$ for every  $(t,\xi)\in[0,T)\times \overline{\Omega(0)}$ where $T>0$ is a constant, $\Omega(t):=\chi(t,\Omega(0)) \subset \Rbb^3$ is a domain depending on $t$ and $V \subset \Rbb^n$ for some $n \in \Zbb_{\geq 1}$. Then
	\begin{equation*}
	\frac{d}{dt}\int_{\Omega(t)}f(t,\bx)d^3\bx =  \int_{\Omega(t)}\Bigl[\del{t} f(t,\bx)  + \del{i} \bigl(f(t,\bx)w^i(t,\bx)\bigr) \Bigr]d^3\bx.
	\end{equation*}
\end{theorem}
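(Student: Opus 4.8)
The plan is to pass to the Lagrangian description of \S\ref{s:lag}, in which the domain of integration becomes the fixed set $\Omega(0)$, differentiate under the integral sign, and then transform back. Writing $\bx=\chi(t,\xi)$ with $\xi\in\Omega(0)$ and setting $J(t,\xi):=\det\bigl(\del{\xi^j}\chi^i(t,\xi)\bigr)$ for the Jacobian of the flow, the change-of-variables formula gives
\[
	\int_{\Omega(t)}f(t,\bx)\,d^3\bx=\int_{\Omega(0)}\underline{f}(t,\xi)\,J(t,\xi)\,d^3\xi .
\]
Since $\Omega(0)$ is independent of $t$ and, under the stated $C^1$ hypotheses on $f$ and $w^i$ (hence on $\chi$), the integrand $\underline{f}J$ and its $t$-derivative are continuous and bounded on the compact set $[0,T^\prime]\times\overline{\Omega(0)}$ for each $T^\prime<T$, I may differentiate under the integral sign to obtain
\[
	\frac{d}{dt}\int_{\Omega(t)}f\,d^3\bx=\int_{\Omega(0)}\bigl[\,(\del{t}\underline{f})\,J+\underline{f}\,\del{t}J\,\bigr]\,d^3\xi .
\]

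Two identities then finish the computation. The first, already recorded in \S\ref{s:lag}, is $\del{t}\underline{f}=\underline{D_tf}=\underline{\del{t}f+w^i\del{i}f}$. The second, which is the actual content of the theorem, is \emph{Euler's expansion formula} $\del{t}J=J\,\underline{\del{i}w^i}$; I would derive it from Jacobi's formula $\del{t}\det A=\det A\cdot\tr\bigl(A^{-1}\del{t}A\bigr)$ applied to $A^i{}_j:=\del{\xi^j}\chi^i$. Exchanging $\del{t}$ and $\del{\xi^j}$ and using $\del{t}\chi^i=\underline{w}^i$ gives $\del{t}A^i{}_j=\del{\xi^j}\bigl(w^i(t,\chi(t,\xi))\bigr)=(\del{k}w^i)(t,\chi)\,A^k{}_j$, so that $\tr\bigl(A^{-1}\del{t}A\bigr)=(A^{-1})^j{}_i\,A^k{}_j\,(\del{k}w^i)(t,\chi)=\delta^k_i\,(\del{k}w^i)(t,\chi)=\underline{\del{i}w^i}$. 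Substituting both identities,
\[
	(\del{t}\underline{f})\,J+\underline{f}\,\del{t}J=J\,\underline{\bigl(\del{t}f+w^i\del{i}f+f\,\del{i}w^i\bigr)}=J\,\underline{\bigl(\del{t}f+\del{i}(fw^i)\bigr)},
\]
and undoing the change of variables $\xi\mapsto\bx$ in the last integral produces exactly $\int_{\Omega(t)}\bigl[\del{t}f+\del{i}(fw^i)\bigr]\,d^3\bx$, as claimed.

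The main obstacle is the verification of Euler's expansion formula together with the regularity bookkeeping it requires: one must know that $\chi(t,\cdot)$ is a $C^1$ diffeomorphism of $\Omega(0)$ onto $\Omega(t)$ with $J>0$, so that the change of variables is legitimate and $A^{-1}$ exists. This follows because $J(0,\cdot)\equiv1$, $J$ is continuous, and the scalar ODE $\del{t}J=J\,\underline{\del{i}w^i}$ (which the expansion formula establishes) forces $J(t,\xi)=\exp\!\bigl(\int_0^t\underline{\del{i}w^i}(s,\xi)\,ds\bigr)>0$. Once this is in hand, differentiation under the integral sign is routine on compact time subintervals. An alternative I would keep in reserve is the direct Eulerian argument: split the difference quotient as $\frac1h\int_{\Omega(t)}\bigl(f(t+h,\cdot)-f(t,\cdot)\bigr)+\frac1h\bigl(\int_{\Omega(t+h)}-\int_{\Omega(t)}\bigr)f(t+h,\cdot)$; the first term tends to $\int_{\Omega(t)}\del{t}f$, while the second, using that $\del{}\Omega(t)$ is swept with normal speed $w^i\nu_i$, tends to the boundary flux $\int_{\del{}\Omega(t)}f\,w^i\nu_i\,d\sigma=\int_{\Omega(t)}\del{i}(fw^i)$ by the divergence theorem. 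This gives the same conclusion but demands more care with the geometry of the moving boundary, which is why the Lagrangian route is preferable.
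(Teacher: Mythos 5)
Your proof is correct, and it is essentially the same argument the paper relies on: the paper does not prove Theorem \ref{t:RTT} itself but defers to the cited references (Chorin--Marsden, Marsden--Tromba), where exactly this Lagrangian route — change of variables to the fixed domain $\Omega(0)$, Euler's expansion formula $\del{t}J=J\,\underline{\del{i}w^i}$, then transforming back — is the standard proof. The only point to tidy is a mild circularity: Jacobi's formula in the form $\del{t}\det A=\det A\cdot\tr\bigl(A^{-1}\del{t}A\bigr)$ presupposes invertibility of $A$, so either use the adjugate form $\del{t}\det A=\tr\bigl(\mathrm{adj}(A)\,\del{t}A\bigr)$ or run a continuation argument from $J(0,\cdot)\equiv 1$ (the set where $J>0$ is open and, via the resulting exponential formula, closed), after which your representation $J(t,\xi)=\exp\bigl(\int_0^t\underline{\del{i}w^i}(s,\xi)\,ds\bigr)>0$ and the change of variables are fully justified.
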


\section*{Acknowledgement}
This work is partially supported by the China Postdoctoral Science Foundation Grant under the grant No. $2018M641054$ and the Fundamental Research Funds for the Central Universities, HUST: $5003011036$. Part of this work was completed during the program ``General Relativity, Geometry and Analysis: beyond the first 100 years after Einstein'' supported by the Swedish Research Council under grant no. 2016-06596 while the author was visiting Institut Mittag-Leffler in Djursholm, Sweden during September--December, 2019. I am very grateful for all the supports and hospitality during my visits.

\bibliographystyle{amsplain}
\bibliography{Reference_Chao}

\providecommand{\bysame}{\leavevmode\hbox to3em{\hrulefill}\thinspace}
\providecommand{\MR}{\relax\ifhmode\unskip\space\fi MR }
\providecommand{\MRhref}[2]{%
  \href{http://www.ams.org/mathscinet-getitem?mr=#1}{#2}
}
\providecommand{\href}[2]{#2}
\begin{thebibliography}{10}

\bibitem{Adams2003b}
Robert~A Adams and John~JF Fournier, \emph{Sobolev spaces}, second ed.,
  Academic Press Inc, 2003.

\bibitem{Alinhac1995}
Serge Alinhac, \emph{Blowup for nonlinear hyperbolic equations}, Birkh{\"a}user
  Boston, 1995.

\bibitem{Brauer1998}
Uwe Brauer, \emph{Breakdown of smooth solutions of the three-dimensional
  {E}uler-{P}oisson system}, Journal of Mathematical Physics \textbf{39}
  (1998), no.~2, 1050--1074. \MR{1600431}

\bibitem{Brauer2014}
Uwe Brauer and Lavi Karp, \emph{Local existence of solutions of self
  gravitating relativistic perfect fluids}, Communications in Mathematical
  Physics \textbf{325} (2014), 105--141.

\bibitem{Brauer2018}
\bysame, \emph{Local existence of solutions to the {E}uler-{P}oisson system,
  including densities without compact support}, Journal of Differential
  Equations \textbf{264} (2018), no.~2, 755--785. \MR{3720829}

\bibitem{Chorin1993}
Alexandre~J. Chorin and Jerrold~E. Marsden, \emph{A mathematical introduction
  to fluid mechanics}, Springer New York, 1993.

\bibitem{Christodoulou2000}
Demetrios Christodoulou and Hans Lindblad, \emph{On the motion of the free
  surface of a liquid}, Communications on Pure and Applied Mathematics
  \textbf{53} (2000), no.~12, 1536--1602.

\bibitem{Coutand2010}
Daniel Coutand, Hans Lindblad, and Steve Shkoller, \emph{A priori estimates for
  the free-boundary 3{D} compressible {E}uler equations in physical vacuum},
  Communications in Mathematical Physics \textbf{296} (2010), no.~2, 559--587.
  \MR{2608125}

\bibitem{Coutand2011}
Daniel Coutand and Steve Shkoller, \emph{Well-posedness in smooth function
  spaces for moving-boundary 1-{D} compressible {E}uler equations in physical
  vacuum}, Communications on Pure and Applied Mathematics \textbf{64} (2011),
  no.~3, 328--366. \MR{2779087}

\bibitem{Coutand2012}
\bysame, \emph{Well-posedness in smooth function spaces for the moving-boundary
  three-dimensional compressible {E}uler equations in physical vacuum}, Archive
  for Rational Mechanics and Analysis \textbf{206} (2012), no.~2, 515--616.
  \MR{2980528}

\bibitem{Friedrichs1954}
K.~O. Friedrichs, \emph{Symmetric hyperbolic linear differential equations},
  Communications on Pure and Applied Mathematics \textbf{7} (1954), no.~2,
  345--392.

\bibitem{Gilbarg1983}
David Gilbarg and Neil~S. Trudinger, \emph{Elliptic partial differential
  equations of second order}, Springer Berlin Heidelberg, 1983.

\bibitem{Guo2018}
Yan Guo, Mahir Had{\v{z}}i{\'c}, and Juhi Jang, \emph{Continued gravitational
  collapse for newtonian stars}, Archive for Rational Mechanics and Analysis
  (2020).

\bibitem{Hadzic2016}
Mahir Had{\v{z}}i{\'c} and Juhi Jang, \emph{Nonlinear stability of expanding
  star solutions of the radially symmetric mass-critical euler-poisson system},
  Communications on Pure and Applied Mathematics (2016).

\bibitem{Jang2008}
Juhi Jang, \emph{Nonlinear instability in gravitational euler--poisson systems
  for $\gamma=\frac{6}{5}$}, Archive for Rational Mechanics and Analysis
  \textbf{188} (2008), no.~2, 265--307.

\bibitem{Jang2014}
\bysame, \emph{Nonlinear instability theory of lane-emden stars},
  Communications on Pure and Applied Mathematics \textbf{67} (2014), no.~9,
  1418--1465.

\bibitem{Jang2009}
Juhi Jang and Nader Masmoudi, \emph{Well-posedness for compressible {E}uler
  equations with physical vacuum singularity}, Communications on Pure and
  Applied Mathematics \textbf{62} (2009), no.~10, 1327--1385. \MR{2547977}

\bibitem{Jang2012}
\bysame, \emph{Well and ill-posedness for compressible euler equations with
  vacuum}, Journal of Mathematical Physics \textbf{53} (2012), no.~11, 115625.

\bibitem{Jang2015}
\bysame, \emph{Well-posedness of compressible {E}uler equations in a physical
  vacuum}, Communications on Pure and Applied Mathematics \textbf{68} (2015),
  no.~1, 61--111. \MR{3280249}

\bibitem{Kato1975}
Tosio Kato, \emph{The cauchy problem for quasi-linear symmetric hyperbolic
  systems}, Archive for Rational Mechanics and Analysis \textbf{58} (1975),
  no.~3, 181--205.

\bibitem{Klainerman1981}
Sergiu Klainerman and Andrew Majda, \emph{Singular limits of quasilinear
  hyperbolic systems with large parameters and the incompressible limit of
  compressible fluids}, Communications on Pure and Applied Mathematics
  \textbf{34} (1981), 481--524.

\bibitem{Klainerman1982}
Sergiu Klainerman and Andrew Majda, \emph{Compressible and incompressible
  fluids}, Communications on Pure and Applied Mathematics \textbf{35} (1982),
  629--651.

\bibitem{Krumholz2017}
Mark~R. Krumholz, \emph{Star formation}, World Scientific Publishing Co, 2017.

\bibitem{Larsona}
Richard~B. Larson, \emph{The evolution of molecular clouds}, The Structure and
  Content of Molecular Clouds 25 Years of Molecular Radioastronomy, Springer
  Berlin Heidelberg, pp.~13--28.

\bibitem{Lax1954}
Peter~D. Lax, \emph{Weak solutions of nonlinear hyperbolic equations and their
  numerical computation}, Communications on Pure and Applied Mathematics
  \textbf{7} (1954), no.~1, 159--193.

\bibitem{LeBlanc2010}
Francis LeBlanc, \emph{An introduction to stellar astrophysics}, Wiley, 2010.

\bibitem{Lindblad2003a}
Hans Lindblad, \emph{Well-posedness for the linearized motion of a compressible
  liquid with free surface boundary}, Communications in Mathematical Physics
  \textbf{236} (2003), no.~2, 281--310. \MR{1981993}

\bibitem{Lindblad2005a}
\bysame, \emph{Well posedness for the motion of a compressible liquid with free
  surface boundary}, Communications in Mathematical Physics \textbf{260}
  (2005), no.~2, 319--392. \MR{2177323}

\bibitem{Lindblad2005}
\bysame, \emph{Well-posedness for the motion of an incompressible liquid with
  free surface boundary}, Annals of mathematics (2005), 109--194.

\bibitem{Liu2021d}
Chao Liu, \emph{Local well-posedness of {Euler-Poisson} dominated molecular
  clouds in astrophysics}, arXiv:2111.11708 (2021).

\bibitem{Liu2018b}
Chao Liu and Todd~A. Oliynyk, \emph{Cosmological newtonian limits on large
  spacetime scales}, Communications in Mathematical Physics \textbf{364}
  (2018), no.~3, 1195--1304.

\bibitem{Liu2018}
\bysame, \emph{Newtonian limits of isolated cosmological systems on long time
  scales}, Annales Henri Poincar{\'{e}} \textbf{19} (2018), no.~7, 2157--2243.

\bibitem{Liu2022}
Chao Liu and Yiqing Shi, \emph{A rigorous proof of slightly nonlinear {Jeans}
  instability in the expanding {Newtonian} universe}, arXiv:2201.01199 (2022).

\bibitem{Liu2018a}
Chao Liu and Changhua Wei, \emph{Future stability of the {FLRW} spacetime for a
  large class of perfect fluids}, Annales Henri Poincar{\'{e}} (2021).

\bibitem{Liu1996}
Tai-Ping Liu, \emph{Compressible flow with damping and vacuum}, Japan Journal
  of Industrial and Applied Mathematics \textbf{13} (1996), no.~1, 25.

\bibitem{Liu1997}
Tai-Ping Liu and Tong Yang, \emph{Compressible euler equations with vacuum},
  Journal of differential equations \textbf{140} (1997), no.~2, 223--237.

\bibitem{Liu2000}
\bysame, \emph{Compressible flow with vacuum and physical singularity}, Methods
  and Applications of Analysis \textbf{7} (2000), no.~3, 495--509, Cathleen
  Morawetz: a great mathematician. \MR{1869301}

\bibitem{Luo2014}
Tao Luo, Zhouping Xin, and Huihui Zeng, \emph{Well-posedness for the motion of
  physical vacuum of the three-dimensional compressible {E}uler equations with
  or without self-gravitation}, Archive for Rational Mechanics and Analysis
  \textbf{213} (2014), no.~3, 763--831. \MR{3218831}

\bibitem{Majda2012}
Andrew Majda, \emph{Compressible fluid flow and systems of conservation laws in
  several space variables}, Applied Mathematical Sciences, Springer New York,
  2012.

\bibitem{Makino1986}
Tetu Makino, \emph{On a local existence theorem for the evolution equation of
  gaseous stars}, Patterns and waves, Stud. Math. Appl., vol.~18,
  North-Holland, Amsterdam, 1986, pp.~459--479. \MR{882389}

\bibitem{Makino1992}
\bysame, \emph{Blowing up solutions of the {E}uler-{P}oisson equation for the
  evolution of gaseous stars}, Proceedings of the {F}ourth {I}nternational
  {W}orkshop on {M}athematical {A}spects of {F}luid and {P}lasma {D}ynamics
  ({K}yoto, 1991), vol.~21, 1992, pp.~615--624. \MR{1194464}

\bibitem{Makino1990}
Tetu Makino and Beno{\^\i}t Perthame, \emph{{Sur les solution {\`a}
  sym{\'e}trie sph{\'e}rique de l'equation d'Euler-Poisson pour l'evolution
  d'etoiles gazeuses}}, Japan Journal of Applied Mathematics \textbf{7} (1990),
  no.~1, 165.

\bibitem{Makino1987}
Tetu Makino and Seiji Ukai, \emph{Sur l'existence des solutions locales de
  l'\'equation d'{E}uler-{P}oisson pour l'\'evolution d'\'etoiles gazeuses},
  Journal of Mathematics of Kyoto University \textbf{27} (1987), no.~3,
  387--399. \MR{910225}

\bibitem{Makino1986a}
Tetu Makino, Seiji Ukai, and Shuichi Kawashima, \emph{Sur la solution {\`{a}}
  support compact de l'equation d'euler compressible}, Japan Journal of Applied
  Mathematics \textbf{3} (1986), no.~2, 249--257.

\bibitem{Marsden2003}
Jerrold~E. Marsden and Anthony Tromba, \emph{Vector calculus}, W.H. Freeman,
  2003.

\bibitem{Oliynyk2008a}
Todd~A. Oliynyk, \emph{The {Newtonian} limit for perfect fluids},
  Communications in Mathematical Physics \textbf{276} (2007), 131--188.

\bibitem{Oliynyk2008}
\bysame, \emph{{Post-Newtonian expansions for perfect fluids}}, Communications
  in Mathematical Physics \textbf{288} (2009), 847--886.

\bibitem{Oliynyk2009}
\bysame, \emph{{The fast Newtonian limit for perfect fluids}}, Advances in
  Theoretical and Mathematical Physics \textbf{16} (2012), 359--391.

\bibitem{Oliynyk2017}
\bysame, \emph{{A priori estimates for relativistic liquid bodies}}, Bull. Sci.
  Math. \textbf{141} (2017), 105--222.

\bibitem{Pan2005}
Ronghua Pan and Joel~A. Smoller, \emph{Blowup of smooth solutions for
  relativistic euler equations}, Communications in Mathematical Physics
  \textbf{262} (2005), no.~3, 729--755.

\bibitem{Perthame1990}
Beno{\^{\i}}t Perthame, \emph{Non-existence of global solutions to
  euler-poisson equations for repulsive forces}, Japan Journal of Applied
  Mathematics \textbf{7} (1990), no.~2, 363--367.

\bibitem{Rendall1992}
Alan~D. Rendall, \emph{The initial value problem for a class of general
  relativistic fluid bodies}, Journal of Mathematical Physics \textbf{33}
  (1992), 1047--1053.

\bibitem{Rendall2002}
Alan~D. Rendall, \emph{Theorems on existence and global dynamics for the
  einstein equations}, Living Reviews in Relativity \textbf{5} (2002), no.~1.

\bibitem{Sideris1985}
Thomas~C Sideris, \emph{Formation of singularities in three-dimensional
  compressible fluids}, Communications in mathematical physics \textbf{101}
  (1985), no.~4, 475--485.

\bibitem{Tao2006}
Terence Tao, \emph{Local and global analysis of nonlinear dispersive and wave
  equations (cbms regional conference series in mathematics)}, American
  Mathematical Society, 2006.

\bibitem{Tomisaka2012}
Kohji Tomisaka, \emph{Lecture notes: Star formation},
  \url{http://th.nao.ac.jp/MEMBER/tomisaka/Lecture_Notes/StarFormation/6/index.html}.

\bibitem{Wald2010}
Robert M~. Wald, \emph{General relativity}, University of Chicago Press, 2010.

\bibitem{Ward-Thompson2011}
Derek Ward-Thompson and Anthony~P. Whitworth, \emph{An introduction to star
  formation}, Cambridge University Press, 2011.

\end{thebibliography}

\end{document}